\definecolor{string}{rgb}{0.7,0.0,0.0}
\definecolor{comment}{rgb}{0.13,0.54,0.13}
\definecolor{keyword}{rgb}{0.0,0.0,1.0}
\theoremstyle{plain}
\newtheorem{theorem}{Theorem}[section]
\newtheorem{proposition}[theorem]{Proposition}
\newtheorem{lemma}[theorem]{Lemma}
\newtheorem{definition}[theorem]{Definition}
\theoremstyle{definition}
\newtheorem*{notation}{\sc Notations}
\newtheorem{corollary}[theorem]{Corollary}
\newtheorem{assumption}[theorem]{Assumption}
\newtheorem{remark}[theorem]{Remark}
\def\beq{\begin{equation}\displaystyle}
\def\eeq{\end{equation}}
\renewcommand\appendix{\bigskip {\noindent \Large \bf Appendix : Technical Lemmas for the diffusive limit}
  \setcounter{section}{0}%
  \setcounter{subsection}{0}%
\setcounter{equation}{0}%
\setcounter{theorem}{0}%
\def\thetheorem{A.\arabic{theorem}}
\def\theequation {A.\arabic{equation}}}
\newcommand{\dsp}{\displaystyle}
\newcommand{\calU}{\mathcal U}
\newcommand{\calL}{\mathcal L}
\newcommand{\calZ}{\mathcal Z}
\newcommand{\calE}{\mathcal E}
\newcommand{\RR}{\mathbb{R}}
\newcommand{\NN}{\mathbb{N}}
\newcommand{\intdouble}{\int\mbox{\hspace{-3mm}}\int}
\newcommand{\inttriple}{\int\mbox{\hspace{-3mm}}\int\mbox{\hspace{-3mm}}\int}
\renewcommand\theequation{\thesection.\arabic{equation}}
\title{Analysis of a diffusive effective mass model for nanowires}
\author{C. Jourdana\footnote{Istituto di Matematica Applicata 
e Technologie Informatiche, CNR, Via Ferrata 1, 27100 Pavia, Italy.
clement@imati.cnr.it} \footnote{Institut de Math\'ematiques de Toulouse, Univ Paul Sabatier, 118 Route de Narbone, 31062 Toulouse, France}  
\ and N. Vauchelet\footnote{UPMC Univ Paris 06, UMR 7598, Laboratoire Jacques-Louis Lions, F-75005, Paris, France.
vauchelet@ann.jussieu.fr}}
\date{}
\begin{document}

\maketitle

\begin{center}
{\large\it This work is dedicated to Naoufel Ben Abdallah, who was\\ a talented researcher, an enthusiastic supervisor and a generous person.}
\end{center}

\begin{abstract}
We propose in this paper to derive and analyze a self-consistent model describing the diffusive transport in a nanowire. From a physical point of view, it describes the electron transport in an ultra-scaled confined structure, taking in account the interactions of charged particles with phonons. The transport direction is assumed to be large compared to the wire section and is described by a drift-diffusion equation including effective quantities computed from a Bloch problem in the crystal lattice.
The electrostatic potential solves a Poisson equation where the particle 
density couples on each energy band a two dimensional confinement density with the monodimensional transport density given by the Boltzmann statistics.
On the one hand, we study the derivation of this Nanowire Drift-Diffusion Poisson model from a kinetic level description. On the other hand, we present an existence result for this model in a bounded domain.
\end{abstract}
{\bf Keywords.} drift-diffusion system, relative entropy method, diffusive limit, Hamiltonian's spectrum.
\\
{\bf AMS subject classifications.} Primary: 35Q40, 76R99, 49K20, 82D80; Secondary: 81Q10.

\section{Introduction}

A quantum wire is an electronic component made of a periodic ion packing. The transport direction is large compared to the wire section, which includes only few atoms. So, the assumption of infinite periodic structure in the wire cross section, which allows to derive the usual effective mass theorem \cite{ashmer,wen}, cannot be used anymore.

In \cite{nbajourpie}, a new quantum model for nanowires is derived. Using an envelope function decomposition, \cite{nbabarletti} is extended to nanowires and a longitudinal effective mass model is obtained. However, in many applications such that FETs (Field Effect Transistors) for example, semiconductor devices contains largely doped regions. In these regions, collisions play an important role in the transport. Usually, quantum models do not include collisions of charged particles. That is the reason why a diffusive model has to be developed.

In this paper, a Nanowire Drift-Diffusion Poisson NDDP model is derived following \cite{ddsp} by performing a diffusive limit from a sequence of 1d Boltzmann equations in the transport direction, one for each energy band. Similarly to \cite{nbajourpie}, this model takes into account the ultra-scaled confinement and retains information of the nanowire cross section. Moreover, a self-consistent model includes the resolution of the Poisson equation in the entire device.

\subsection{Nanowire quantities}

In order to define the effective masses and the other physical quantities used in the NDDP model, we need to recall the model derived in \cite{nbajourpie}. We consider  an infinite wire defined in a physical domain $\mathbb{R}\times\omega_{z_{a}}$, where $a$ is the typical spacing between lattice sites. The transport is described by a scaled Schr\"odinger equation in $\mathbb{R}\times\omega_{z_{a}}$ containing a potential $W_{\mathcal{L}}$ generated by the crystal lattice, fast oscillating in the scale defined by the crystal spacing, and a slowly varying potential $V$ computed self-consistently through the resolution of a Poisson equation in the whole domain. Since the 2d cross section $\omega_{z_{a}}$ comprises few ions, $W_{\mathcal{L}}$ is considered periodic only in the longitudinal $x$-direction, also called transport direction. The variable $z$ of the transverse section can be considered as fast variable, and it can be rescaled as $z'=\frac{z}{a}$. 
To simplify notations, we now omit the primes. Then, $\omega_z$ will denote the scaled cross section and we assume to work in rescaled quantities such that the periodicity is setted to $1$ in the transport direction.

The starting point is the definition of the generalized Bloch functions as the eigenfunctions of the following problem in the 3d unit cell $\mathcal U=(-1/2,1/2)\times \omega_z$ :
\begin {equation}\label{eigen_value_pb}
\left \{
\begin{array}{c @{\hspace{1.3cm}} c}
 -\frac{1}{2} \Delta \chi_{n} + W_{\mathcal{L}} \chi_{n} = E_n \chi_{n}.&\vspace{0.2cm}\\
 \chi_{n}(y,z)=0 ~~ \mbox{on }\partial\omega_z, \quad
 \chi_{n} \hbox{ 1-periodic in }y.&\vspace{0.2cm}\\
\int_{\mathcal U} |\chi_n|^2 dy dz=1.&
\end{array}
\right.
\end{equation}
We use here the notation $y$ to emphasize we consider only one period $(-1/2,1/2)$. We point out that the boundary conditions are representative of our nanowire problem. Indeed, we consider the periodicity only in the transport direction and we choose homogeneous Dirichlet conditions in other directions in order to impose confinement. 

\begin{assumption}\label{hyp_WL}
We assume that $W_{\mathcal{L}}$ is a nonnegative potential given in
$L^\infty(\calU)$.
\end{assumption}
Under Assumption \ref{hyp_WL}, verified by physically relevant potentials, it is well-known that the
eigenfunctions $\chi_n$, solutions of (\ref{eigen_value_pb}), form an orthonormal basis of $L^2(\mathcal U)$ (see e.g. \cite{henrot}), with real eigenvalues 
which satisfies
\begin{equation}
E_1 \leq E_2 \leq ...,\hspace{1cm} \lim_{n\rightarrow +\infty} E_n = +\infty.
\end{equation}

\begin{assumption}\label{simple_E}
We assume that the eigenvalues $E_n$ are all simple.
\end{assumption}
Once we compute the $\chi_n$'s, we can define $m_n^*$ which corresponds to the $n^{th}$ band effective mass and which is given by
\begin{equation}\label{effective_mass}
\frac{1}{m_n^*}=1-2\sum_{n'\neq n} \frac{P_{nn'}P_{n'n}}{E_n-E_{n'}},
\end{equation}
where
\begin{equation}\label{p_nn}
P_{nn'}=\int_{\mathcal{U}}\partial_y \chi_{n'} (y,z) \chi_n(y,z)\, dy dz
\end{equation}
are the matrix elements of the gradient operator between Bloch waves. We also define the quantities
\begin{equation}\label{v_nn}
V_{nn}(t,x)=\int_{\omega_z} V(t,x,z) g_{nn}(z)\, dz
=\langle V(t,x,\cdot),g_{nn}\rangle,
\end{equation}
with
\begin{equation}\label{g_nn}
g_{nn}(z)=\int_{-1/2}^{1/2} \chi_n^{2}(y,z)\,dy,
\end{equation}
where we make use of the notation 
$\langle f,g\rangle = \int_{\omega_z}f(z)g(z)\,dz$. We notice that $g_{nn}$'s are quantities that contain information of the cross section and they allow to link the one dimensional transport direction to the entire nanowire. They play an important role not only in \cite{nbajourpie} but also in the NDDP model presented in this paper.

Finally, the fully quantum longitudinal effective mass model obtained in \cite{nbajourpie} consists of a sequence of one dimensional device dependent Schr\"odinger equations, one for each band :
$$
\imath \partial_t \psi_n (t,x)= -\frac{1}{2m_n^*} \partial_{xx} \psi_n(t,x) + V_{nn}(t,x) \psi_n(t,x).
$$
The physical parameters, $m_n^*$ and $g_{nn}$, computed once for a given
device by solving the generalized Bloch problem (\ref{eigen_value_pb}), are integrated in transport equations. Next, self-consistent computations include the resolution of the Poisson equation.

Assumption \ref{simple_E} is restrictive. In \cite{nbajourpie}, a more plausible assumption is discussed, related to symmetry properties of the crystal.
Then, to each multiple eigenvalue corresponds a system of coupled Schr\"odinger equations with dimension equal to the multiplicity of the eigenvalue. The kinetic part of the effective mass Hamiltonian is diagonal and the coupling occurs through the potential. Nevertheless, the derivation of a diffusive model in this case is far from the scope of this paper and is not discussed here.

%


\subsection{Diffusive transport description}

In this paper, we mainly consider a finite wire in the transport direction $x$
defined in the bounded domain $\omega_x=[0,L]$ such that $L>>a$. We denote $\Omega=[0,L]\times\omega_z$ this bounded device.
Since $L$ is large compared to $a$, the crystal lattice can be assumed periodic only in the transport direction as presented in the above subsection. Moreover, we will consider that the evolution
of charged particles is mainly driven by collisions with phonons which 
represent lattice vibrations. A widely used model to describe such kind
of transport in various area such as plasmas or semiconductors is the 
drift-diffusion equation. It consists in a conservation equation 
of the particle density in the transport direction which is called 
here the surface density $N_s(t,x)$ and which corresponds to the integral 
in the direction $z$ of the total density. 
The current is the sum of a drift term and of a diffusion term 
\cite{jungel,mrs,mock}.
The equation reads
\begin{equation}\label{driduf}
\partial_t N_s -\partial_x\Big(D(\partial_x N_s + N_s \partial_x V_s) \Big)=0,
\end{equation}
where D is a diffusion coefficient and $V_s(t,x)$ is the effective potential.
This potential is self-consistant and takes into account the quantum 
confinement in the nanowire. Its derivation will be specified in 
Section \ref{derivation} ; in particular, we will show that its 
expression is given by
\begin{equation}\label{eq_vs}
V_s=-\log \mathcal{Z}\hspace{0.5cm}\hbox{with}\hspace{0.5cm} \mathcal{Z}=\sum_{n=1}^{+\infty}e^{-\big(E_n+V_{nn}\big)},
\end{equation}
where $E_n$ are the eigenvalues of the problem (\ref{eigen_value_pb}) 
and $V_{nn}(t,x)$ are the potential energies defined by (\ref{v_nn}).
It is also usual to introduce the Fermi level $E_F(t,x)$ and the 
Slotboom variable $u(t,x)$ defined by
\begin{equation}\label{fermilevel}
u=e^{E_F}=N_S e^{V_S}=\frac{N_s}{\mathcal{Z}}.
\end{equation}
Then, the current $J(t,x)$ can be expressed as
$$
J=D(\partial_x N_s + N_s \partial_x V_s)=D N_s \partial_x E_F = D e^{-V_s} \partial_x u.
$$

The electrostatic potential $V(t,x,z)$ is solution of the Poisson equation 
\begin{equation}\label{pois}
-\Delta_{x,z} V = \rho. 
\end{equation}
The three dimensional macroscopic charge density $\rho(t,x,z)$ takes into account the 
contribution of all energy bands ; it is defined as follows
\begin{equation}
\rho=\sum_{n=1}^{+\infty} N_n g_{nn},
\end{equation}
where $g_{nn}(z)$ is given by (\ref{g_nn}) and $N_n(t,x)$ is the 
charge density in the transport direction which is expressed in the 
approximation of Boltzmann statistics by
\begin{equation}\label{density}
N_n=e^{E_F-\big(E_n+V_{nn}\big)}=\frac{N_s}{\mathcal{Z}}e^{-\big(E_n+V_{nn}\big)}.
\end{equation}
We point out that the link between the one dimensional densities $N_n$ and the charge density $\rho$ is done using the $g_{nn}$'s, as it is justified in \cite{nbajourpie}. 
We complete this system with the following boundary conditions
\begin{eqnarray}
\label{bdy1}
N_s(t,x)=N_b \hspace{1cm} &\hbox{for }& x\in \partial\omega_x,\\ 
\label{bdy2}
V(t,x,z)=V_b(z) \hspace{1cm} &\hbox{for }& x\in \partial\omega_x,\\
\label{bdy3}
\partial_z V(t,x,z)=0 \hspace{1cm} &\hbox{for }& z\in \partial\omega_z.
\end{eqnarray}
These boundary conditions do not correspond to the mixed type boundary conditions necessary for physical applications (taking in account source and drain contacts, gate...). It is chosen for the mathematical convenience and in particular for the elliptic regularity properties of the Poisson equation \eqref{pois} on our domain.

To simplify notations, we define the functional $\mathcal{S}[V](t,x,z)$ such that
\begin{equation}\label{def_S}
\mathcal{S}[V]= \sum_{n=1}^{+\infty} \frac{e^{-\big(E_n+V_{nn}\big)}}{\mathcal{Z}} g_{nn}.
\end{equation}
With this notation, we have
\begin{equation}
\rho= N_s \mathcal{S}[V].
\end{equation}

\subsection{Main results}

In \cite{ddsp,heitzring,ddsplog}, the authors propose transport models for
confined structures using the subband description which allows to reduce
the 3d problem to a 2d transport equation.
The transport coefficients have then to be computed by solving eigenvalue
problems for the steady-state Schr\"odinger equation in the confinement
direction, which is therefore one dimensional.
Compared to \cite{ddsp,ddsplog}, the NDDP model presented in this paper,
involved the resolution of the Bloch problem in all directions and the 
confinement is two-dimensional.

The main results in this paper concern the 
coupled Nanowire Drift-Diffusion Poisson NDDP system
\eqref{driduf}--\eqref{bdy3} and are divided into two parts.
In a first part we study the derivation of the model \eqref{driduf}--\eqref{density} in a wire with infinite extension
from a kinetic level description. This latter model describes 
the interaction of charged particles with phonons at thermal equilibrium.
Assuming that the potential $V$ is given, we are able to state
in Theorem \ref{th_diffusive_limit} the convergence of this model 
towards the NDDP model.

In a second part, we focus on the study of the NDDP model in the bounded device $\Omega=[0,L]\times\omega_z$.
We will make the following assumptions~:
\begin{assumption}\label{assumption_D}
The function $D$ is assumed to be a $\mathcal{C}^1$ function on $\overline{\Omega}$ and there exists two nonnegative constants $D_1$ and $D_2$ such that $0<D_1 \leq D\leq D_2$.
\end{assumption}
\begin{assumption}\label{assumption_init_N}
The initial condition satisfies $N_s^0 \log N_s^0 \in L^1(\omega_x)$ and $N_s^0 \geq 0$ a.e. And we denote $\mathcal{N}_I = \int_{\omega_x} N_s^0 dx$.
\end{assumption}
\begin{assumption}\label{assumption_bord}
The boundary data for the surface density $N_b$ is a positive constant. The Dirichlet boundary condition for the potential satisfies $V_b\in C^2(\partial \omega_x\times \omega_z)$ and the compatibility condition $\partial_z V_b (z)=0$ for all $z\in\partial \omega_z$.
\end{assumption}

\noindent The main result is the following existence theorem :
\begin{theorem}\label{main_theorem}
Let $T>0$. Under Assumptions \ref{assumption_D}, \ref{assumption_init_N} 
and \ref{assumption_bord}, 
the Nanowire Drift-Diffusion Poisson system \eqref{driduf}--\eqref{bdy3} 
admits a weak solution such that
\begin{equation*}
N_s\log N_s\in L^{\infty}([0,T];L^1(\omega_x))\hspace{0.5cm}\mbox{ and }
\hspace{0.5cm}\sqrt{N_s}\in L^2([0,T];H^1(\omega_x)),
\end{equation*}
\begin{equation*}
V\in L^{\infty}([0,T];H^1(\Omega)).
\end{equation*}
\end{theorem}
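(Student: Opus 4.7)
The plan is to build a weak solution by Schauder's fixed point argument on a truncated version of the system, then remove the truncation using the natural entropy bound of drift-diffusion theory.

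After lifting the boundary data $N_b$ and $V_b$ (with Assumption \ref{assumption_bord} providing an $H^2$ extension of $V_b$ compatible with the Neumann condition on $\partial\omega_z$), I would fix a truncation level $M>0$ and define a map $\mathcal{T}_M : N_s \mapsto N_s^{\rm new}$ in three moves. Given $N_s\geq 0$ in a closed bounded convex subset of $C([0,T];L^2(\omega_x))$, first solve the Poisson problem (\ref{pois})--(\ref{bdy3}) with right-hand side $\rho = T_M(N_s)\,\mathcal{S}[V]$; this nonlinear sub-problem is itself solved by a contraction in $L^\infty(\Omega)$, because each term $e^{-(E_n+V_{nn})}/\mathcal{Z}$ lies in $[0,1]$ and the series defining $\mathcal{Z}$ and $\mathcal{S}[V]$ are uniformly summable thanks to $E_n\to +\infty$ (Assumption \ref{hyp_WL}) combined with an $L^\infty$ bound on $V_{nn}$ through (\ref{v_nn}). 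Once $V$, hence $V_s=-\log\mathcal{Z}\in L^\infty$, is obtained, solve the \emph{linear} parabolic equation (\ref{driduf}) with drift $D\partial_x V_s$ and Dirichlet data $N_b$; classical parabolic theory gives $N_s^{\rm new}$, and the maximum principle yields $N_s^{\rm new}\geq 0$. A standard $L^2$-$H^1$ estimate combined with Aubin--Lions makes $\mathcal{T}_M$ compact and continuous on an invariant set, so Schauder delivers a fixed point.

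The key $M$-independent a priori estimate comes from the relative entropy dissipation obtained by multiplying (\ref{driduf}) by $\log(N_s/\mathcal{Z})=E_F$, yielding
\begin{equation*}
\frac{d}{dt}\int_{\omega_x}\!\!\Big(N_s\log\frac{N_s}{\mathcal{Z}}-N_s\Big)dx+\int_{\omega_x}\!\!D\,N_s\,\Big|\partial_x\log\frac{N_s}{\mathcal{Z}}\Big|^2 dx = \text{boundary terms} - \int_{\omega_x} N_s\,\partial_t\log\mathcal{Z}\,dx,
\end{equation*}
where the boundary contribution is controlled thanks to the positive constant data $N_b$ (Assumption \ref{assumption_bord}); the computation becomes cleanest in the Slotboom variable $u=N_s/\mathcal{Z}$, since $J=De^{-V_s}\partial_x u$. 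The source term involving $\partial_t \mathcal{Z}$ is estimated via the Poisson equation and the evolution equation for $\rho$. Using $D\geq D_1>0$ and the identity $N_s|\partial_x\log N_s|^2=4|\partial_x\sqrt{N_s}|^2$, this produces $\sqrt{N_s}\in L^2(0,T;H^1(\omega_x))$ together with $N_s\log N_s\in L^\infty(0,T;L^1(\omega_x))$. The Poisson equation with $\rho\in L^\infty_t L^1_{x,z}$ and the boundary lift then deliver $V\in L^\infty(0,T;H^1(\Omega))$ by elliptic regularity, which matches the conclusion.

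Finally I would let $M\to+\infty$. Integration of (\ref{driduf}) over $\omega_x$ yields a uniform $L^\infty_t L^1$ bound on $N_s$, so the truncation $T_M$ eventually becomes inactive. Aubin--Lions gives strong $L^2_{t,x}$ convergence of $N_s$, and elliptic regularity transfers this to strong convergence of $V$ in $L^2_t H^1_{x,z}$, which is enough to pass to the limit in the linear Poisson operator and in the drift term. The main obstacle I anticipate is passing to the limit in the nonlinear series functionals $\mathcal{S}[V]$ and $\mathcal{Z}$: one must commute the limit with the infinite sum of $e^{-(E_n+V_{nn})}$. I would handle this by combining a uniform $L^\infty$ bound on each $V_{nn}$ (from the Sobolev regularity of $V$ and $g_{nn}\in L^2(\omega_z)$) with the fast decay of $e^{-E_n}$ from $E_n\to+\infty$, which forces uniform smallness of the tails. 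The same subtlety arises when establishing continuity of the sub-map defining $V$ inside the Schauder step, and controlling it rigorously is the technical heart of the argument.
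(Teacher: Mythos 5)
Your overall architecture (approximate the system, fixed point, entropy estimate, compactness to remove the approximation) is the same family as the paper's, but the two steps you treat most casually are precisely where your scheme breaks. First, the entropy estimate: you propose to \emph{estimate} the term $\int_{\omega_x} N_s\,\partial_t\log\mathcal{Z}\,dx$ ``via the Poisson equation and the evolution equation for $\rho$''. There is no a priori control of $\partial_t V$ (hence of $\partial_t V_{nn}$, $\partial_t\mathcal{Z}$) at this stage, so this term cannot be absorbed by an estimate; the only workable device is the identity $\sum_n\int N_n\,\partial_t V_{nn}\,dx=\int_\Omega\rho\,\partial_t V\,dx\,dz=\tfrac12\tfrac{d}{dt}\int_\Omega|\nabla(V-\underline V)|^2$, which requires putting the electrostatic energy \emph{inside} the Lyapunov functional, as in the paper's relative entropy $W$ (Proposition \ref{prop_entropie1}). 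This identity in turn requires that the density appearing in the Poisson equation be exactly the density transported by the drift-diffusion equation. With your truncation $\rho=T_M(N_s)\mathcal{S}[V]$ the two differ, so the cancellation does not close uniformly in $M$; the paper's choice of a \emph{selfadjoint mollifier} $\mathcal{R}^\epsilon$ acting on the coupling (rather than a truncation of $N_s$) is made precisely so that the entropy identity survives the regularization. Relatedly, your claim that the nonlinear Poisson sub-problem is solved ``by a contraction in $L^\infty$'' is unsupported: the Lipschitz constant of $V\mapsto T_M(N_s)\mathcal{S}[V]$ is proportional to the size of the density (hence to $M$), and no smallness is available; the paper instead exploits the convex variational structure of the functional $J(V,N_s)$ and the monotonicity of $V\mapsto\mathcal{S}[V]$ to get existence, uniqueness and the continuity estimates (Propositions \ref{exist_poisson_nano} and \ref{cont}).

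Second, your removal of the truncation is flawed. Integrating \eqref{driduf} over $\omega_x$ does not give a mass bound, because the Dirichlet data $N_b$ produce boundary fluxes of no definite sign; in the paper the bound \eqref{mass} is a consequence of the entropy estimate itself. More importantly, even granting $N_s\in L^\infty([0,T];L^1(\omega_x))$, an $L^1$ bound does not make a pointwise truncation $T_M$ ``eventually inactive'': the solution you are constructing lives only in $L\log L$ and is never known to be bounded, so you must pass to the limit $M\to\infty$ by compactness (Aubin--Lions on $\sqrt{N_s}$-type bounds, strong $L^2_tL^1_x$ convergence of $N_s$, continuity of the Poisson map, weak convergence of $N_s\partial_x V_s$), exactly as the paper does for $\epsilon\to 0$ --- and for that you need the $M$-uniform entropy and dissipation bounds whose derivation is compromised by the first point. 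Finally, note that $V\in L^\infty_t H^1$ does not follow from generic elliptic regularity with $L^1$ data in a 3d domain; it follows either from the entropy (which contains $\|\nabla(V-\underline V)\|_{L^2}^2$) or from testing the Poisson equation and using the tensor structure $\rho=N_s\mathcal{S}[V]$ together with the anisotropic embedding $H^1(\Omega)\hookrightarrow L^2_zL^\infty_x(\Omega)$ and the spectral bounds of Lemma \ref{lemme_spec}, which is the route the paper takes.
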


To prove this result, we follow the idea proposed in \cite{ddsplog} which
relies strongly on the estimate on the relative entropy that we will prove
in Section \ref{apriori}.
The main difficulty is due to the quantum confinement for which 
we need some sharp estimates on the quantities provided by the Bloch problem. 
Such estimates are given in Section \ref{spectrum}.
Then a priori estimates give a functional framework for the quantities 
$N_s$ and $V$. Since $N_s$ will be defined only in $L\log L$, we need 
to regularized the system. For the regularized system we obtain existence
of solutions such as in \cite{ddsplog} and we recover a solution of the 
non-regularized system by passing to the limit in the regularization.

This paper is organized as follows. In Section \ref{derivation}, 
we describe the derivation of the model from a kinetic model
taking into account the interactions of the charged particles 
with phonons.
Section \ref{analysis} is devoted to the proof of Theorem 
\ref{main_theorem}. We first state estimates on the eigen-elements 
defining the Nanowire quantities. Then, we define the regularization 
of the model. We prove a priori estimates for this regularized system.
Next, the regularized Nanowire Poisson system is analyzed. Finally,
we prove Theorem \ref{main_theorem} by passing to the limit in 
the regularization.

\section{Diffusive limit}
\label{derivation}

\subsection{Kinetic description}\label{kinetic_description}
The drift-diffusion model can be derived from kinetic theory 
when the mean free path related to particle interactions 
with a thermal bath is small compared to the system length-scale
\cite{nbadegond,poupaud}.
In this section, we present the derivation of this model from
the Boltzmann equation describing collisions of charged particles
with phonons at thermal equilibrium. This equation governs the evolution of the distribution function 
$f_n(t,x,p)$ on the $n^{th}$ band whose energy is given by $E_n+V_{nn}$.
Here and in the following, we shall use the notation $f_n$ 
for a function depending on the $n^{th}$ band, and the notation 
$f=(f_n)_{n\geq1}$ when the entire set of bands is taken into account. 
The time variable $t$ is nonnegative, the position variable is denoted 
$x$ and the momentum variable $p$.
The equation writes \cite{poupaud,diff1D}
\begin{equation}\label{boltzman}
\partial_t f_{n}^{\eta} +\frac{1}{\eta} \Big(v_n \partial_xf_{n}^{\eta} -\partial_x V_{nn} \partial_p f_{n}^{\eta}\Big)=\frac{1}{\eta^2}\mathcal{Q}_B(f^{\eta})_n,
\end{equation}
where $\eta$ is the scaled mean free path, assumed to be small.
In this equation, $v_n$ is the velocity given by 
$v_n(p)=\frac{p}{m^*_n}$, $m^*_n$ the $n^{th}$ band effective mass (\ref{effective_mass}) and $V_{nn}(t,x)$ the effective potential 
energy associated to the $n^{th}$ band (\ref{v_nn}).
This equation is completed by the initial data denoted $f^0$.

The collision operator $\mathcal{Q}_B$, describing the scattering between 
electrons and phonons, is assumed in the linear BGK approximation 
for Boltzmann statistics. It reads
\begin{equation}\label{op_collision}
\mathcal{Q}_B(f)_n=\sum_{n'=1}^{+\infty}\int_{\mathbb{R}} \alpha_{n,n'}(p,p')\Big(\mathcal{M}_n(p)f_{n'}(p')-\mathcal{M}_{n'}(p')f_n(p)\Big)dp'
\end{equation}
where the function $\mathcal{M}_n$ is the Maxwellian
\begin{equation}
\mathcal{M}_n(t,x,p)=\frac{1}{\sqrt{2\pi m^*_n}\mathcal{Z}(t,x)}e^{-\big(\frac{p^2}{2m^*_n}+E_n+V_{nn}(t,x)\big)}
\end{equation}
normalized such that
\begin{equation}\label{norm}
\sum_{n=1}^{+\infty}\int_{\mathbb{R}}\mathcal{M}_ndp=1.
\end{equation}
The repartition function $\mathcal{Z}$ is thus given by
\begin{equation}\label{def_Z}
\calZ(t,x)=\sum_{n=1}^{+\infty}e^{-\big(E_n+V_{nn}(t,x)\big)}.
\end{equation}
The energies $E_n$ correspond to the eigenvalues of the problem 
(\ref{eigen_value_pb}). We notice that Assumption \ref{hyp_WL} 
allows us to give a sense to this definition of $\mathcal{Z}$ since 
$\calZ\leq \sum_n e^{-E_n} \leq \sum_n e^{-\Lambda_n}<\infty$, where
$\Lambda_n$ are the eigenvalues of the Laplacian operator (see Section \ref{spectrum}).
\begin{assumption}\label{assumption_alpha}
The cross section $\alpha$ is symmetric and bounded from above and below :
\begin{equation*}
\exists \alpha_1,\alpha_2 >0,\hbox{  }0<\alpha_1\leq \alpha_{n,n'}(p,p') \leq \alpha_2,\hbox{  } \forall n,n'\geq1,\hbox{  } \forall p\in\mathbb{R},\hbox{  } \forall p'\in\mathbb{R}.
\end{equation*}
\end{assumption}

In the diffusion approximation, boundary layers appears at the frontier 
of the domain.
Since the study of this phenomena is far from the scope of this
paper, we consider the limit in the case where the spatial domain is $\RR$, 
assuming that there is no charged carriers at infinity such that
$\lim_{x\to\pm \infty} f^\eta_n(t,x,p)=0$.
For the rigorous analysis of boundary layers in the diffusion 
approximation, we refer the reader to \cite{poupaud,masmoudi}. 


Let us recall an existence result for our problem. 
It is a direct corollary of well known existence results on the Boltzmann 
equation (see e.g. \cite{tayeb,poupaud} and references therein).
\begin{theorem}
Let us assume that the potential $V\geq 0$ is given in $L^{\infty}([0,T];H^2(\mathbb{R}\times\omega_z))$
and that the initial data satisfies 
$f^{0} \in l^1(L^{\infty}(\RR\times\mathbb{R}))$ and $f^{0}\geq0$.
For fixed $\eta > 0$, under Assumption \ref{assumption_alpha}, 
the problem (\ref{boltzman})-(\ref{def_Z}) admits a unique weak solution $f^{\eta} \in L_{loc}^{\infty}(\mathbb{R}^+, l^1(L^1(\RR\times \mathbb{R})))$ and $f^{\eta} \geq 0$.
\end{theorem}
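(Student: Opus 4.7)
The plan is to treat \eqref{boltzman} as a linear kinetic system with a given right-hand side structure, and proceed by a standard fixed-point argument as in the references cited. First I would rewrite the collision operator in gain-minus-loss form
\begin{equation*}
\mathcal{Q}_B(f)_n = \mathcal{M}_n(p)\,G_n[f](t,x) - \nu_n(t,x,p)\,f_n(t,x,p),
\end{equation*}
where $G_n[f] = \sum_{n'}\int \alpha_{n,n'}(p,p')f_{n'}(p')\,dp'$ and $\nu_n = \sum_{n'}\int \alpha_{n,n'}(p,p')\mathcal{M}_{n'}(p')\,dp'$. By Assumption \ref{assumption_alpha} and normalization \eqref{norm}, $\nu_n$ is bounded uniformly in $n,p$ by $\alpha_2$. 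The Maxwellian $\mathcal{M}_n$ is integrable in $p$ with $l^1$ summability in $n$, because $E_n\to\infty$ fast enough to make $\mathcal{Z}$ finite (as already remarked in the text, $\sum_n e^{-E_n}<\infty$ thanks to Assumption \ref{hyp_WL}).

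Next I would set up the mild formulation. Since $V\in L^\infty(0,T;H^2(\mathbb{R}\times\omega_z))$, the one-dimensional band potentials $V_{nn}(t,x)=\langle V(t,x,\cdot),g_{nn}\rangle$ are regular enough in $x$ (through Sobolev embedding and uniform bounds on $g_{nn}$ provided in Section \ref{spectrum}) that the characteristic flow $(X_n(s;t,x,p),P_n(s;t,x,p))$ of the transport operator $v_n\partial_x - \partial_x V_{nn}\partial_p$ is well defined and measure-preserving. Writing $T>0$ fixed, I would consider the Duhamel representation
\begin{equation*}
f_n^\eta(t,x,p) = e^{-\frac{t}{\eta^2}\int_0^t \nu_n\circ\Phi\,ds}f_n^0\circ\Phi_0 + \frac{1}{\eta^2}\int_0^t e^{-\frac{1}{\eta^2}\int_s^t \nu_n\circ\Phi\,d\tau}\,\mathcal{M}_n G_n[f^\eta]\circ\Phi_s\,ds,
\end{equation*}
where $\Phi_s$ denotes the characteristic flow. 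This defines a map $\mathcal{T}$ on $X_\tau:=C([0,\tau];l^1(L^1(\mathbb{R}\times\mathbb{R})))$ for $\tau\leq T$.

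Using the bound $|G_n[f]|\leq \alpha_2 \sum_{n'}\|f_{n'}(t,x,\cdot)\|_{L^1(dp)}$ and the $l^1(dp)$-integrability of $\mathcal{M}_n$, a direct computation shows that $\mathcal{T}$ sends $X_\tau$ into itself, and is a strict contraction for $\tau$ small enough (independent of the particular center of the ball). The Banach fixed-point theorem then yields a unique mild solution on $[0,\tau]$, and since the estimate is linear the process iterates to any finite $T$, giving $f^\eta\in L^\infty_{\mathrm{loc}}(\mathbb{R}^+;l^1(L^1))$. Non-negativity follows from the gain/loss splitting: if $f^0\geq 0$ then the iteration sequence $f^{(k+1)}=\mathcal{T}f^{(k)}$ starting from $f^{(0)}=0$ remains non-negative (the exponential factor, $\mathcal{M}_n$, $G_n[\,\cdot\,]$ and $f^0$ are all non-negative), and this is preserved in the limit. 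Uniqueness of mild then of weak solutions is inherited from the contraction estimate together with Gronwall on intervals $[0,T]$.

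The step where one must be careful is the control of the infinite band summation: one needs the series $\sum_n \|\mathcal{M}_n\|_{L^1(dp)}$ and $\sum_n \|V_{nn}\|_{L^\infty}$ to be finite uniformly, which rests on the spectral decay of $E_n$ and on pointwise estimates for $g_{nn}$ (Assumption \ref{hyp_WL} plus the bounds provided in Section \ref{spectrum}). Once these uniform band-wise bounds are in hand, the rest is a standard linear BGK existence argument and the result follows as a corollary of \cite{tayeb,poupaud}.
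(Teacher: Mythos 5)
Your proposal is essentially correct, but note that the paper itself does not prove this statement at all: it is presented as a direct corollary of known results on the linear (BGK-type) Boltzmann equation, with a pointer to \cite{tayeb,poupaud}. What you have written is a reconstruction of the standard argument underlying those references --- gain/loss splitting of $\mathcal{Q}_B$, Duhamel formula along characteristics, short-time contraction in $C([0,\tau];l^1(L^1))$, iteration to arbitrary $T$, nonnegativity by monotone iteration from zero, uniqueness by Gronwall --- so the route is the expected one rather than a genuinely different strategy. Two points deserve more care. First, the characteristics: $V\in L^{\infty}([0,T];H^2(\mathbb{R}\times\omega_z))$ only yields $\partial_x V_{nn}\in L^\infty_t H^1_x$, hence a force field that is H\"older continuous (not Lipschitz) in $x$, so the flow $\Phi_s$ is not classically unique and the mild formulation as written is not immediately justified; the standard remedy is to regularize the potential, obtain solutions with uniform $l^1(L^1)$ bounds, pass to the limit, and prove uniqueness of weak solutions directly at the PDE level by an $L^1$--Gronwall estimate on the difference of two solutions (or by invoking DiPerna--Lions well-posedness for the transport part), rather than through Cauchy--Lipschitz. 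Second, your ``careful step'' about band summation asks for more than is needed: $\sum_n\|\mathcal{M}_n\|_{L^1(dp)}=1$ holds exactly by the normalization (\ref{norm}), and no summability of $\|V_{nn}\|_{L^\infty}$ over $n$ is required (it would in general fail, and each band only sees its own $V_{nn}$); the inter-band coupling is controlled solely through $|G_n[f]|\le\alpha_2\|f(t,x,\cdot)\|_{l^1(L^1_p)}$. With these adjustments your sketch amounts to a complete proof of the statement that the paper delegates to the literature.
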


\subsection{Properties of the collision operator}

We present some well known properties of the collision operator $\mathcal{Q}_B$ 
defined by (\ref{op_collision}). 
In this section, the time variable $t$ and the position variable $x$ are considered as parameters, thus we omit to write the dependence on $t$ and $x$. 
We define the weighted space
\begin{equation}
L^2_{\mathcal{M}}= \{ f=(f_n)_{n\geq1} \hbox{ such that } \sum_{n=1}^{+\infty} \int_{\mathbb{R}} \frac{f_n^2}{\mathcal{M}_n} dp < +\infty \},
\end{equation}
which is a Hilbert space with the scalar product
\begin{equation}
\langle f,g\rangle _{\mathcal{M}} = 
\sum_{n=1}^{+\infty} \int_{\mathbb{R}} \frac{f_n g_n}{\mathcal{M}_n} dp.
\end{equation}
We have the following properties for $\mathcal{Q}_B$ whose 
proofs can be found in \cite{ddspnum} (section 3.1).
\begin{proposition}\label{prop_Q}
We assume that the cross section $\alpha$ satisfies Assumption \ref{assumption_alpha}. Then the following properties hold for $\mathcal{Q}_B$ :\vspace{0.3cm}\\
(i) $\sum_{n\geq1} \int_{\mathbb{R}} \mathcal{Q}_B(f)_{n} dp =0$.\vspace{0.3cm}\\
(ii) $\mathcal{Q}_B$ is a linear, selfadjoint and negative bounded operator on $L^2_{\mathcal{M}}$.\vspace{0.3cm}\\
(iii) $Ker \ \mathcal{Q}_B = \{ f\in L^2_{\mathcal{M}},\hbox{  such that }\exists N_s\in \mathbb{R}, f_n=N_s\mathcal{M}_n \}$ 
and $(Ker \ \mathcal{Q}_B)^\bot = Im \ \mathcal{Q}_B$.\vspace{0.3cm}\\
(iv) If $\mathcal{P}$ is the orthogonal projection on $Ker \ \mathcal{Q}_B$ 
with the scalar product $\langle.,.\rangle_{\mathcal{M}}$, then
 $$
 -\langle\mathcal{Q}_B(f),f\rangle_{\mathcal{M}} \geq \alpha_1 
\|f-\mathcal{P}(f)\|^2_{\mathcal{M}}.
$$
\end{proposition}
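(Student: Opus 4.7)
The plan is to derive everything from a single symmetrization identity. Writing out $\langle \mathcal{Q}_B(f), g\rangle_\mathcal{M}$ explicitly and then swapping the dummy variables $(n,p) \leftrightarrow (n',p')$ in one of the two resulting integrals, using the symmetry $\alpha_{n,n'}(p,p') = \alpha_{n',n}(p',p)$, I would obtain the bilinear identity
\begin{equation*}
\langle \mathcal{Q}_B(f), g\rangle_\mathcal{M} = -\frac{1}{2} \sum_{n,n'} \iint \alpha_{n,n'}\, \mathcal{M}_n(p)\, \mathcal{M}_{n'}(p') \left(\frac{f_{n'}(p')}{\mathcal{M}_{n'}(p')} - \frac{f_n(p)}{\mathcal{M}_n(p)}\right) \left(\frac{g_{n'}(p')}{\mathcal{M}_{n'}(p')} - \frac{g_n(p)}{\mathcal{M}_n(p)}\right) dp\, dp'.
\end{equation*}
Every property in the proposition then falls out of this single formula.

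Property (i) is immediate: choosing $g_n = \mathcal{M}_n$ kills the second bracket, so $\langle \mathcal{Q}_B(f), \mathcal{M}\rangle_\mathcal{M} = \sum_n \int \mathcal{Q}_B(f)_n\, dp = 0$. For (ii), linearity is obvious, the identity is manifestly symmetric in $(f,g)$, and its diagonal value is manifestly nonpositive, giving selfadjointness and negativity. Boundedness follows by splitting $\mathcal{Q}_B(f)_n = \mathcal{M}_n \sum_{n'}\int \alpha_{n,n'} f_{n'}\, dp' - \lambda_n(p)\, f_n$ where $\lambda_n(p) = \sum_{n'}\int \alpha_{n,n'}\mathcal{M}_{n'}\, dp' \in [\alpha_1,\alpha_2]$ by Assumption \ref{assumption_alpha} and the normalisation \eqref{norm}, and applying Cauchy--Schwarz to the gain term in the weighted norm.

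For the kernel characterization in (iii), if $f \in \mathrm{Ker}\,\mathcal{Q}_B$ then $\langle \mathcal{Q}_B(f), f\rangle_\mathcal{M} = 0$, so by the identity and the lower bound $\alpha_{n,n'} \geq \alpha_1 > 0$ the integrand must vanish a.e., forcing $f_n(p)/\mathcal{M}_n(p)$ to be independent of $(n,p)$; that is, $f_n = N_s \mathcal{M}_n$ for some real $N_s$, and the converse is obvious. The identity $(\mathrm{Ker}\,\mathcal{Q}_B)^\perp = \mathrm{Im}\,\mathcal{Q}_B$ follows from the standard fact that a bounded selfadjoint operator satisfies $(\mathrm{Ker})^\perp = \overline{\mathrm{Im}}$, together with the coercivity provided by (iv), which forces $\mathcal{Q}_B$ restricted to $(\mathrm{Ker})^\perp$ to have closed range; this dependency is why I would prove (iv) before closing (iii).

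The spectral gap (iv) is a direct calculation. Orthogonally decompose $f = \mathcal{P}(f) + (f - \mathcal{P}(f))$ with $\mathcal{P}(f)_n = N_s \mathcal{M}_n$ and $N_s = \sum_n \int f_n\, dp$ (the value making $f - \mathcal{P}(f)$ orthogonal to the kernel), which gives $\|\mathcal{P}(f)\|^2_\mathcal{M} = N_s^2$ thanks to \eqref{norm}. Setting $g = f$ in the symmetrization identity, using $\alpha \geq \alpha_1$, and expanding the square yields two diagonal contributions each equal to $\|f\|^2_\mathcal{M}$ (after Fubini and $\sum_n \int \mathcal{M}_n = 1$) and a cross term $-2 N_s^2$; the total $2(\|f\|^2_\mathcal{M} - N_s^2) = 2\|f - \mathcal{P}(f)\|^2_\mathcal{M}$ is absorbed by the prefactor $\tfrac{1}{2}$, producing exactly $-\langle \mathcal{Q}_B(f), f\rangle_\mathcal{M} \geq \alpha_1 \|f - \mathcal{P}(f)\|^2_\mathcal{M}$. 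The only genuinely delicate step is the closed-range portion of (iii), which is the reason for ordering the proofs as (i), (ii), (iv), then (iii).
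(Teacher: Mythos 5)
Your proof is correct, and it follows what is essentially the standard argument: the paper itself does not prove Proposition \ref{prop_Q} but refers to \cite{ddspnum} (Section 3.1), where the proof rests on exactly the symmetrization identity you derive, with (i)--(ii) read off from it, the coercivity (iv) obtained by expanding the square and using the normalisation \eqref{norm}, and the closed-range step for $(\mathrm{Ker}\,\mathcal{Q}_B)^\perp=\mathrm{Im}\,\mathcal{Q}_B$ handled via the lower bound on $(\mathrm{Ker}\,\mathcal{Q}_B)^\perp$ just as you do. Your ordering (i), (ii), (iv), (iii) and the explicit verification that $\mathcal{P}(f)_n=N_s\mathcal{M}_n$ with $N_s=\sum_n\int f_n\,dp$ are sound, so no gaps to report.
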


The third point of Proposition \ref{prop_Q} implies that the equation
$\mathcal{Q}_B(f)=h$ admits a solution in $L^2_{\mathcal{M}}$ iff 
$h\in (Ker \ \mathcal{Q}_B)^{\perp}$. 
Moreover, this solution is unique if we impose $f\in(Ker \ \mathcal{Q}_B)^{\perp}$ where
$(Ker \ \mathcal{Q}_B)^{\perp}=\{f \hbox{  such that  } \sum_{n=1}^{+\infty}\int_{\mathbb{R}} f_n dp =0 \}$.
As a consequence, we can define~:
\begin{definition}\label{consequence_Q}
There exists $\Theta \in L^2_{\mathcal{M}}$ such that for all $n\geq1$,
\begin{equation}\label{def_theta}
\mathcal{Q}_B(\Theta)_n = -\frac{p}{m^*_n}\mathcal{M}_n \hspace{0.5cm}\mbox{ and }\hspace{0.5cm} \sum_{n=1}^{+\infty} \int_{\mathbb{R}} \Theta_n dp =0.
\end{equation}
We define the nonnegative diffusion coefficient by
\begin{equation}\label{definition_D}
D=\sum_{n=1}^{+\infty} \int_{\mathbb{R}} \frac{p}{m^*_n} \Theta_n  dp.
\end{equation}
\end{definition}

\begin{remark} {\sc Particular case when $\alpha$ is constant.}\\
Let us assume that for all $n,n',k,k'$, $\alpha(n,n',k,k')=1/\tau$, where $\tau$ is a relaxation time. After calculations,
\begin{equation*}
\Theta_n(p)=\tau \frac{p}{m^*_n} \mathcal{M}_n(p) \hspace{1cm}\hbox{and}\hspace{1cm}
D=\tau\sum_{n=1}^{+\infty} \frac{e^{-(E_n+V_{nn})}}{m_n^* \sum_{m=1}^{+\infty} e^{-(E_m+V_{mm})}}.
\end{equation*}
We emphasize that this expression is slightly different to the formula used in \cite{ddspnum} for example where $D$ is defined as $\tau/m$.
\end{remark}

\subsection{Asymptotic expansion for the diffusive limit}

Let us consider a solution $f_n^\eta$ of the Boltzmann equation
(\ref{boltzman}) and assume that it admits a Hilbert expansion
$$
f_n^{\eta}=f_{0,n}+\eta f_{1,n}+\eta^2f_{2,n}+...
$$
Inserting this decomposition in (\ref{boltzman}) and identifying with respect to powers of $\eta$, we obtain
\begin{equation}\label{1}
\mathcal{Q}_B(f_0)_n=0.
\end{equation}
\begin{equation}\label{2}
v_n\partial_x f_{0,n}-\partial_x V_{nn} \partial_p f_{0,n}=\mathcal{Q}_B(f_1)_n.
\end{equation}
\begin{equation}\label{3}
\partial_t f_{0,n} +v_n\partial_xf_{1,n}-\partial_x V_{nn} \partial_p f_{1,n}=\mathcal{Q}_B(f_2)_n.
\end{equation}

With (\ref{1}), we get $f_{0}\in Ker \ \mathcal{Q}_B$. Thus, (iii) of Proposition \ref{prop_Q} gives 
\begin{equation}\label{expression_f0}
f_{0,n}=N_s\mathcal{M}_n.
\end{equation}
Injecting this expression in (\ref{2}) it follows, after calculations
$$
\mathcal{Q}_B(f_1)_n=H_n:=v_n\mathcal{M}_n\Big(\partial_xN_s+N_s\partial_xV_s\Big),
$$
where $V_s=-\ln\mathcal{Z}$.
By Proposition \ref{prop_Q}, $f_{1}$ exists iff $H \in (Ker \ \mathcal{Q}_B)^\perp$. Because we have $\int v_n\mathcal{M}_n dp=0$ ($v_n\mathcal{M}_n$ is an odd function), this condition is true. We choose $\Theta$ as proposed in Definition \ref{consequence_Q}. Thus,
\begin{equation}\label{expression_f1}
f_{1,n}=-\Theta_n\Big(\partial_xN_s+N_s\partial_xV_s\Big).
\end{equation}

By Proposition \ref{prop_Q}, (\ref{3}) has a solution iff, 
\begin{equation*}
\sum_{n=1}^{+\infty}\int_{\mathbb{R}}\Big(\partial_t f_{0,n} +v_n\partial_xf_{1,n}-\partial_x V_{nn} \partial_p f_{1,n}\Big)dp=0.
\end{equation*}
Using \eqref{expression_f0}, \eqref{expression_f1} and (\ref{definition_D}) 
we have formally obtained the drift-diffusion equation \eqref{driduf}.

\subsection{A convergence proof of the derivation}

In this Section, we investigate the rigorous diffusive limit 
of the Boltzmann equation (\ref{boltzman}) as $\eta \rightarrow 0$.
This study is proposed in the simplified case where the potential
$V$ is given and regular.
The diffusive limit of a coupled Boltzmann transport equation with
the Poisson equation is studied in \cite{masmoudi} and with quantum 
confinement in \cite{diff1D}.
The main result is the following theorem~:

\begin{theorem}\label{th_diffusive_limit}
Let us assume that the potential $V\geq 0$ is given in 
$L^{\infty}([0,T];H^2(\mathbb{R}\times\omega_z))$ and $\partial_tV$ is bounded in 
$L^\infty([0,T]\times\mathbb{R}\times\omega_z)$
and that the initial data satisfies
$f^{0} \in l^1(L^{\infty}(\mathbb{R}^2))$ and $f^{0}\geq0$. 
Moreover, let $T>0$ and let $(f_n^{\eta})_{n\geq1}$ be a solution 
of the Boltzmann equation (\ref{boltzman})-(\ref{def_Z}). 
Then, under Assumption \ref{assumption_alpha}, $N_s^{\eta}$ 
defined by $N_s^{\eta} := \sum_{n\geq1} \int_{\mathbb{R}} f_n^{\eta} dp$ converges weakly towards $N_s \in L^2([0,T]\times\RR)$
solution of the drift-diffusion equation :
\begin{equation*}
\partial_t N_s - \partial_x\Big( D(\partial_x N_s + N_s \partial_x V_s) \Big) =0,
\end{equation*}
where $V_s(t,x)=-\ln\Big(\sum_n e^{-\big(E_n+V_{nn}(t,x)\big)}\Big)$, with the initial data
$N_s^{0}(x)=\sum_n \int_{\mathbb{R}} f_n^{0}(x,p)\,dp$.
\end{theorem}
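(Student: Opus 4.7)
The plan is to rigorously justify the formal Hilbert expansion given in the previous subsection. The three ingredients are: (i) uniform-in-$\eta$ entropy-dissipation estimates based on Proposition~\ref{prop_Q}(iv), (ii) identification of weak limits via the projection on $\mathrm{Ker}\,\mathcal{Q}_B$, and (iii) the use of the auxiliary function $\Theta$ from Definition~\ref{consequence_Q} to pass to the limit in the current.

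First, I would multiply \eqref{boltzman} by $f^\eta_n/\mathcal{M}_n$, sum in $n$ and integrate in $x$ and $p$. Writing the transport operator in conservative form, its contribution reduces to a boundary term at infinity (vanishing by the decay of $f^\eta$) plus a remainder coming from the action of $\partial_{t,x}(1/\mathcal{M}_n)$, which is controlled by the assumptions that $V\in L^\infty_t H^2_{x,z}$ and $\partial_t V\in L^\infty$. Combined with Proposition~\ref{prop_Q}(iv) this yields
\begin{equation*}
\sup_{t\in[0,T]}\|f^\eta(t)\|_{L^2_\mathcal{M}}^2 + \frac{1}{\eta^2}\int_0^T \|f^\eta-\mathcal{P}(f^\eta)\|_{L^2_\mathcal{M}}^2\,dt \leq C,
\end{equation*}
where $\mathcal{P}(f^\eta)_n = N_s^\eta\mathcal{M}_n$ thanks to the normalization \eqref{norm}. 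Hence $N_s^\eta$ is bounded in $L^2$ and $r^\eta := (f^\eta-\mathcal{P}(f^\eta))/\eta$ in $L^2_t(L^2_\mathcal{M})$. Extracting subsequences, $N_s^\eta\rightharpoonup N_s$ weakly and, because $\mathcal{M}$ is a fixed smooth weight depending only on the data, $f^\eta\rightharpoonup N_s\mathcal{M}$ in the same sense.

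Next, I would introduce the current $J^\eta := \frac{1}{\eta}\sum_n\int v_n f^\eta_n\,dp$ and obtain the conservation law $\partial_t N_s^\eta + \partial_x J^\eta = 0$ by summing \eqref{boltzman} in $n$, integrating in $p$ and using Proposition~\ref{prop_Q}(i). Using the identity $v_n\mathcal{M}_n=-\mathcal{Q}_B(\Theta)_n$ from \eqref{def_theta} and the selfadjointness of $\mathcal{Q}_B$,
\begin{equation*}
J^\eta = -\frac{1}{\eta}\langle f^\eta,\mathcal{Q}_B(\Theta)\rangle_\mathcal{M} = -\frac{1}{\eta}\langle \mathcal{Q}_B(f^\eta),\Theta\rangle_\mathcal{M},
\end{equation*}
and substituting $\mathcal{Q}_B(f^\eta)/\eta$ via \eqref{boltzman} yields
\begin{equation*}
J^\eta = -\eta\,\langle\partial_t f^\eta,\Theta\rangle_\mathcal{M} - \sum_{n}\int_\mathbb{R}\!\big(v_n\partial_x f^\eta_n - \partial_x V_{nn}\partial_p f^\eta_n\big)\frac{\Theta_n}{\mathcal{M}_n}\,dp.
\end{equation*}
The first summand vanishes in $\mathcal{D}'$ as $\eta\to 0$. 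In the second, replacing $f^\eta$ by its weak limit $N_s\mathcal{M}$, integrating by parts in $p$, and using $\partial_x\ln\mathcal{M}_n = \partial_x V_s - \partial_x V_{nn}$ together with the definition \eqref{definition_D} of $D$ reproduces the formal computation of \eqref{expression_f1} and gives the limiting Fick law $J=-D(\partial_x N_s + N_s\partial_x V_s)$. Passing to the limit in the conservation law then yields the drift-diffusion equation; the initial condition $N_s|_{t=0}=N_s^0$ follows from weak-$\ast$ continuity in time of $N_s^\eta$, and the full sequence converges by uniqueness for the limit parabolic equation.

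The main obstacle is the uniform control of the infinite sum over bands when interchanging limit and series. This requires tail bounds of the form $\sum_{n\geq N}e^{-E_n}\to 0$ and corresponding control of $\Theta_n/\mathcal{M}_n$ in $L^2_\mathcal{M}$, which rely on the spectral comparison $E_n\geq \Lambda_n$ with the Dirichlet Laplacian eigenvalues (noted after \eqref{def_Z} and developed in Section~\ref{spectrum}), as well as on the $H^2$-regularity of $V$ which transfers to the coefficients $V_{nn}$ uniformly in $n$ once combined with the $g_{nn}$-bounds of Section~\ref{spectrum}. The other technical point — compactness in the products involving $\partial_x V_{nn}$ and $\partial_x\mathcal{M}_n$ — is standard weak-strong convergence given the regularity assumed on $V$.
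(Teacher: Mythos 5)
Your overall strategy (weighted $L^2$ estimate plus dissipation bound from Proposition \ref{prop_Q}(iv), identification of the limit through the projection on $\mathrm{Ker}\,\mathcal{Q}_B$, and duality with $\Theta$ and test functions of the form $\phi\,\Theta_n/\mathcal{M}_n$ to pass to the limit in $J^\eta$) is exactly the paper's, and the second half of your argument matches the paper's proof, including the need for the growth bounds on $\Theta_n/\mathcal{M}_n$ and its derivatives that the paper establishes in the Appendix. However, there is a genuine gap in the first step, and it is precisely the point the paper takes care of. You multiply the Boltzmann equation by $f_n^\eta/\mathcal{M}_n$, i.e. you use the \emph{normalized} Maxwellian $\mathcal{M}_n$ (containing the factor $1/\mathcal{Z}(t,x)$) as weight, and claim the remainder coming from $\partial_{t,x}(1/\mathcal{M}_n)$ is "controlled by the assumptions" on $V$. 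It is not: since $v_n\partial_x\mathcal{M}_n-\partial_x V_{nn}\partial_p\mathcal{M}_n=-v_n\mathcal{M}_n\,\partial_x\ln\mathcal{Z}\neq 0$, the transport part leaves the term
\begin{equation*}
-\frac{1}{2\eta}\sum_{n\geq 1}\intdouble \frac{(f_n^\eta)^2}{\mathcal{M}_n}\,v_n\,\partial_x\ln\mathcal{Z}\,dx\,dp,
\end{equation*}
which carries the singular factor $1/\eta$ and an unbounded velocity factor $v_n$; no amount of regularity of $V$ removes the $1/\eta$, and the entropy dissipation only controls $\eta^{-2}\|f^\eta-\mathcal{P}(f^\eta)\|^2_{L^2_{\mathcal{M}}}$ \emph{without} velocity weight, so the Gr\"onwall argument as you describe it does not close (the part of this term quadratic in the fluctuation $f^\eta-\mathcal{P}(f^\eta)$ with weight $|v_n|$ is not estimated by anything you have).

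The paper's fix is the choice of weight made in the Notations before Lemma \ref{lemma1diff}: one multiplies by $f_n^\eta/M_n$ with $M_n=\frac{1}{\sqrt{2\pi m_n^*}}e^{-\left(\frac{p^2}{2m_n^*}+E_n+V_{nn}\right)}$ (no $1/\mathcal{Z}$), which satisfies $v_n\partial_x M_n-\partial_x V_{nn}\partial_p M_n=0$ exactly. Then the $O(1/\eta)$ transport contribution vanishes identically, and the only remainder is $\sum_n\intdouble \partial_t V_{nn}\,\frac{(f_n^\eta)^2}{2M_n}\,dx\,dp$, with no $1/\eta$, controlled by $\|\partial_t V\|_{L^\infty}$; Gr\"onwall then gives the uniform bound in $X=L^\infty([0,T];L^2_{M(t)})$ and, after integration, the dissipation bound \eqref{bound_feta}. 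Since $\mathcal{Z}$ is bounded above and below (by \eqref{sumEn} and Lemma \ref{lemme_spec}), the $L^2_M$ and $L^2_{\mathcal{M}}$ norms are equivalent, so your stated conclusion is recoverable, but only through this corrected weight; as written, your energy estimate is the step that fails. Minor additional points: the paper justifies taking $\phi\,\Theta_n/\mathcal{M}_n$ as a test function via truncation (Lemmas \ref{lemme1_appendix1}--\ref{lemme2_appendix1}) rather than substituting $\mathcal{Q}_B(f^\eta)/\eta$ pointwise, and it only claims convergence up to extraction, without the uniqueness argument for the limit equation that you invoke.
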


\begin{notation}
For the proof, we introduce the function $M_n=\frac{1}{\sqrt{2\pi m_n^*}} e^{-\big(\frac{p^2}{2m^*_n} +E_n+V_{nn}\big)}$ which is such that 
\begin{equation*}
v_n \partial_x M_n - \partial_x V_{nn} \partial_p M_n=0.
\end{equation*}
Moreover, for $T>0$, we consider the Banach spaces $X=L^{\infty}([0,T];L^2_{M(t)})$ and $Y=L^2([0,T];L^2_{M^{-1}(t)})$.
\end{notation}

\begin{lemma}\label{lemma1diff}
Assume $f^{0} \in l^1(L^1_{x,p})\cap L^2_{\mathcal{M}(t=0)}$ and $V$ is given such as in Theorem \ref{th_diffusive_limit}. Then, the unique solution $f^{\eta}$ of the Boltzmann equations (\ref{boltzman}) in $L^{\infty}([0,T];l^1(L^1_{x,p}))$ is in $X$. Moreover, $f^{\eta}$ is bounded in $X$ independently of $\eta$.
\end{lemma}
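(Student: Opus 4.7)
The plan is to establish the $L^2_M$ bound via a weighted energy estimate tailored to the fact that $M_n(t,x,p)$ satisfies the characteristic equation $v_n\partial_x M_n - \partial_x V_{nn}\partial_p M_n = 0$. Choosing the weight $1/M_n$ (rather than $1/\mathcal{M}_n$) is precisely what makes the singular $O(1/\eta)$ transport contribution cancel identically. Concretely, I multiply \eqref{boltzman} by $f_n^\eta/M_n$, sum over $n$ and integrate over $(x,p)\in\mathbb{R}^2$. Writing $g_n^\eta=f_n^\eta/M_n$, the drift part becomes $M_n\bigl(v_n\partial_x((g_n^\eta)^2/2)-\partial_x V_{nn}\partial_p((g_n^\eta)^2/2)\bigr)$; integration by parts in $(x,p)$ together with the characteristic identity for $M_n$ makes this entire term vanish.

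For the time derivative, using $\partial_t M_n = -M_n\partial_t V_{nn}$ gives
\begin{equation*}
\sum_n \int\!\!\int \frac{f_n^\eta\partial_t f_n^\eta}{M_n}\,dp\,dx = \frac{1}{2}\frac{d}{dt}\|f^\eta\|_{L^2_{M(t)}}^2 - \frac{1}{2}\sum_n\int\!\!\int \frac{(f_n^\eta)^2}{M_n}\partial_t V_{nn}\,dp\,dx,
\end{equation*}
where the correction is controlled by $|\partial_t V_{nn}(t,x)|\leq \|\partial_t V\|_{L^\infty}\|g_{nn}\|_{L^1(\omega_z)} = \|\partial_t V\|_{L^\infty}$, since $\chi_n$ is $L^2(\mathcal U)$-normalised. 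For the collision term, I exploit the identity $1/M_n = 1/(\mathcal Z\mathcal M_n)$ together with the fact that $\mathcal Z$ depends only on $(t,x)$ to rewrite
\begin{equation*}
\sum_n\int\!\!\int \frac{f_n^\eta\mathcal{Q}_B(f^\eta)_n}{M_n}\,dp\,dx = \int_\mathbb{R} \frac{\langle f^\eta(t,x,\cdot),\mathcal{Q}_B(f^\eta)(t,x,\cdot)\rangle_{\mathcal M}}{\mathcal Z(t,x)}\,dx\leq 0
\end{equation*}
by Proposition \ref{prop_Q}(ii) and the positivity of $\mathcal Z$. Thus the $O(1/\eta^2)$ term has the good sign and can simply be dropped from the estimate.

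Combining these steps produces $\frac{d}{dt}\|f^\eta\|_{L^2_{M(t)}}^2 \leq \|\partial_t V\|_{L^\infty}\|f^\eta\|_{L^2_{M(t)}}^2$, and Gr\"onwall's lemma yields the $\eta$-independent bound $\|f^\eta(t)\|_{L^2_{M(t)}}^2 \leq e^{t\|\partial_t V\|_{L^\infty}}\|f^0\|_{L^2_{M(0)}}^2$. Admissibility of the initial data follows because $\mathcal Z$ is bounded above and below on $[0,T]\times\mathbb{R}$ (using Assumption \ref{hyp_WL} together with the $H^2\hookrightarrow L^\infty$ control on $V$), so the norms $L^2_{\mathcal M(0)}$ and $L^2_{M(0)}$ are equivalent. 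The main obstacle will be making the formal energy computation rigorous: the bare $L^1$-regularity of $f^\eta$ does not suffice to legitimate the integration by parts in $(x,p)$ nor the differentiation of a weighted $L^2$ norm with time-dependent weight. I plan to handle this in the standard way, by approximating $f^0$ in $L^2_{M(0)}\cap\ell^1(L^1)$ by smoother and compactly supported data, deriving the estimate on the associated (smooth, unique) Boltzmann solution, and transferring it to $f^\eta$ by uniqueness in $L^\infty([0,T];\ell^1(L^1))$ and lower semicontinuity of $\|\cdot\|_{L^2_{M(t)}}$.
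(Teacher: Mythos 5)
Your proposal is correct and follows essentially the same route as the paper: multiply the Boltzmann equation by $f_n^\eta/M_n$, use the characteristic identity $v_n\partial_x M_n-\partial_x V_{nn}\partial_p M_n=0$ to cancel the $O(1/\eta)$ transport term, control the $\partial_t V_{nn}$ correction by $\|\partial_t V\|_{L^\infty}$, drop (or keep, as the paper does) the nonpositive collision term using $M_n=\mathcal{Z}\mathcal{M}_n$ and the negativity of $\mathcal{Q}_B$ on $L^2_{\mathcal{M}}$, and conclude by Gr\"onwall, with the formal computation justified by a regularization/approximation argument as in the paper. The only cosmetic difference is that the paper regularizes and truncates the problem itself and identifies the weak$^*$ limit with $f^\eta$ via uniqueness, whereas you approximate the data and invoke lower semicontinuity, which amounts to the same justification.
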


\begin{proof}
Assuming that all the functions are regular enough, we multiply (\ref{boltzman}) by $f_n^{\eta}/M_n$ and integrate. We obtain
\begin{equation}\label{eq_bolt_int}
\frac{d}{dt} \sum_{n=1}^{+\infty} \intdouble \frac{(f_n^{\eta})^2}{2M_n}\,dxdp 
- \sum_{n=1}^{+\infty} \intdouble \partial_t V_{nn} \frac{(f^{\eta}_n)^2}{2M_n}\,dxdp
=\frac{1}{\eta^2} \sum_{n=1}^{+\infty} \intdouble \mathcal{Q}_B(f^{\eta})_n \frac{f^{\eta}_n}{M_n} \,dxdp.
\end{equation}
By assumption there exists $\mu\geq0$ such that $|\partial_t V_{nn}| \leq \mu$ on $[0,T]\times\RR$.
We define
\begin{equation}
X^{\eta}(t) =\sum_{n=1}^{+\infty} \intdouble \frac{(f_n^{\eta})^2}{2M_n}\,dxdp 
\hspace{0.5cm}\mbox{and}\hspace{0.5cm} S^{\eta}(t)=-\frac{1}{\eta^2} \sum_{n=1}^{+\infty} \intdouble \mathcal{Q}_B(f^{\eta})_n \frac{f^{\eta}_n}{M_n} dx dp.
\end{equation}
Since $\mathcal{Q}_B$ is negative, $S^{\eta}(t)\geq 0 \ \forall t\in[0,T]$. So, (\ref{eq_bolt_int}) gives
\begin{equation}\label{eq_Xeta}
\frac{d X^{\eta}}{dt} - \mu X^{\eta} \leq - S^{\eta}.
\end{equation}
Integrating this inequality allows to conclude the proof.

To justify all calculations, we regularized the problem and consider $f_{R}$ a solution of the regularized truncated problem, $f_{R}\in \mathcal{D}([0,T]\times\mathbb{R}^2)$. Thus $f_R$ satisfies (\ref{eq_bolt_int}) and
\begin{equation*}
\frac{d}{dt} \sum_{n=1}^{+\infty} \intdouble \frac{f^2_{R}}{2M_n}\,dxdp \leq \mu \sum_{n=1}^{+\infty} \intdouble \frac{f^2_{R}}{2M_n}\,dxdp.
\end{equation*}
Thus $f_R$ is bounded in $X$ independently of $R$. We can extract a subsequence converging towards a function $g\in X$ in $X$-weak$^*$. We know moreover that $f_R$ satisfies the Cauchy criterion in $L^{\infty}([0,T],l^1(L^1_{x,p}))$ as a solution of the truncated problem. Thus, $f_R$ converges strongly towards $f$ in this space. By uniqueness of the weak$^*$ limit, $g=f$ a.e.
\end{proof}

\begin{lemma}\label{lemma2diff}
Under the assumption of Lemma \ref{lemma1diff}, 
there exists $f \in X$ and $N_s\in L^2([0,T]\times\RR)$ such that, 
up to an extraction~:\\
(i) $f^{\eta} \rightharpoonup f$ in $X$-weak$^*$.\\
(ii) $N_s^{\eta} \rightharpoonup N_s$ in $L^2_{t,x}$-weak.\\
(iii) If we define the current by
\begin{equation}\label{def_current_eta}
J^{\eta}=\frac{1}{\eta} \sum_{n=1}^{+\infty} \int_{\mathbb{R}} v_n f_n^{\eta} dp,
\end{equation}
then $J^{\eta} \rightharpoonup J$ in $L^2_{t,x}$-weak. 
Moreover, $f_n=N_s\mathcal{M}_n$ a.e. for all $n$.
\end{lemma}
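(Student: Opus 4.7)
The plan is to exploit the uniform $L^\infty_t L^2_M$ bound of Lemma~\ref{lemma1diff} together with the spectral gap for $\mathcal{Q}_B$ from Proposition~\ref{prop_Q}(iv), in order to get weak compactness for all three sequences and then identify the limit through the strong convergence of $f^\eta-\mathcal{P}(f^\eta)$ to zero. For (i), since $L^2_{M(t,x)}$ is a Hilbert space fiberwise, $X$ is the dual of a separable space, so Banach--Alaoglu applied to the bound of Lemma~\ref{lemma1diff} yields a subsequence with $f^\eta\rightharpoonup f$ in $X$-weak$^*$.

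For (ii), I would estimate $N_s^\eta$ in $L^2_{t,x}$ by a Cauchy--Schwarz inequality:
\[
|N_s^\eta(t,x)|^2\le\Bigl(\sum_n\int_{\RR} M_n\,dp\Bigr)\Bigl(\sum_n\int_{\RR}\frac{(f_n^\eta)^2}{M_n}\,dp\Bigr)=\mathcal{Z}(t,x)\sum_n\int_{\RR}\frac{(f_n^\eta)^2}{M_n}\,dp.
\]
Under Assumption~\ref{hyp_WL} together with $V\geq 0$, one has $\mathcal{Z}\le\sum_n e^{-E_n}<\infty$ uniformly in $(t,x)$, and integrating in $(t,x)$ the second factor is controlled by $T\sup_t X^\eta(t)$, bounded by Lemma~\ref{lemma1diff}. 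Hence $N_s^\eta$ is bounded in $L^2_{t,x}$ and (up to a further extraction) converges weakly to some $N_s$. Testing the weak-$*$ convergence of $f^\eta$ against a generic $\phi\in C_c^\infty((0,T)\times\RR)$ (which, viewed as a sequence constant in $(n,p)$, lies in the predual of $X$ because $\sum_n\int\phi^2 M_n\,dp=\mathcal{Z}\phi^2$ is integrable) then identifies the limit as $N_s=\sum_n\int f_n\,dp$.

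For (iii), the key observation is that $\int v_n\mathcal{M}_n\,dp=0$ by oddness in $p$, so one can subtract the equilibrium part and write
\[
J^\eta=\frac{1}{\eta}\sum_n\int_{\RR} v_n\bigl(f_n^\eta-N_s^\eta\mathcal{M}_n\bigr)\,dp=\frac{1}{\eta}\sum_n\int_{\RR} v_n\bigl(f_n^\eta-\mathcal{P}(f^\eta)_n\bigr)\,dp,
\]
recognizing $\mathcal{P}(f^\eta)_n=N_s^\eta\mathcal{M}_n$ thanks to Proposition~\ref{prop_Q}(iii) and the normalization (\ref{norm}). A second Cauchy--Schwarz gives
\[
|J^\eta|^2\le\frac{1}{\eta^2}\Bigl(\sum_n\int v_n^2\mathcal{M}_n\,dp\Bigr)\|f^\eta-\mathcal{P}(f^\eta)\|_{\mathcal{M}}^2,
\]
where a Gaussian computation reduces the first factor to $\mathcal{Z}^{-1}\sum_n e^{-(E_n+V_{nn})}/m_n^*$, uniformly bounded by Assumption~\ref{hyp_WL} and the spectral estimates in Section~\ref{spectrum}. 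The dissipation factor is controlled by integrating (\ref{eq_Xeta}) from $0$ to $T$ to get $\int_0^T S^\eta\,dt\le C$, combined with Proposition~\ref{prop_Q}(iv) and the identity $M_n=\mathcal{Z}\mathcal{M}_n$, which together yield
\[
\frac{1}{\eta^2}\int_0^T\!\!\int_{\RR}\|f^\eta-\mathcal{P}(f^\eta)\|_{\mathcal{M}}^2\,dx\,dt\le C.
\]
Hence $J^\eta$ is bounded in $L^2_{t,x}$ and a weak limit $J$ exists along a further subsequence.

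The very last bound also shows $f^\eta-\mathcal{P}(f^\eta)\to 0$ strongly in $L^2_{t,x}(L^2_\mathcal{M})$; combined with the weak convergence $\mathcal{P}(f^\eta)_n=N_s^\eta\mathcal{M}_n\rightharpoonup N_s\mathcal{M}_n$ (the weight $\mathcal{M}_n$ depending only on the fixed smooth datum $V$), the passage to the limit identifies $f_n=N_s\mathcal{M}_n$ for a.e.\ $(t,x,p)$ and every $n$. The main delicacy of the argument lies in the bookkeeping of the two weights $M_n$ and $\mathcal{M}_n$ and the duality setup for the $X$-weak$^*$ convergence; all the quantitative bounds that close the Cauchy--Schwarz estimates (two-sided control of $\mathcal{Z}$, summability of $\sum_n e^{-E_n}/m_n^*$) come from Assumption~\ref{hyp_WL}, the regularity of $V$, and the spectral estimates established in Section~\ref{spectrum}.
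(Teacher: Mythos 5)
Your proposal is correct and takes essentially the same route as the paper: the bound $\int_0^T S^\eta\,dt\le C$ combined with Proposition \ref{prop_Q}(iv) (transferred to the weight $M_n=\mathcal{Z}\mathcal{M}_n$) controls $f^\eta-\mathcal{P}(f^\eta)=f^\eta-N_s^\eta\mathcal{M}_n$ at order $\eta$, Cauchy--Schwarz then bounds $J^\eta$ in $L^2_{t,x}$, and the limits are identified by the same projection/duality argument. The only minor variation is that you get the $L^2_{t,x}$ bound on $N_s^\eta$ directly from the $X$-bound via Cauchy--Schwarz using $\sum_n\int M_n\,dp=\mathcal{Z}\le\sum_n e^{-E_n}$, whereas the paper passes through the bound on $N_s^\eta/\mathcal{Z}$ in $Y$ deduced from the dissipation estimate; the two are equivalent in substance.
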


\begin{proof}
We integrate (\ref{eq_Xeta}) between $0$ and $t$, for all $t\in[0,T]$. It gives
\begin{equation*}
X^{\eta}(t) -\mu \int_{0}^{t} X^{\eta}(s) ds + \int_{0}^{t} S^{\eta}(s) ds \leq X^{\eta}(0).
\end{equation*}
Thus, there exists a constant $C>0$ such that
\begin{equation}
0\leq \int_{0}^{T}S^{\eta}(s) ds \leq C.
\end{equation}
If $\mathcal{P}$ is the orthogonal projection on $Ker \ \mathcal{Q}_B$ defined as in Proposition \ref{prop_Q}, 
there exists $N^{\eta}(t,x)$ such that $\mathcal{P}(f^{\eta})_n=N^{\eta} \mathcal{M}_n$. Moreover, $f^{\eta} - \mathcal{P}(f^{\eta}) \in (Ker \ \mathcal{Q}_B)^{\perp}$ that is to say that $\sum_{n\geq1} \int \big( f_n^{\eta}- \mathcal{P}(f^{\eta})_n \big)dp=0$. We conclude, using (\ref{norm}), that
\begin{equation}
N^{\eta}(t,x) =\sum_{n=1}^{+\infty} \int_{\mathbb{R}} f_n^{\eta}(t,x,p) dp := N_s^{\eta}(t,x).
\end{equation}
We can easily show that (iv) of Proposition \ref{prop_Q} is also true for the scalar product $\langle.,.\rangle_M$. 
We obtain the bound
\begin{equation}\label{bound_feta}
\sum_{n=1}^{+\infty} \int_0^T \intdouble \frac{(f^{\eta}-N_s^{\eta}\mathcal{M}_n)^2}{M_n} \,dxdpdt \leq \frac{\eta^2}{\alpha_1}\int_0^T S^{\eta}(s)\,ds \leq C \eta^2. 
\end{equation}
We verify that $\|f^{\eta}\|_X$ is bounded. Thus, we can extract a subsequence satisfying (i).

Then, from (\ref{bound_feta}), we have
\begin{equation*}
\sum_{n=1}^{+\infty} \int_0^T \intdouble \frac{(N_s^{\eta}\mathcal{M}_n)^2}{M_n}\,dxdpdt = \sum_{n=1}^{+\infty} \int_0^T \intdouble \Big(\frac{N_s^{\eta}}{\mathcal{Z}}\Big)^2 M_n \,dxdpdt\leq C.
\end{equation*}
It provides that $\|N_s^{\eta}/\mathcal{Z}\|_Y$ is bounded. 
Thus there exists $\rho\in Y$ such that, up to an extraction, for all $\phi\in L^2_tL^2_{M^{-1}}$,
\begin{equation}\label{bound_feta2}
\sum_{n=1}^{+\infty} \int_0^T \intdouble\frac{N_s^{\eta}}{\mathcal{Z}}M_n \phi_n\,dxdpdt 
\rightarrow \sum_{n=1}^{+\infty} \int_0^T \intdouble\rho M_n \phi_n\,dxdpdt.
\end{equation}
For $\xi \in L^2([0,T]\times\RR)$ take $\phi_n (t,x,p)=\xi(t,x)$ for all $n\geq1$, we easily verify that $\phi \in L^2_tL^2_{M^{-1}}$. So, if we define $N_s := \rho \mathcal{Z} \in L^2([0,T]\times\RR)$, we find
\begin{equation*}
\int_0^T \int N_s^{\eta} \xi \Bigg(\sum_{n=1}^{+\infty} \int \frac{M_n}{\mathcal{Z}} dp \Bigg) dx dt \rightarrow \int_0^T \int \rho \mathcal{Z} \xi \Bigg(\sum_{n=1}^{+\infty} \int\frac{M_n}{\mathcal{Z}} dp \Bigg) dx dt.
\end{equation*}
This proves (ii). Moreover, from (\ref{bound_feta2}), $N^{\eta}_s\frac{M_n}{\mathcal{Z}}=N_s^{\eta} \mathcal{M}_n \rightharpoonup \rho M_n$ in $X$-weak$^{*}$. From (\ref{bound_feta}), $f^{\eta}$ and $N_s^{\eta} \mathcal{M}_n$ have the same weak limit. So, for all $n\geq1$,
\begin{equation}\label{f_0}
f=\rho M_n = N_s \mathcal{M}_n \ \ \hbox{a.e.}
\end{equation}

Finally, we have
\begin{equation*}
J^{\eta}=\frac{1}{\eta} \sum_{n=1}^{+\infty} \int_{\mathbb{R}} v_n f_n^{\eta} dp = \frac{1}{\eta} \sum_{n=1}^{+\infty} \int_{\mathbb{R}} v_n \big(f_n^{\eta} -N_s \mathcal{M}_n \big) dp.
\end{equation*}
Applying the Cauchy-Schwarz inequality
\begin{equation}
J^{\eta} \leq \Bigg(\sum_{n=1}^{+\infty}\int_{\mathbb{R}} v_n^2 M_n dp \Bigg)^{1/2} \Bigg(\sum_{n=1}^{+\infty}\int_{\mathbb{R}} \frac{(f_n^{\eta} -N_s \mathcal{M}_n)^2}{\eta^2 M_n} dp \Bigg)^{1/2}.
\end{equation}
With (\ref{bound_feta}), we deduce that $J^{\eta}$ is bounded in $L^2_{t,x}$.
\end{proof}

{\bf Proof of Theorem \ref{th_diffusive_limit}.}
Integrating the Boltzmann equation \eqref{boltzman} with respect to $p$,
we find the conservation law
\begin{equation}\label{conservation_law}
\partial_t N_s^{\eta} + \partial_x J^{\eta} =0
\end{equation}
where $J^{\eta}$ is the current defined in (\ref{def_current_eta}). Considering the function $\Theta$ defined in (\ref{def_theta}), we have
\begin{equation*}
J^{\eta} = \frac{1}{\eta} \sum_{n=1}^{+\infty} \int_{\mathbb{R}} v_n f_n^{\eta} dp 
=\frac{1}{\eta}\langle v_n\mathcal{M}_n,f_n^{\eta}\rangle_{\mathcal{M}}
=-\frac{1}{\eta}\langle\mathcal{Q}_B(\Theta)_n,f_n^{\eta}\rangle_{\mathcal{M}}.
\end{equation*}
The selfadjointness of $\mathcal{Q}_B$ gives
\begin{equation}\label{j_eta}
-J^{\eta}=\frac{1}{\eta}\langle\Theta_n,\mathcal{Q}_B(f^{\eta})_n\rangle_{\mathcal{M}}
=\sum_{n=1}^{+\infty} \int_{\mathbb{R}} \frac{\Theta_n \mathcal{Q}_B(f^{\eta})_n}{\eta \mathcal{M}_n} dp.
\end{equation}

Now, to establish the rigorous limit $\eta\rightarrow0$, we use the weak formulation of (\ref{boltzman}) and (\ref{conservation_law})~: for all $\psi\in\mathcal{C}^1([0,T]\times\RR^2)$ compactly supported and for all $\phi\in\mathcal{C}^1([0,T]\times\RR)$ compactly supported :
\begin{equation}\label{varia_B}
\inttriple f_n^{\eta}\big(-\eta \partial_t\psi -v_n\partial_x \psi +\partial_x V_{nn} \partial_p \psi\big) dxdpdt-\frac{1}{\eta}\inttriple \mathcal{Q}_B(f^{\eta})_n \psi dxdpdt=0,
\end{equation}
and
\begin{equation}\label{varia_CE}
-\intdouble N_s^{\eta} \partial_t \phi dxdt-\intdouble J^{\eta} \partial_x \phi dxdt=0.
\end{equation}
At this point we use Lemmas of Appendix A. On the one hand, from Lemma \ref{lemme1_appendix1}, we can substitute $\psi$ by $\phi$ in (\ref{varia_B}). Summing with respect to $n$, we immediately obtain (\ref{varia_CE}). On the other hand, Lemmas \ref{lemme1_appendix1} and \ref{lemme2_appendix1} prove that we can choose $\psi=\phi \frac{\Theta_n}{\mathcal{M}_n}$ in (\ref{varia_B}) for all $\phi \in \mathcal{C}^1([0,T]\times\RR)$ compactly supported. Summing with respect to $n$ and using (\ref{j_eta}), we find after calculations
\begin{eqnarray}
&& -\eta \sum_{n=1}^{+\infty}\inttriple f_n^{\eta} \partial_t\Big(\phi \frac{\Theta_n}{\mathcal{M}_n}\Big) dxdpdt 
- \sum_{n=1}^{+\infty}\inttriple v_n \frac{f_n^{\eta} }{\mathcal{M}_n} \partial_x \big(\Theta_n \phi \big)dxdpdt\nonumber\\
&& +\sum_{n=1}^{+\infty}\inttriple \phi \frac{f_n^{\eta}}{\mathcal{M}_n} 
\big(\partial_x V_{nn} \partial_p \Theta_n -v_n \Theta_n \partial_x \ln\mathcal{Z} \big) dxdpdt + \intdouble J^{\eta} \phi dxdt=0.
\end{eqnarray}
Using Lemma \ref{lemma2diff}, we have the weak convergence of $f_n^\eta$,
$N_s^\eta$ and $J^\eta$. Moreover,
we have that $f^{\eta}_n \in L^2_{\mathcal{M}}$ thus the limit of the first term vanishes thanks to Lemma \ref{lemme2_appendix1}. Moreover, this same Lemma proves that $v_n\frac{\Theta_n}{\mathcal{M}_n}$ and $v_n\frac{\partial_s \Theta_n}{\mathcal{M}_n}$ (for $s=t$, $x$ and $p$) are in $X$. 
Finally, since by assumption $V_{nn}$ is bounded in $L^{\infty}([0,T];H^2(\RR))$,
we can pass to the limit and we obtain
\begin{equation*}
\intdouble J \phi\, dxdt= \sum_{n=1}^{+\infty}\inttriple \Bigg( v_n N_s \partial_x \big(\Theta_n\phi\big) 
-N_s\phi\big(\partial_x V_{nn} \partial_p \Theta_n 
-v_n \Theta_n \partial_x \ln\mathcal{Z} \big)\Bigg) dxdpdt,
\end{equation*}
which is the weak formulation of 
$J=-D(\partial_x N_s + N_s \partial_x V_s)$. 
\qed

\section{Analysis of the Nanowire Drift-Diffusion-Poisson system}
\label{analysis}

\subsection{Spectral properties}
\label{spectrum}

In this Section, we investigate some technical Lemmas concerning 
spectral properties of the Hamiltonian defined in \eqref{eigen_value_pb}.
As in Section \ref{kinetic_description} we denote by $\Lambda_n$ the eigenvalues of the Laplacian, i.e.
$$\left\{
\begin{array}{l}
\dsp -\Delta u_n = \Lambda_n u_n , \qquad \mbox{ on }\quad 
\calU=(-1/2,1/2)\times\omega_z, \\[2mm]
\dsp u_n(-1/2,z')=u_n(1/2,z'), \qquad 
u_n(y,z') = 0 \quad \mbox{ on }(-1/2,1/2)\times\partial\omega_z.
\end{array}\right.
$$
From the min-max principle it is clear that for nonnegative
potential $W_\calL$, we have $E_n\geq \Lambda_n$. Moreover,
the eigenfunctions $u_n$ satisfy $u_n=u_n^1u_n^2$ where
$(u_n^1)_{n\in \NN}$ are eigenvectors of the Laplacian in the $y$-direction
with periodic boundary conditions and
$(u_n^2)_{n\in \NN}$ are eigenvectors of the Laplacian in the $z$-direction 
with Dirichlet boundary conditions.
From well-known properties of eigenvalues of the Laplacian-Dirichlet \cite{henrot}, 
we deduce that for all $\lambda>0$, $\sum_{n\geq1} e^{-\lambda\Lambda_n}<+\infty$. Thus
\beq\label{sumEn}
\forall\, \lambda >0, \quad \sum_{n=1}^{+\infty} e^{-\lambda E_n} <\infty.
\eeq

In the following we will make use of the notation~:
$L^p_zL^q_x(\Omega)=\{u\in L^1_{loc} (\Omega) \mbox{ s.t. }
\|u\|_{L^p_zL^q_x(\Omega)} =(\int_{\omega_z}
\|u(\cdot,z)\|_{L^q(\omega_x)}^p\,dz)^{1/p}<+\infty \}$.
We recall that we have the Sobolev embedding 
$H^1(\Omega)\hookrightarrow L^2_zL^\infty_x(\Omega)$ where 
$\Omega=\omega_x\times\omega_z\subset \RR\times\RR^2$ (see \cite{vsp}).

\begin{lemma}\label{lemme_spec}
Under Assumption \ref{hyp_WL} we have that for all $n\in \NN$
\beq\label{born_chi}
\|\chi_n\|_{L^\infty(\Omega)} \leq C(E_n + \|W_{\calL}\|_{L^\infty}),
\eeq
where $C$ stands for a nonnegative constant.
Therefore we have
\beq\label{estimate_g_nn}
\|g_{nn}\|_{L^{\infty}(\omega_z)} \leq C(E_n+ \|W_{\calL}\|_{L^\infty})^2,
\eeq
\beq\label{estimate_v_nn}
\|V_{nn}\|_{L^\infty(\omega_x)} \leq C\|V\|_{H^1(\Omega)}
(E_n+ \|W_{\calL}\|_{L^\infty})^2.
\eeq
\end{lemma}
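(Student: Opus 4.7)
The strategy is to establish (\ref{born_chi}) first by $H^2$ elliptic regularity combined with a Sobolev embedding, then derive (\ref{estimate_g_nn}) trivially from it, and finally obtain (\ref{estimate_v_nn}) by a Cauchy--Schwarz argument in the transverse variable combined with the anisotropic embedding $H^1(\Omega)\hookrightarrow L^2_z L^\infty_x(\Omega)$ recalled just above the statement.

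To prove (\ref{born_chi}), I rewrite the Bloch problem (\ref{eigen_value_pb}) as $-\tfrac{1}{2}\Delta\chi_n = (E_n-W_\calL)\chi_n$ on $\calU$ with Dirichlet data on $\partial\omega_z$ and periodicity in $y$. Assumption \ref{hyp_WL} and the normalization $\|\chi_n\|_{L^2(\calU)}=1$ yield $\|(E_n-W_\calL)\chi_n\|_{L^2(\calU)}\leq E_n+\|W_\calL\|_{L^\infty}$. Standard $H^2$ elliptic regularity on the cylindrical domain $\calU$ (the periodic direction being handled by unfolding to the torus and the Dirichlet direction by the smoothness of $\omega_z$) then gives $\|\chi_n\|_{H^2(\calU)} \leq C(E_n+\|W_\calL\|_{L^\infty}+1)$. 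Since $E_n\geq \Lambda_1>0$ by the min-max principle, the additive constant is absorbed into the prefactor, and Morrey's embedding $H^2(\calU)\hookrightarrow L^\infty(\calU)$ in dimension three concludes. The delicate step, and the main obstacle of the argument, is the $H^2$ regularity up to the boundary under mixed periodic/Dirichlet conditions; it requires $\omega_z$ to be sufficiently smooth (e.g.\ $C^{1,1}$ or convex), which is the only geometric requirement needed beyond what the paper already assumes.

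The bound (\ref{estimate_g_nn}) is immediate: from $g_{nn}(z)=\int_{-1/2}^{1/2}\chi_n^2(y,z)\,dy\leq \|\chi_n\|_{L^\infty(\calU)}^2$ together with (\ref{born_chi}).

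For (\ref{estimate_v_nn}), I apply Cauchy--Schwarz in $z$ to (\ref{v_nn}) pointwise in $x$ to get $|V_{nn}(t,x)|\leq \|V(t,x,\cdot)\|_{L^2(\omega_z)}\,\|g_{nn}\|_{L^2(\omega_z)}$. Taking the supremum in $x$ and using the elementary inequality $\sup_x\int_{\omega_z}V^2\,dz \leq \int_{\omega_z}\sup_x V^2\,dz$, one obtains
\begin{equation*}
\|V_{nn}(t,\cdot)\|_{L^\infty(\omega_x)} \leq \|V(t,\cdot,\cdot)\|_{L^2_z L^\infty_x(\Omega)}\,\|g_{nn}\|_{L^2(\omega_z)}.
\end{equation*}
The first factor is controlled by $C\|V\|_{H^1(\Omega)}$ via the recalled Sobolev embedding, and the second by $|\omega_z|^{1/2}\|g_{nn}\|_{L^\infty(\omega_z)}\leq C(E_n+\|W_\calL\|_{L^\infty})^2$ via (\ref{estimate_g_nn}), which proves (\ref{estimate_v_nn}).
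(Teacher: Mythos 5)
Your proof is correct and takes essentially the same route as the paper: the bound \eqref{born_chi} via $\|\Delta\chi_n\|_{L^2}\leq C(E_n+\|W_{\calL}\|_{L^\infty})$, elliptic regularity and the embedding $H^2\hookrightarrow L^\infty$, and then \eqref{estimate_g_nn}, \eqref{estimate_v_nn} by Cauchy--Schwarz together with $H^1(\Omega)\hookrightarrow L^2_zL^\infty_x(\Omega)$. The only difference is that you spell out the details the paper leaves implicit (the mixed periodic/Dirichlet $H^2$ regularity and the regularity of $\omega_z$ it requires), which is a fair point but not a deviation in method.
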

\begin{proof}
We notice first that since $W_{\calL}$ (and $\chi_n$) is 1-periodic in $x$, 
we have that $\|W_{\calL}\|_{L^\infty(\calU)}=\|W_{\calL}\|_{L^\infty(\Omega)}$.
From \eqref{eigen_value_pb}, we deduce that $\Delta \chi_n\in L^2(\Omega)$ and 
$$
\|\Delta\chi_n\|_{L^2}\leq 2 (E_n +  \|W_{\calL}\|_{L^\infty}).
$$
Using the elliptic regularity for the Laplacian operator, we deduce 
\eqref{born_chi} thanks to the Sobolev embedding 
$H^2(\Omega)\hookrightarrow L^\infty(\Omega)$. The estimates 
\eqref{estimate_g_nn} and \eqref{estimate_v_nn} follows directly
from a Cauchy-Schwarz inequality and the Sobolev embedding 
$H^1(\Omega)\hookrightarrow L^2_zL^\infty_x(\Omega)$.
\end{proof}

\begin{lemma}\label{proposition_S}
{\bf Properties of $\mathcal{S}[V]$}. Let us assume that $V \in H^1(\Omega)$ with $V\geq 0$. Then the 
function $\mathcal{S}[V]$, defined in (\ref{def_S}), satisfies~:\\
(i)  $\mathcal{S}[V]$ is bounded i.e. $|\mathcal{S}[V]| < +\infty$.  \\
(ii) If moreover $\widetilde{V} \in H^1(\Omega)$ with $\widetilde{V}\geq 0$, 
then there exists a nonnegative constant $C$ such that~:
\begin{equation}\label{bornS}
\| \mathcal{S}[V]-\mathcal{S}[\widetilde{V}] \|_{L^2(\Omega)} \leq C \| V-\widetilde{V} \|_{L^2(\Omega)}.
\end{equation}
\end{lemma}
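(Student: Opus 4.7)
First I would write $\mathcal{S}[V](x,z) = \mathcal{Z}(x)^{-1}\sum_n a_n(x)\,g_{nn}(z)$ with $a_n(x)=e^{-(E_n+V_{nn}(x))}$ and $\mathcal Z=\sum_n a_n$. Since $V\geq 0$ and $g_{nn}=\int \chi_n^2\,dy\geq 0$ (choosing $\chi_n$ real, which is possible by Assumption \ref{simple_E}), we have $V_{nn}\geq 0$ and hence $a_n\leq e^{-E_n}$. The bound $\|g_{nn}\|_{L^\infty(\omega_z)}\leq C(E_n+\|W_{\mathcal L}\|_\infty)^2$ from Lemma \ref{lemme_spec}, combined with \eqref{sumEn} which dominates any polynomial growth in $E_n$, shows that the numerator $\sum_n a_n(x)\,\|g_{nn}\|_\infty$ is uniformly bounded on $\Omega$. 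For the denominator I keep only the first term: $\mathcal{Z}(x)\geq a_1(x)\geq \exp(-E_1-\|V_{11}\|_{L^\infty(\omega_x)})>0$, where $\|V_{11}\|_{L^\infty(\omega_x)}$ is controlled by $\|V\|_{H^1}$ through \eqref{estimate_v_nn}. Combining these two estimates yields (i).

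\textbf{Plan for (ii).} I would use a convex path argument: set $V_\theta=(1-\theta)V+\theta\widetilde V$ for $\theta\in[0,1]$ (still nonnegative and $H^1$), with corresponding quantities $V_{nn,\theta}$, $a_n^\theta$, $\mathcal Z^\theta$ and probability weights $p_n^\theta=a_n^\theta/\mathcal Z^\theta$. Writing $w_n=\widetilde V_{nn}-V_{nn}$, a direct differentiation yields the ``covariance'' identity
\[\frac{d}{d\theta}\mathcal{S}[V_\theta]=\sum_n g_{nn}\,p_n^\theta\Big(-w_n+\sum_m w_m\,p_m^\theta\Big),\]
so $\mathcal S[\widetilde V]-\mathcal S[V]=\int_0^1 (d/d\theta)\mathcal S[V_\theta]\,d\theta$. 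Using the pointwise bound $|w_n(x)|\leq \|g_{nn}\|_{L^\infty}\|V(x,\cdot)-\widetilde V(x,\cdot)\|_{L^1(\omega_z)}$, Cauchy--Schwarz on $\omega_z$, and the uniform control on $\sum_n p_n^\theta\,\|g_{nn}\|_\infty^2$ obtained exactly as in (i), I would derive a pointwise bound
\[|\mathcal S[V]-\mathcal S[\widetilde V]|(x,z)\leq C\,\|V(x,\cdot)-\widetilde V(x,\cdot)\|_{L^2(\omega_z)}\]
independent of $z$, with $C$ depending on $\|V\|_{H^1}$ and $\|\widetilde V\|_{H^1}$. Squaring and integrating over $\Omega=\omega_x\times\omega_z$ then gives \eqref{bornS}.

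\textbf{Main obstacle.} The delicate point is the uniform handling of the infinite series along the path. The lower bound on $\mathcal Z^\theta$ must remain strictly positive, which is ensured by bounding $V_{11,\theta}$ in $L^\infty$ via Lemma \ref{lemme_spec} applied separately to $V$ and $\widetilde V$ and then using convexity. More seriously, the cross term $\sum_n p_n^\theta\,\|g_{nn}\|_\infty^2$ that appears in the Lipschitz estimate involves a quartic polynomial in $E_n$, so the full strength of \eqref{sumEn} -- summability of $e^{-\lambda E_n}$ for every $\lambda>0$ -- is what actually closes the estimate. Once this summability is in hand, the Lipschitz bound drops out directly from the covariance formula.
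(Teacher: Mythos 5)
Your proposal is correct and follows essentially the same route as the paper: for (i) the lower bound on $\mathcal{Z}$ via $E_1$ and $\|V_{11}\|_{L^\infty}$ together with the estimates of Lemma \ref{lemme_spec} and the summability \eqref{sumEn}, and for (ii) the fundamental theorem of calculus along the convex path $\widetilde V+s(V-\widetilde V)$, whose derivative is exactly the quotient-rule (``covariance'') expression the paper bounds, followed by Cauchy--Schwarz in $z$ and the same uniform control of the weighted series to get a bound in terms of $\|V(x,\cdot)-\widetilde V(x,\cdot)\|_{L^2(\omega_z)}$ that is then integrated over $\Omega$. The only cosmetic difference is that you bound $|\langle V-\widetilde V,g_{nn}\rangle|$ through an $L^1$--$L^\infty$ step before Cauchy--Schwarz, whereas the paper applies Cauchy--Schwarz with $\|g_{nn}\|_{L^2(\omega_z)}$ directly; this changes nothing of substance.
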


\begin{proof}
{\bf (i)} First, we study the coefficient $\mathcal{Z}$. We have
\begin{equation*}
\mathcal{Z}(x)= \sum_{n=1}^{+\infty} e^{-\big(E_n +V_{nn}(x)\big)} \geq e^{-\big(E_1 +\|V_{11}\|_{L^{\infty}(\omega_x)}\big)}.
\end{equation*}
Then from Lemma \ref{lemme_spec}, when $V \in H^1(\Omega)$, 
there exists a constant $C>0$ such that $\mathcal{Z}(x) > C$.
Using the fact that $V_{nn} \geq 0$ when $V \geq 0$, we get
\begin{equation*}
|\mathcal{S}[V](x,z)|\leq C \sum_{n=1}^{+\infty} e^{-E_n} \| g_{nn}\|_{L^{\infty}(\omega_z)}\leq
C\sum_{n=1}^{+\infty} e^{-E_n}(E_n+\|W_{\calL}\|_{L^\infty})^2,
\end{equation*}
where we use \eqref{estimate_g_nn} for the last inequality.
A direct consequence of (\ref{sumEn}) is that $\sum_{n\geq1} E_n^2 e^{-E_n}$ is finite. Then $\mathcal{S}[V]$ is bounded.

{\bf (ii)} We use the fact that
\begin{equation*}
\mathcal{S}[V] -\mathcal{S}[\widetilde{V}] = 
\int_0^1 \partial_s S[\widetilde{V}+s(V-\widetilde{V})] ds.
\end{equation*}
We define
$\calE(s)=e^{-\big(E_n+\langle\widetilde{V}+s(V-\widetilde{V}),g_{nn}\rangle\big)}$. So,
\begin{equation*}
\partial_s \mathcal{S}[\widetilde{V}+s(V-\widetilde{V})] = 
\frac{\Big(\sum_{n\geq1} \calE'(s)g_{nn}\Big)\Big(\sum_{n\geq1} \calE(s)\Big) - \Big(\sum_{n\geq1} \calE(s) g_{nn} \Big)\Big(\sum_{n\geq1} \calE'(s)\Big)}{\Big(\sum_{n\geq1} \calE(s)\Big)^2}.
\end{equation*}
We have $\calE'(s)=-\langle V-\widetilde{V},g_{nn}\rangle\calE(s)$. 
The first term becomes
\begin{equation*}
\Big| \frac{\sum_{n\geq1} \calE'(s)g_{nn}}{\sum_{n\geq1} \calE(s)} \Big|= \Big| \frac{\sum_{n\geq1} \calE(s)\langle V-\widetilde{V},g_{nn} \rangle g_{nn}}{\sum_{n\geq1} \calE(s)} \Big| \leq C \sum_{n=1}^{+\infty} e^{-E_n} g_{nn} |\langle V-\widetilde{V},g_{nn}\rangle|.
\end{equation*}
Finally,
\begin{equation*}
\Big| \frac{\sum_{n\geq1} \calE'(s)g_{nn}}{\sum_{n\geq1} \calE(s)} 
\Big| \leq C \|V-\widetilde{V} \|_{L^2(\omega_z)} \sum_{n=1}^{+\infty} e^{-E_n} g_{nn} \|g_{nn}\|_{L^2(\omega_z)} \leq C\|V-\widetilde{V} \|_{L^2(\omega_z)},
\end{equation*}
where we use (\ref{estimate_g_nn}) and (\ref{sumEn}) for the last inequality. We can treat the second term in a similar way
\begin{equation*}
\Bigg|\frac{\Big(\sum_{n\geq1} \calE(s) g_{nn} \Big)\Big(\sum_{n\geq1} \calE(s)<V-\widetilde{V},g_{nn}>\Big)}{\Big(\sum_{n\geq1} \calE(s)\Big)^2}\Bigg| 
\leq C \sum_{n=1}^{+\infty} e^{-E_n} |\langle V-\widetilde{V},g_{nn}\rangle|.
\end{equation*}
Consequently, we deduce \eqref{bornS}.
\end{proof}

\subsection{Regularized system}
We define the linear regularization operator by
\begin{eqnarray}
\mathcal{R}^{\epsilon} : L^1(\Omega) &\rightarrow& C^{\infty}(\overline{\Omega}) \nonumber \\
\mathcal{V} & \mapsto & \mathcal{R}^{\epsilon} [\mathcal{V}](x,z) = (\overline{\mathcal{V}} * \xi_{\epsilon,x} * \xi_{\epsilon,z} )|_{\overline{\Omega}},
\end{eqnarray}
where $\overline{\mathcal{V}}$ is the extension of $\mathcal{V}$ by zero outside $\Omega$ and $\xi_{\epsilon,x}$ and $\xi_{\epsilon,z}$ are $C^{\infty}$ nonnegative compactly supported even approximations of the unity, respectively on $\mathbb{R}$ and $\mathbb{R}^2$. We can prove the following properties, using convolution results :

\begin{lemma}\label{lemme_reg}
{\bf Properties of $\mathcal{R}^\epsilon$} :\\
(i) $\mathcal{R}^\epsilon$ is a bounded operator on $L^p_xL^q_z(\Omega)$ for $1 \leq p,q\leq +\infty$ and satisfies for all $\mathcal{V} \in L^p_xL^q_z(\Omega)$,
\begin{equation*}
\|\mathcal{R}^\epsilon[\mathcal{V}]\|_{L^p_xL^q_z(\Omega)} \leq \|\mathcal{V}\|_{L^p_xL^q_z(\Omega)} \hspace{0.5cm}\hbox{and}\hspace{0.5cm}\lim_{\epsilon\rightarrow 0}\| \mathcal{R}^\epsilon[\mathcal{V}]-\mathcal{V}\|_{L^p_xL^q_z(\Omega)}=0.
\end{equation*}

\noindent (ii) $\mathcal{R}^\epsilon$ is selfadjoint on $L^2(\Omega)$.

\noindent (iii) For all $\mathcal{V} \in H^1(\Omega)$,
\begin{equation*}
\nabla_x \mathcal{R}^\epsilon[\mathcal{V}] = \mathcal{R}^\epsilon[\nabla_x \mathcal{V}] \hspace{0.5cm}\hbox{and}\hspace{0.5cm} \lim_{\epsilon \rightarrow 0} \|\nabla_x \mathcal{R}^\epsilon[\mathcal{V}]-\nabla_x \mathcal{V}\|_{L^2(\Omega)} =0.
\end{equation*}
\end{lemma}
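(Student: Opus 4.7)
The plan is to dispatch the three claims by standard mollifier arguments, paying attention to the mixed-norm $L^p_xL^q_z$ and to the zero extension off $\Omega$. For (i) I would apply Young's convolution inequality twice, writing $\mathcal{R}^\epsilon[\mathcal{V}]=\xi_{\epsilon,z}*_z(\xi_{\epsilon,x}*_x\overline{\mathcal{V}})$. For a.e.\ fixed $x$ the $z$-convolution with $\xi_{\epsilon,z}$, whose $L^1(\mathbb{R}^2)$-norm is $1$, is a contraction on $L^q(\omega_z)$; Minkowski's integral inequality then propagates this bound through the outer $L^p_x$-norm and the $x$-convolution, giving $\|\mathcal{R}^\epsilon[\mathcal{V}]\|_{L^p_xL^q_z}\leq\|\mathcal{V}\|_{L^p_xL^q_z}$. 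For the strong convergence I approximate $\mathcal{V}$ in $L^p_xL^q_z$ by $\phi\in C_c(\Omega)$: on such $\phi$, uniform continuity yields $\mathcal{R}^\epsilon[\phi]\to\phi$ uniformly on a set containing $\mathrm{supp}\,\phi$ and a fixed neighborhood, hence in $L^p_xL^q_z$, and the contraction bound closes a three-term density argument.

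For (ii), selfadjointness is a direct application of Fubini on $\mathbb{R}^3$: setting $\xi_\epsilon(y_1,y_2)=\xi_{\epsilon,x}(y_1)\xi_{\epsilon,z}(y_2)$, so that $\mathcal{R}^\epsilon[u]=(\overline{u}*\xi_\epsilon)|_\Omega$, one has
\begin{equation*}
\int_\Omega \mathcal{R}^\epsilon[u]\,v\,dx\,dz
=\int_{\mathbb{R}^3}(\overline{u}*\xi_\epsilon)\,\overline{v}\,dx\,dz
=\int_{\mathbb{R}^3}\overline{u}\,(\overline{v}*\check{\xi}_\epsilon)\,dx\,dz
=\int_\Omega u\,\mathcal{R}^\epsilon[v]\,dx\,dz,
\end{equation*}
where $\check{\xi}_\epsilon(y)=\xi_\epsilon(-y)$ and the third equality uses the evenness of both $\xi_{\epsilon,x}$ and $\xi_{\epsilon,z}$. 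For (iii), since $\xi_{\epsilon,x}$ is smooth and compactly supported I differentiate under the convolution to get $\partial_x\mathcal{R}^\epsilon[\mathcal{V}]=\overline{\mathcal{V}}*(\partial_x\xi_{\epsilon,x})*\xi_{\epsilon,z}$, then integrate by parts in $x$ to land on $\mathcal{R}^\epsilon[\partial_x\mathcal{V}]$; the $L^2(\Omega)$-convergence is then immediate from (i) applied with $p=q=2$ to $\partial_x\mathcal{V}\in L^2(\Omega)$.

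The one genuinely delicate step is the boundary bookkeeping in (iii): the distributional $x$-derivative of the zero extension $\overline{\mathcal{V}}$ equals $\overline{\partial_x\mathcal{V}}$ only up to trace measures at $\{x=0\}$ and $\{x=L\}$, so the commutation identity $\partial_x\mathcal{R}^\epsilon[\mathcal{V}]=\mathcal{R}^\epsilon[\partial_x\mathcal{V}]$ holds literally only at points $(x,z)\in\Omega$ whose distance to $\partial\omega_x\times\omega_z$ exceeds the support of $\xi_{\epsilon,x}$. I would handle this either by restricting the identity to this interior strip and tracking a thin boundary error separately in the limit, or by observing that in the sequel $\mathcal{R}^\epsilon$ will be applied to quantities adjusted by the boundary data $V_b$ so as to vanish near $\partial\omega_x$, killing the offending trace contributions. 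All remaining steps are routine convolution bookkeeping.
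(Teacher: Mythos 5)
Your treatment of (i) and (ii) is correct and is precisely the standard convolution argument that the paper invokes without detail (``using convolution results''): Young's inequality combined with Minkowski's integral inequality for the mixed-norm contraction, an $\varepsilon/3$ density argument for the strong convergence (note that this part requires $p,q<+\infty$, a restriction the statement of the lemma glosses over since mollification does not converge in $L^\infty$), and Fubini together with the evenness of $\xi_{\epsilon,x}$ and $\xi_{\epsilon,z}$ for selfadjointness on $L^2(\Omega)$.

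On (iii), your reservation is the substantive point and it is well founded: with the extension by zero, $\partial_x\overline{\mathcal{V}}$ differs from $\overline{\partial_x\mathcal{V}}$ by trace measures carried by $\{0,L\}\times\omega_z$, so the commutation identity holds only at points of $\Omega$ whose distance to $\partial\omega_x\times\omega_z$ exceeds the size of $\mathrm{supp}\,\xi_{\epsilon,x}$. Worse, the mollified trace terms contribute a spike of order $\epsilon^{-1/2}$ to $\|\nabla_x\mathcal{R}^\epsilon[\mathcal{V}]\|_{L^2}$ in the boundary layer (already visible for $\mathcal{V}\equiv 1$ in one dimension), so the stated $L^2(\Omega)$ convergence of gradients also fails for a general $\mathcal{V}\in H^1(\Omega)$ with nonzero trace at $x=0,L$; your first remedy (interior identity plus boundary-layer bookkeeping) therefore yields a correct amended statement but cannot recover the lemma verbatim. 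Your second remedy does not match the paper either: $\mathcal{R}^\epsilon$ is applied directly to $V^\epsilon$ and $\rho^\epsilon$, whose traces at $x\in\partial\omega_x$ are the (generally nonzero) data $V_b$. So either (iii) must be restricted (e.g.\ to $\mathcal{V}$ vanishing near $\partial\omega_x\times\omega_z$, or asserted only on $\{c\epsilon<x<L-c\epsilon\}$), or the zero extension should be replaced by a genuine $H^1$ extension or reflection in $x$ before mollifying. Since the paper offers no proof of this lemma, this is a defect of the statement that your write-up correctly exposes rather than a flaw in your argument; just say explicitly that the lemma as literally written is what needs amending, rather than leaving the repair implicit.
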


\noindent Then the regularized Nanowire Drift-Diffusion Poisson NDDP$_{\epsilon}$ system is defined for $\epsilon \in [0,1]$ by
\begin{equation}\label{driduf_reg}
\partial_t N^\epsilon_s -\partial_x\Big(D(\partial_x N^\epsilon_s + N^\epsilon_s \partial_x V^\epsilon_s) \Big)=0
\end{equation}
and
\begin{equation}\label{pois_reg}
-\Delta V^\epsilon = \mathcal{R}^\epsilon\Big[\frac{N^\epsilon_s}{\mathcal{Z^\epsilon}}\sum_{n=1}^{+\infty}e^{-\big(E_n+V^\epsilon_{nn}\big)} g_{nn}\Big]=\mathcal{R}^\epsilon\Big[N^\epsilon_s \mathcal{S}^{\epsilon}\Big]
\end{equation}
where the regularized quantities are defined by
\begin{equation}\label{v_nn_reg}
V^\epsilon_{nn}(x) =\int_{\omega_z}\mathcal{R}^\epsilon[V^\epsilon(x,z)]g_{nn}(z)dz=<\mathcal{R}^\epsilon[V^\epsilon],g_{nn}>,
\end{equation}
\begin{equation}\label{vs_reg}
V^{\epsilon}_s=-\log \mathcal{Z}^{\epsilon}\hspace{0.5cm}\hbox{with}\hspace{0.5cm} \mathcal{Z}^{\epsilon}[V^{\epsilon}]=\sum_{n=1}^{+\infty}e^{-\big(E_n+V^{\epsilon}_{nn}\big)},
\end{equation}
and
\begin{equation}\label{S_reg}
\mathcal{S}^{\epsilon}[V^{\epsilon}]= \sum_{n\geq1} \frac{e^{-\big(E_n+V^{\epsilon}_{nn}\big)}}{\mathcal{Z}^{\epsilon}} g_{nn}.
\end{equation}
As above, we denote $N_n^\epsilon=u^\epsilon e^{-(E_n+V_{nn}^\epsilon)}$ where
$u^\epsilon$ is the Slotboom variable $u^\epsilon=N_s^\epsilon/\mathcal{Z}^{\epsilon}$.
The initial regularized density $N_s^{\epsilon,0}$ is chosen such that $N_s^{\epsilon,0}=\min(N_s^0,\epsilon^{-1})$. Moreover, the regularized boundary conditions are 
\begin{eqnarray}
\label{bdyreg1}
N^{\epsilon}_s(t,x)=N_b \hspace{1cm} &\hbox{for }& x\in \partial\omega_x,\\ 
\label{bdyreg2}
V^{\epsilon}(t,x,z)=V_b(z) \hspace{1cm} &\hbox{for }& x\in \partial\omega_x,\\
\label{bdyreg3}
\partial_z V^{\epsilon}(t,x,z)=0 \hspace{1cm} &\hbox{for }& z\in \partial\omega_z.
\end{eqnarray}

\begin{remark}
When $\epsilon \rightarrow 0$, $\mathcal{R}^{\epsilon} \rightarrow Id$ and the regularized system \eqref{driduf}--\eqref{bdy3} tends to the unregularized problem \eqref{driduf_reg}--\eqref{bdyreg3}. 
\end{remark}

\subsection{A priori estimates}
\label{apriori}

Let us consider a weak solution $(N_s^\epsilon,V^\epsilon)$ of the regularized problem
(\ref{driduf_reg})--(\ref{S_reg}).
We introduce two extensions $\underline{N_s}$ and $\underline{V}$ of the boundary data. These extensions are respectively defined on $\omega_x$ and $\Omega$ and chosen such that :
\begin{itemize}
\item $\underline{N_s}\in C^2(\omega_x)$, $0<\underline{N_1}\leq \underline{N_s}(x) \leq \underline{N_2}$ where $\underline{N_1}$ and $\underline{N_2}$ are two constants, and $\underline{N_s}|_{\partial\omega_x}=N_b$.
\item $\underline{V}\in C^2(\Omega)$ and satisfies the boundary conditions $\underline{V}|_{\partial \omega_x\times\omega_z}=V_b(z)$ and $\partial_z \underline{V}|_{\omega_x\times\partial\omega_z}=0$.
\end{itemize}
For regular enough domains, these functions exist. From (\ref{v_nn}) with $\underline{V}$ instead of $V^{\epsilon}$, we define $V_{nn}[\underline{V}]$ denoted by $\underline{V_{nn}}$. In the same way, we denote $\underline{\mathcal{Z}}$, $\underline{\mathcal{S}}$, $\underline{N_n}$, $\underline{\rho}$, $\underline{u}$ and $\underline{E_F}$ the quantities associated to $\underline{N_s}$ and $\underline{V}$.

\begin{proposition}\label{prop_entropie1} 
Let $T>0$ and $\epsilon\in[0,1]$. 
Let $(N_s^\epsilon,V^\epsilon)$ be a weak solution of the regularized system NDDP$_{\epsilon}$
(\ref{driduf_reg})--(\ref{bdyreg3}), 
such that $N_s^\epsilon \log N_s^\epsilon \in L^{\infty}([0,T],L^1(\omega_x))$, $V^\epsilon \in L^{\infty}([0,T],H^1(\Omega))$ and $\sqrt{N_s^\epsilon} \in L^2([0,T],H^1(\omega_x))$. 
Then, there exists a nonnegative constant $C_T$ depending only on $T$ 
such that
\begin{equation}\label{estimW}
\forall t \in [0,T],\hspace{1.5cm}  0 \leq W(t) \leq C_T,
\end{equation}
where $W$ is the relative entropy defined by
\begin{equation}\label{def_entropie}
W=\sum_{n=1}^{+\infty} \int_{\omega_x} \Big(N_n^\epsilon \ln\Big(\frac{N_n^\epsilon}{\underline{N_n}}\Big) - N_n^\epsilon + \underline{N_n} \Big)dx + \frac{1}{2}\int_{\Omega} |\nabla (V^\epsilon-\underline{V})|^2dxdz.
\end{equation}
\end{proposition}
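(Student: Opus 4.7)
The plan is to prove $W\geq 0$ by a pointwise convexity argument, and to establish the upper bound by differentiating $W$ in time, extracting a negative dissipation, controlling the remainder via the spectral estimates of Section \ref{spectrum}, and closing the estimate via Gronwall's lemma.

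Nonnegativity of $W$ follows from the Bregman-type inequality $a\ln(a/b)-a+b\geq 0$ valid for all $a\geq 0$, $b>0$ (with the convention $0\ln 0=0$), which is a consequence of the convexity of $x\mapsto x\ln x$. Applied pointwise with $a=N_n^\epsilon$ and $b=\underline{N_n}$ (positive by Lemma \ref{lemme_spec} and the positivity of $\underline{N_s}$), it shows that each summand of the entropy part of $W$ is nonnegative, and the Dirichlet term is manifestly a square.

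For the upper bound, the starting point is the Slotboom decomposition $\ln(N_n^\epsilon/\underline{N_n})=(E_F^\epsilon-\underline{E_F})-(V_{nn}^\epsilon-\underline{V_{nn}})$, which splits the time derivative of the entropy part into $\int_{\omega_x}(E_F^\epsilon-\underline{E_F})\partial_t N_s^\epsilon\,dx-\sum_n\int_{\omega_x}(V_{nn}^\epsilon-\underline{V_{nn}})\partial_t N_n^\epsilon\,dx$ (the first factor being $n$-independent, summation gives $\partial_t N_s^\epsilon$). For the Dirichlet part I would integrate by parts in $(x,z)$; the boundary contributions vanish because $V^\epsilon=V_b$ is time-independent on $\partial\omega_x\times\omega_z$ and $\partial_z(V^\epsilon-\underline V)=0$ on $\omega_x\times\partial\omega_z$. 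Substituting the regularized Poisson equation \eqref{pois_reg} and exploiting the self-adjointness of $\mathcal R^\epsilon$ together with the definitions \eqref{v_nn_reg}--\eqref{S_reg} then recasts this contribution as $\sum_n\int_{\omega_x}N_n^\epsilon\partial_t V_{nn}^\epsilon\,dx+\int_\Omega\Delta\underline V\,\partial_t V^\epsilon\,dx\,dz$.

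The dissipation emerges from the first entropy piece: inserting the drift-diffusion equation with flux $J^\epsilon=DN_s^\epsilon\partial_x E_F^\epsilon$, integrating by parts in $x$, and writing $\partial_x E_F^\epsilon=\partial_x(E_F^\epsilon-\underline{E_F})+\partial_x\underline{E_F}$ produces the negative dissipation $-\int DN_s^\epsilon|\partial_x(E_F^\epsilon-\underline{E_F})|^2\,dx$ plus a cross term absorbed by Young's inequality and a boundary contribution at $\partial\omega_x$. The remaining potential-type contributions $-\sum_n\int(V_{nn}^\epsilon-\underline{V_{nn}})\partial_t N_n^\epsilon\,dx+\sum_n\int N_n^\epsilon\partial_t V_{nn}^\epsilon\,dx+\int_\Omega\Delta\underline V\,\partial_t V^\epsilon\,dx\,dz$ are reorganized via $\partial_t V_{nn}^\epsilon=\langle\mathcal R^\epsilon[\partial_t V^\epsilon],g_{nn}\rangle$ and controlled through the uniform $L^\infty$ bounds on $g_{nn}$, $V_{nn}^\epsilon$ from Lemma \ref{lemme_spec} and the Lipschitz estimate on $\mathcal S^\epsilon$ from Lemma \ref{proposition_S}. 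The outcome is a differential inequality of the form $\frac{dW}{dt}\leq C_1 W(t)+C_2$ with constants depending only on $T$, $D$, the boundary data and the spectral data (through \eqref{sumEn}--\eqref{estimate_v_nn}); Gronwall's lemma then yields $W(t)\leq(W(0)+C_2T)\,e^{C_1T}=:C_T$. I expect the main obstacle to be the boundary contribution at $\partial\omega_x$ coming from the drift-diffusion integration by parts: although $N_s^\epsilon|_{\partial\omega_x}=N_b=\underline{N_s}|_{\partial\omega_x}$ reduces the factor $E_F^\epsilon-\underline{E_F}$ there to $V_s^\epsilon-\underline{V_s}$, this difference is generically nonzero because the regularization $\mathcal R^\epsilon$ shifts the boundary value of $V_{nn}^\epsilon$, so this term has to be either absorbed into the dissipation by Young's inequality with a small constant or bounded using the estimates of Lemmas \ref{lemme_spec} and \ref{proposition_S} together with the $C^2$-regularity of $\underline V$.
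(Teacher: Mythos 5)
Your overall strategy (relative entropy, Slotboom decomposition, Gronwall) is the paper's, and your nonnegativity argument and dissipation extraction are fine. But there is a genuine gap at the heart of the estimate: the treatment of the potential-coupling terms. After your reorganization you are left with
$-\sum_n\int_{\omega_x}(V_{nn}^\epsilon-\underline{V_{nn}})\,\partial_t N_n^\epsilon\,dx+\sum_n\int_{\omega_x}N_n^\epsilon\,\partial_t V_{nn}^\epsilon\,dx+\int_\Omega\Delta\underline V\,\partial_t V^\epsilon\,dx\,dz$,
and you propose to \emph{control} these through the $L^\infty$ bounds on $g_{nn}$, $V_{nn}^\epsilon$ and the Lipschitz estimate on $\mathcal S^\epsilon$. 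That cannot work: these terms involve $\partial_t N_n^\epsilon$ and $\partial_t V^\epsilon$, for which no a priori bound in terms of $W$ is available (indeed no bound on time derivatives is ever established at this stage), and Lemmas \ref{lemme_spec} and \ref{proposition_S} give no control of them. The point of including the Dirichlet term in $W$ is precisely that these contributions do not need to be estimated at all: writing $V_{nn}^\epsilon-\underline{V_{nn}}=\langle\mathcal R^\epsilon[V^\epsilon-\underline V],g_{nn}\rangle$, using $\sum_n N_n^\epsilon g_{nn}=\rho^\epsilon$, the selfadjointness of $\mathcal R^\epsilon$, the Poisson equation $-\Delta V^\epsilon=\mathcal R^\epsilon[\rho^\epsilon]$ and Green's identity with the boundary conditions ($V^\epsilon-\underline V=0$ and $\partial_t V^\epsilon=0$ on $\partial\omega_x\times\omega_z$, vanishing normal derivatives on $\omega_x\times\partial\omega_z$), the three terms cancel identically, leaving $\frac{dW}{dt}$ equal to the flux term alone. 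This exact structural cancellation (in the paper, the recasting of both potential terms as multiples of $\frac{d}{dt}\int|\nabla(V^\epsilon-\underline V)|^2$) is the key step of the proof and is missing from your plan; without it the differential inequality $\frac{dW}{dt}\le C_1W+C_2$ does not follow. (Also, to close that inequality you need $\|N_s^\epsilon\|_{L^1(\omega_x)}\le W+C$, obtained from $x\ln x-x+1\ge x+1-e$; you do not mention this, though it is a minor omission.)

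On the boundary term from the integration by parts in $x$: you are right that $\ln(u^\epsilon/\underline u)$ need not vanish exactly at $\partial\omega_x$ because the convolution in $\mathcal R^\epsilon$ shifts boundary values; the paper simply drops this term (implicitly identifying $u^\epsilon$ and $\underline u$ on $\partial\omega_x$). Your instinct to flag it is sound, but your proposed remedies are not convincing as stated: the dissipation $\mathcal D^\epsilon$ is an interior integral and does not control the pointwise trace of the flux $De^{-V_s^\epsilon}\partial_x u^\epsilon$ at $\partial\omega_x$, so ``absorbing by Young's inequality'' is not available; this term would instead have to be made to vanish by a consistent (regularized) definition of the reference quantities at the boundary or handled by a separate trace argument. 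This is, however, a secondary issue compared with the missing cancellation above.
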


\begin{proof}
We remark that
\begin{equation*}
\frac{d}{dt}\sum_{n=1}^{+\infty} \int_{\omega_x} \Big(N_n^\epsilon \ln\Big(\frac{N_n^\epsilon}{\underline{N_n}}\Big) - N_n^\epsilon + \underline{N_n} \Big)dx = \sum_{n=1}^{+\infty} \int_{\omega_x} \partial_t N_n^\epsilon \ln \Big(\frac{N_n^\epsilon}{\underline{N_n}}\Big) dx .
\end{equation*}
By definition, we have $\ln N_n^\epsilon=\ln u^\epsilon -E_n -V_{nn}^\epsilon$.
Using \eqref{driduf_reg}, it leads to
\begin{eqnarray*}
\sum_{n=1}^{+\infty} \int_{\omega_x} \partial_t N_n^\epsilon \ln  \Big(\frac{N_n^\epsilon}{\underline{N_n}}\Big) dx
 &=& \int_{\omega_x} \partial_x \big(De^{-V_s^\epsilon}\partial_xu^\epsilon\big) \ln\Big(\frac{u^\epsilon}{\underline{u}}\Big) dx  \\
&&-\frac{d}{dt} \sum_{n=1}^{+\infty} \int_{\omega_x}N_n^\epsilon\Big(V_{nn}^\epsilon-\underline{V_{nn}}\Big) dx +\sum_{n=1}^{+\infty} \int_{\omega_x}N_n^\epsilon\partial_t (V_{nn}^\epsilon-\underline{V_{nn}}) dx.
\end{eqnarray*}
Integrating by parts, the first right hand side term gives
\begin{equation*}
\int_{\omega_x} \partial_x \big(De^{-V_s^\epsilon}\partial_xu^\epsilon\big) \ln \Big(\frac{u^\epsilon}{\underline{u}}\Big) dx=-\int_{\omega_x} D e^{-V_s^\epsilon} \frac{(\partial_x u^\epsilon)^2}{u^\epsilon}dx+\int_{\omega_x} D e^{-V_s^\epsilon} \frac{\partial_x u^\epsilon \partial_x \underline{u}}{\underline{u}}dx.
\end{equation*}
Using the definition (\ref{v_nn_reg}) of $V_{nn}^\epsilon$, the last right hand side term gives
\begin{equation*}
\sum_{n=1}^{+\infty} \int_{\omega_x}N_n^\epsilon\partial_t  (V_{nn}^\epsilon-\underline{V_{nn}}) dx 
= \sum_{n=1}^{+\infty} \int_{\omega_x}N_n^\epsilon \partial_t < \mathcal{R}^{\epsilon}[V^\epsilon] - \mathcal{R}^{\epsilon}[\underline{V}], g_{nn}> dx.
\end{equation*}
At this point, the linearity and the selfadjointness of the regularization 
operator $\mathcal{R}^{\epsilon}$ and the regularized Poisson equation 
(\ref{pois_reg}) imply
\begin{equation*}
\sum_{n=1}^{+\infty} \int_{\Omega} N_n^\epsilon \partial_t \mathcal{R}^{\epsilon}[V^\epsilon-\underline{V}] g_{nn} dxdz = \frac{1}{2}\frac{d}{dt} \int_{\Omega}|\nabla (V^\epsilon-\underline{V})|^2 dxdz.
\end{equation*}
In the same way, we can write
\begin{equation*}
\frac{d}{dt} \sum_{n=1}^{+\infty} \int_{\omega_x}N_n^\epsilon (V_{nn}^\epsilon -\underline{V_{nn}}) dx = \frac{d}{dt} \int_{\Omega}|\nabla (V^\epsilon-\underline{V})|^2 dxdz.
\end{equation*}

Thus, defining $W$ as in (\ref{def_entropie}), we finally find
\begin{equation}
\frac{d W}{dt}=-\int_{\omega_x} D e^{-V_s^\epsilon} \frac{(\partial_x u^\epsilon)^2}{u^\epsilon}dx+\int_{\omega_x} D e^{-V_s^\epsilon} \frac{\partial_x u^\epsilon \partial_x \underline{u}}{\underline{u}}dx.
\end{equation}
We denote
\begin{equation}\label{entropy_diff_rate}
\mathcal{D}^{\epsilon}(t)=\int_{\omega_x} D e^{-V_s^\epsilon} \frac{(\partial_x u^\epsilon)^2}{u^\epsilon}dx
\end{equation}
the term which can be seen as an entropy dissipation rate. We also define $\beta=\|\partial_x \underline{u} / \underline{u}\|_{L^{\infty}(\omega_x)}$, $\beta < +\infty$. Consequently,
\begin{equation*}
\frac{d W}{dt} +\mathcal{D}^{\epsilon} \leq \beta \|D e^{-V_s^\epsilon} \partial_x u^\epsilon \|_{L^1(\omega_x)} \leq \beta \sqrt{\mathcal{D}^{\epsilon}} \sqrt{D\|N_s^\epsilon\|_{L^1(\omega_x)}}.
\end{equation*}
Using the inequality $2ab\leq \kappa^2 a^2 + \frac{b^2}{\kappa^2}$ for $\kappa>0$ small enough and Assumption \ref{assumption_D}, we get,
\begin{equation}\label{dW_dt}
\frac{d W}{dt} \leq \frac{d W}{dt} +C_1 \mathcal{D}^{\epsilon} \leq C_2 \|N_s^\epsilon\|_{L^1(\omega_x)},
\end{equation}
where $C_1$ and $C_2$ are two nonnegative constants.
Finally, using the inequality $\ln(x)-x+1\geq x+(1-e)$, for $x>0$, we have
\begin{equation*}
W \geq \sum_{n=1}^{+\infty} \int_{\omega_x} \underline{N_n} \Big(\frac{N_n^\epsilon}{\underline{N_n}}+1-e\Big) dx \geq \int_{\omega_x} N_s^\epsilon dx -(e-1) \int_{\omega_x} \underline{N_s} dx.
\end{equation*}
With \eqref{dW_dt}, it leads to
$$
 \frac{d W}{dt} \leq C_2 \|N_s^\epsilon\|_{L^1(\omega_x)}\leq C(W+C_0),
$$
where $C$ and $C_0$ are two nonnegative constants. We conclude thanks to a Gronwall's inequality and the fact that Assumption
\ref{assumption_init_N} and $V\in H^1(\Omega)$ imply that the initial 
entropy $W(0)$ is bounded. Moreover, we get the bound on the mass
\beq\label{mass}
\forall \, t\in [0,T],\qquad \int_{\omega_x} N_s^\epsilon\,dx \leq C.
\eeq
\end{proof}

\begin{corollary}\label{prop_dissipation} 
Let $T>0$ and $\epsilon\in[0,1]$. Under assumptions of Proposition \ref{prop_entropie1},
there exist $C_1$ and $C_2$ two nonnegative constants such that
\begin{equation}\label{a_priori_1}
\forall t \in [0,T],\hspace{1.5cm} \int_0^t \int_{\omega_x} |\partial_x \sqrt{N_s^\epsilon}|^2 dx ds \leq C_1,
\end{equation}
\begin{equation}\label{a_priori_2}
\forall t \in [0,T], \ \forall p\in [1,+\infty)\hspace{1.5cm} \int_0^t \| N_s^\epsilon \|_{L^{p}(\omega_x)} ds \leq C_2.
\end{equation}
\end{corollary}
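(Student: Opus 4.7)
The plan is to deduce both estimates from the uniform dissipation bound $\int_0^T \mathcal{D}^\epsilon(s)\,ds \le C$, which follows at once by integrating \eqref{dW_dt} over $[0,t]$ and invoking the entropy bound \eqref{estimW} together with the mass bound \eqref{mass}. From this, \eqref{a_priori_1} will follow after controlling a cross term arising in the algebraic manipulation of $\mathcal{D}^\epsilon$, and \eqref{a_priori_2} will be a consequence of \eqref{a_priori_1} via a one-dimensional Sobolev embedding.

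To extract \eqref{a_priori_1}, I first rewrite the dissipation in terms of $N_s^\epsilon$ using $u^\epsilon = N_s^\epsilon e^{V_s^\epsilon}$, which gives
\[
\mathcal{D}^\epsilon = \int_{\omega_x} \frac{D}{N_s^\epsilon}\bigl(\partial_x N_s^\epsilon + N_s^\epsilon \partial_x V_s^\epsilon\bigr)^2 dx.
\]
Applying the elementary inequality $(a+b)^2 \ge \tfrac12 a^2 - b^2$ with $a=\partial_x N_s^\epsilon$ and $b=N_s^\epsilon \partial_x V_s^\epsilon$, dividing by $N_s^\epsilon$, using the identity $(\partial_x N_s^\epsilon)^2/N_s^\epsilon = 4|\partial_x\sqrt{N_s^\epsilon}|^2$ and Assumption \ref{assumption_D}, one obtains
\[
\mathcal{D}^\epsilon \;\ge\; 2D_1 \int_{\omega_x} |\partial_x\sqrt{N_s^\epsilon}|^2 dx \;-\; D_2 \int_{\omega_x} N_s^\epsilon(\partial_x V_s^\epsilon)^2 dx,
\]
so the remaining task is to control the last term after time integration.

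The hard part is to estimate $\partial_x V_s^\epsilon$ uniformly. Differentiating $V_s^\epsilon = -\log\mathcal{Z}^\epsilon$ gives the representation $\partial_x V_s^\epsilon = \sum_n \partial_x V_{nn}^\epsilon \, p_n^\epsilon$ where $p_n^\epsilon = e^{-(E_n+V_{nn}^\epsilon)}/\mathcal{Z}^\epsilon$ is a probability weight summing to one. Jensen's inequality then yields $|\partial_x V_s^\epsilon|^2 \le \sum_n p_n^\epsilon|\partial_x V_{nn}^\epsilon|^2$. A maximum principle argument applied to \eqref{pois_reg} (the source $\rho^\epsilon$ being nonnegative, together with positivity of the boundary datum) gives $V^\epsilon \ge 0$, hence $V_{nn}^\epsilon \ge 0$ since $g_{nn}\ge 0$, and thus $p_n^\epsilon \le Ce^{-E_n}$. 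Combining this with the Cauchy--Schwarz estimate $|\partial_x V_{nn}^\epsilon(x)|\le \|g_{nn}\|_{L^2(\omega_z)}\|\partial_x\mathcal{R}^\epsilon[V^\epsilon](x,\cdot)\|_{L^2(\omega_z)}$, the spectral bound $\|g_{nn}\|_{L^2(\omega_z)}^2 \le C(E_n+\|W_{\calL}\|_\infty)^4$ from Lemma \ref{lemme_spec}, the summability property \eqref{sumEn} and the contractivity of $\mathcal{R}^\epsilon$ on $L^2$, integration in $x$ yields
\[
\|\partial_x V_s^\epsilon\|_{L^2(\omega_x)}^2 \;\le\; C \|V^\epsilon\|_{H^1(\Omega)}^2 \;\le\; C
\]
uniformly in $t$ and $\epsilon$, the $H^1$ bound on $V^\epsilon$ coming from \eqref{estimW}. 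Hölder's inequality then gives $\int_{\omega_x} N_s^\epsilon (\partial_x V_s^\epsilon)^2 dx \le C\|N_s^\epsilon\|_{L^\infty(\omega_x)}$, and the identity $\partial_x N_s^\epsilon = 2\sqrt{N_s^\epsilon}\partial_x\sqrt{N_s^\epsilon}$ combined with the constant boundary datum \eqref{bdyreg1} and the fundamental theorem of calculus in one dimension yields $\|N_s^\epsilon(t)\|_{L^\infty(\omega_x)} \le N_b + 2\|N_s^\epsilon(t)\|_{L^1}^{1/2}\|\partial_x\sqrt{N_s^\epsilon(t)}\|_{L^2(\omega_x)} \le C\bigl(1 + \|\partial_x\sqrt{N_s^\epsilon(t)}\|_{L^2}\bigr)$.

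Putting these pieces together and writing $Y(t) = \int_0^t\|\partial_x\sqrt{N_s^\epsilon}\|_{L^2(\omega_x)}^2\,ds$, one gets after Cauchy--Schwarz in time
\[
2D_1 Y(t) \;\le\; \int_0^t\mathcal{D}^\epsilon\,ds + C t + C t^{1/2} Y(t)^{1/2},
\]
and Young's inequality absorbs the last term to give $Y(t)\le C_1$, which is exactly \eqref{a_priori_1}. For \eqref{a_priori_2}, the one-dimensional Sobolev embedding $H^1(\omega_x)\hookrightarrow L^q(\omega_x)$ valid for every $q\in[1,+\infty]$ applied to $\sqrt{N_s^\epsilon}$ gives $\|N_s^\epsilon\|_{L^p(\omega_x)} = \|\sqrt{N_s^\epsilon}\|_{L^{2p}(\omega_x)}^2 \le C\bigl(\|N_s^\epsilon\|_{L^1}+\|\partial_x\sqrt{N_s^\epsilon}\|_{L^2}^2\bigr)$, and time integration together with \eqref{mass} and \eqref{a_priori_1} yields \eqref{a_priori_2}.
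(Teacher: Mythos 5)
Your proposal is correct, and it takes a genuinely different route from the paper on the only delicate point, namely how to extract $\|\partial_x\sqrt{N_s^\epsilon}\|_{L^2}^2$ from the dissipation. Both arguments start identically (time-integrate \eqref{dW_dt} and use \eqref{estimW}, \eqref{mass} to get $\int_0^T\mathcal{D}^\epsilon\,ds\leq C$) and end identically (one-dimensional Sobolev/Gagliardo--Nirenberg on $\sqrt{N_s^\epsilon}$ for \eqref{a_priori_2}). In between, the paper integrates the cross term $2\int D\,\partial_x N_s^\epsilon\,\partial_x V_s^\epsilon\,dx$ by parts, which obliges it to estimate $\partial_{xx}V_s^\epsilon$ (dropping the sign-definite part of its expression, then using the Poisson equation \eqref{pois_reg} and the bound on $\Delta_z g_{nn}$) and the boundary traces $N_s^\epsilon\partial_x V_s^\epsilon|_{\partial\omega_x}$ (trace theorem plus $H^2$ elliptic regularity), and finally to absorb the resulting $\|N_s^\epsilon\|_{L^2(\omega_x)}$ by Gagliardo--Nirenberg and \eqref{mass}, yielding the pointwise-in-time inequality $\|\partial_x\sqrt{N_s^\epsilon}\|_{L^2}^2\leq C(1+\mathcal{D}^\epsilon(t))$. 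You avoid the integration by parts entirely: the splitting $(a+b)^2\geq\tfrac12 a^2-b^2$, the uniform bound $\|\partial_x V_s^\epsilon\|_{L^2(\omega_x)}\leq C$ obtained by Jensen with the weights $e^{-(E_n+V_{nn}^\epsilon)}/\mathcal{Z}^\epsilon$, \eqref{estimate_g_nn}, \eqref{sumEn} and the entropy-based $H^1$ bound on $V^\epsilon$, the interpolation $\|N_s^\epsilon\|_{L^\infty}\leq C(1+\|\partial_x\sqrt{N_s^\epsilon}\|_{L^2})$, and a Young absorption after integration in time (legitimate, since $\sqrt{N_s^\epsilon}\in L^2([0,T];H^1(\omega_x))$ is part of the hypotheses, so the absorbed quantity is finite). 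What your route buys: only first derivatives of $V^\epsilon$ are needed, no trace theorem and no $H^2$ elliptic regularity; the price is that you control only the time-integrated quantity rather than the pointwise-in-time relation to $\mathcal{D}^\epsilon(t)$, which is all the corollary asks for. One caveat, common to both proofs: the weight bound $e^{-(E_n+V_{nn}^\epsilon)}/\mathcal{Z}^\epsilon\leq Ce^{-E_n}$ needs $V_{nn}^\epsilon\geq0$ together with the lower bound on $\mathcal{Z}^\epsilon$; you make this explicit via a maximum principle assuming $V_b\geq0$, a hypothesis not written in Assumption \ref{assumption_bord} but tacitly used by the paper as well, since its own proof invokes Lemma \ref{proposition_S} (stated for $V\geq0$) and \eqref{sumEn} at exactly the corresponding steps, so you are on the same footing as the paper there.
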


\begin{proof}
In this proof, the letter $C$ is used to denote nonnegative constants. 
We can express the coefficient $\mathcal{D}^{\epsilon}$ defined in (\ref{entropy_diff_rate})
in terms of $N_s^\epsilon$ and $V_s^\epsilon$
$$
\mathcal{D}^{\epsilon}(t)=\int_{\omega_x} D \Big( 4|\partial_x \sqrt{N_s^\epsilon}|^2 + 2\partial_x N_s^\epsilon \partial_x V_s^\epsilon + N_s^\epsilon |\partial_x V_s^\epsilon|^2\Big) dx.
$$
After an integration by parts on the second term of the right hand side, 
we deduce
\begin{equation}\label{bornH1Ns}
4\|\partial_x \sqrt{N_s^\epsilon}\|^2_{L^2(\omega_x)} \leq C\Bigg( \mathcal{D}^{\epsilon}(t) +2 \int_{\omega_x} N_s^\epsilon \partial_{xx} V_s^\epsilon dx -2 \Big(N_s^\epsilon(L) \partial_{x} V_s^\epsilon(L) + N_s^\epsilon(0) \partial_{x} V_s^\epsilon(0)\Big)\Bigg).
\end{equation}
On the one hand, after calculations, we find
\begin{equation*}
\partial_{xx} V_s^\epsilon = \frac{\sum_ne^{-\big(E_n+V_{nn}^\epsilon\big)} 
\partial_{xx} V_{nn}^\epsilon}{\mathcal{Z}^\epsilon} +
\Bigg(\frac{\sum_ne^{-\big(E_n+V_{nn}^\epsilon\big)} \partial_{x} V_{nn}^\epsilon}{\mathcal{Z}^\epsilon}\Bigg)^2 
- \frac{\sum_ne^{-\big(E_n+V_{nn}^\epsilon\big)} (\partial_{x} V_{nn}^\epsilon)^2}{\mathcal{Z}^\epsilon}.
\end{equation*}
By the Cauchy-Schwarz inequality, the sum of the last two terms is nonpositive.
Moreover, from the regularized Poisson equation \eqref{pois_reg}
\begin{eqnarray*}
\partial_{xx} V_{nn}^\epsilon = <\partial_{xx} \mathcal{R}^{\epsilon}[V^\epsilon],g_{nn}> &=& <-\Delta_{z} \mathcal{R}^{\epsilon}[V^\epsilon],g_{nn}> - < \mathcal{R}^{\epsilon}[\rho^\epsilon], g_{nn}>  \vspace{0.5cm} \\
&\leq& \| \mathcal{R}^{\epsilon}[V^\epsilon] \|_{L^2(\omega_z)} \|\chi_n\|_{L^{\infty}(\Omega)} \|\chi_n\|_{H^2(\Omega)}.
\end{eqnarray*}
To obtain the last inequality, we remark that the second term is nonpositive and to treat the first term, we make an integration by parts and we use the fact that
\begin{equation*}
\| \Delta_z g_{nn} \|_{L^2(\omega_z)} \leq 2 \|\chi_n\|_{L^{\infty}(\Omega)} \|\chi_n\|_{H^2(\Omega)}.
\end{equation*}
Using the property (i) of Lemma \ref{lemme_reg} and (\ref{sumEn}), we conclude that
\begin{equation}\label{termxx}
\int_{\omega_x} N_s^\epsilon \partial_{xx} V_s^\epsilon dx \leq C  
\|N_s^\epsilon\|_{L^2(\omega_x)} \| V^\epsilon \|_{L^2(\Omega)}.
\end{equation}
On the other hand, we have
\begin{equation*}
N_s^\epsilon \partial_x V_s^\epsilon\,_{|\partial \omega_x} = 
N_b \frac{\sum_n <\partial_x \mathcal{R}^{\epsilon}[V^\epsilon],g_{nn}> 
e^{-(E_n+V_{nn}^\epsilon)}}{\sum_n e^{-(E_n+V_{nn}^\epsilon)}} \leq C N_b \int_{\omega_z} |\partial_x \mathcal{R}^{\epsilon}[V^\epsilon]_{|\partial \omega_x} | dz.
\end{equation*}
Thanks to the trace Theorem and Lemma \ref{lemme_reg}, we obtain
\begin{equation*}
N_s^{\epsilon} \partial_x V_s^{\epsilon} |_{\partial \omega_x} \leq C N_b \|V^{\epsilon}\|_{H^2(\Omega)}\leq C N_b \|N_s^{\epsilon}\|_{L^2(\omega_x)},
\end{equation*}
where we use the elliptic regularity and Lemma \ref{proposition_S} and 
\ref{lemme_reg} for the last inequality. With \eqref{bornH1Ns}
and \eqref{termxx}, we conclude that
\begin{equation*}
4\|\partial_x \sqrt{N_s^\epsilon}\|^2_{L^2(\omega_x)} \leq C \Big( \mathcal{D}^{\epsilon}(t) + \|N_s^\epsilon\|_{L^2(\omega_x)} \Big).
\end{equation*}
Applying the Gagliardo-Nirenberg inequality to the function $\sqrt{N_s^{\epsilon}}$ 
and using the bound on $\|N_s^\epsilon\|_{L^1(\omega_x)}$ (\ref{mass}), 
we obtain
\begin{equation*}
\forall t \in [0,T],\hspace{1.5cm} \int_{\omega_x} |\partial_x \sqrt{N_s^{\epsilon}}|^2 dx \leq C (1+ \mathcal{D}^{\epsilon}(t) ).
\end{equation*}
With (\ref{dW_dt}), we can say that $\int_0^t \mathcal{D}^{\epsilon}(s)ds \leq C(W(0) + \mathcal{N}_I t)$ for all $t\in[0,T]$ and consequently we obtain (\ref{a_priori_1}). 
Finally, (\ref{a_priori_2}) is a consequence of \eqref{a_priori_1}
with the Gagliardo-Nirenberg inequality.
\end{proof}

\subsection{Analysis of the regularized Nanowire Poisson system}
In this section, the surface density $N_s$ is assumed to be given and we only consider the resolution of the regularized Nanowire Poisson equation \eqref{pois_reg} with boundary conditions \eqref{bdyreg2}--\eqref{bdyreg3} for $\epsilon \in [0,1]$.

We introduce the functional space
\begin{equation*}
H^1_{\omega_x}=\{ V\in H^1(\Omega),\ \mbox{ s. t. } 
\forall x\in\partial\omega_x,\ \forall z\in\omega_z, \ V(x,z)=0\}.
\end{equation*}
Let us also take $V_0\in H^1(\Omega)$ such that $V_0=V_b$ on $\partial\omega_x\times \omega_z$ and $\partial_z V_0(x,z)=0$ for all $z\in\partial\omega_z$. A possibility is to take $V_0=\underline{V}$.
Most of the results presented here can be obtained by a straightforward 
adaptation of \cite{ddsp,ddsplog}. Thus we will not detail the proofs.
We first state the following existence result~:

\begin{proposition}\label{exist_poisson_nano}
Let $T>0$ and $\epsilon \in [0,1]$. We assume $N_s \in L^\infty([0,T];L^1(\omega_x))$ such that $N_s \geq 0$ a.e. 
Then the regularized Nanowire Poisson equation
\eqref{pois_reg} with boundary conditions \eqref{bdyreg2}--\eqref{bdyreg3} 
admits a unique solution $V^{\epsilon} \in V_0+H^1_{\omega_x}$ with a bound independent of $\epsilon$.
\end{proposition}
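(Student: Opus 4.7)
The approach is variational, adapting the strategy of \cite{ddsp,ddsplog}. On the affine space $V_0 + H^1_{\omega_x}$ I would introduce the energy functional
\begin{equation*}
J^\epsilon(V) = \frac{1}{2}\int_\Omega |\nabla V|^2\,dx\,dz + \int_{\omega_x} N_s(x)\,\log \mathcal{Z}^\epsilon[V](x)\,dx,
\end{equation*}
and compute $\delta J^\epsilon$. Using that $\partial \log\mathcal{Z}^\epsilon/\partial V_{nn}^\epsilon = -e^{-(E_n+V_{nn}^\epsilon)}/\mathcal{Z}^\epsilon$ together with the selfadjointness of $\mathcal{R}^\epsilon$ (Lemma \ref{lemme_reg}(ii)) to transfer $\mathcal{R}^\epsilon$ from the test direction onto $N_s\mathcal{S}^\epsilon[V]$, the Euler-Lagrange equation of $J^\epsilon$ is precisely the weak form of \eqref{pois_reg} with Dirichlet data \eqref{bdyreg2}, while \eqref{bdyreg3} emerges as the natural Neumann condition on $\omega_x\times\partial\omega_z$. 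A weak solution is therefore exactly a critical point of $J^\epsilon$.

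Strict convexity of $J^\epsilon$ is immediate: the gradient term is strictly convex on $V_0+H^1_{\omega_x}$, and $V\mapsto \log\mathcal{Z}^\epsilon[V] = \log\sum_n e^{-E_n-\langle\mathcal{R}^\epsilon V,g_{nn}\rangle}$ is convex as a log-sum-exp of affine functions of $V$ (integrated against $N_s\geq 0$). The crucial structural point for well-definedness and coercivity is the algebraic fact that $g_{nn}\geq 0$ with $\int_{\omega_z} g_{nn}\,dz = 1$, which yields the $n$-uniform pointwise bound $|V_{nn}^\epsilon(x)|\leq \|\mathcal{R}^\epsilon V(x,\cdot)\|_{L^\infty(\omega_z)}$ and hence, together with \eqref{sumEn}, gives $|\log\mathcal{Z}^\epsilon[V](x)| \leq C+\|\mathcal{R}^\epsilon V(x,\cdot)\|_{L^\infty(\omega_z)}$. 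To control this against $N_s\in L^1(\omega_x)$ with $\epsilon$-uniform constants, I would apply Cauchy-Schwarz in $z$, Minkowski's inequality $\|\cdot\|_{L^\infty_xL^2_z}\leq\|\cdot\|_{L^2_zL^\infty_x}$, then the Sobolev embedding $H^1(\Omega)\hookrightarrow L^2_zL^\infty_x$ recalled in Section \ref{spectrum}, together with the $\epsilon$-uniform operator bound $\|\mathcal{R}^\epsilon\|\leq 1$ of Lemma \ref{lemme_reg}(i), to get $\int_{\omega_x} N_s|V_{nn}^\epsilon|\,dx\leq C\|N_s\|_{L^1}\|V\|_{H^1}$ with $C$ independent of $\epsilon$ and $n$. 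This produces $J^\epsilon(V)\geq \tfrac12\|\nabla V\|_{L^2}^2 - C(1+\|V\|_{H^1})$, and Poincaré on $H^1_{\omega_x}$ closes coercivity. Weak lower semicontinuity of the nonlinear term follows from Rellich compactness (strong $L^2$ convergence of $\mathcal{R}^\epsilon V^n$) combined with dominated convergence on the exponential series.

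The direct method of the calculus of variations then yields a unique minimizer $V^\epsilon\in V_0 + H^1_{\omega_x}$, which is the sought weak solution. The $\epsilon$-independent $H^1$ bound on $V^\epsilon$ follows immediately from the comparison $J^\epsilon(V^\epsilon)\leq J^\epsilon(V_0)$ combined with the same two-sided control of $\int N_s\log\mathcal{Z}^\epsilon$ used for coercivity. The main obstacle I anticipate is precisely the subtle interplay between the fast growth $\|g_{nn}\|_{L^\infty}\leq CE_n^2$ of Lemma \ref{lemme_spec} and the exponential decay $e^{-E_n}$ of \eqref{sumEn}: without the normalization $g_{nn}\geq 0$, $\int g_{nn}\,dz =1$, one could not dominate the series in $\log\mathcal{Z}^\epsilon$ and $\mathcal{S}^\epsilon$ uniformly in $\epsilon$, which is the single place where the specific structure of the Bloch decomposition enters in an essential way.
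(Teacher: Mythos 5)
Your overall strategy is exactly the paper's: minimize the convex functional $J(V)=\frac12\int_\Omega|\nabla V|^2\,dxdz+\int_{\omega_x}N_s\log\mathcal{Z}^\epsilon[V]\,dx$ on $V_0+H^1_{\omega_x}$, identify critical points with weak solutions of \eqref{pois_reg} via the selfadjointness of $\mathcal{R}^\epsilon$, and get the $\epsilon$-independent $H^1$ bound from the minimization property with a constant depending only on $\|N_s\|_{L^1}$. However, the quantitative step you present as ``the crucial structural point'' does not hold as stated. The bound $|V_{nn}^\epsilon(x)|\le\|\mathcal{R}^\epsilon[V](x,\cdot)\|_{L^\infty(\omega_z)}$ is true, but that right-hand side is not controlled by $\|V\|_{H^1(\Omega)}$: the embedding recalled in Section \ref{spectrum} is $H^1(\Omega)\hookrightarrow L^2_zL^\infty_x$, i.e.\ $L^\infty$ in the one-dimensional $x$ variable only, and $H^1$ of the three-dimensional $\Omega$ gives no $L^\infty_z$ control of slices; the mollification does make it finite for $\epsilon>0$, but with a constant blowing up as $\epsilon\to0$, while the statement includes $\epsilon=0$. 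Your fallback chain (Cauchy--Schwarz in $z$, Minkowski, Sobolev, $\|\mathcal{R}^\epsilon\|\le1$) is correct, but it produces $|V_{nn}^\epsilon(x)|\le C\|g_{nn}\|_{L^2(\omega_z)}\|V\|_{H^1(\Omega)}$ with $\|g_{nn}\|_{L^2(\omega_z)}\lesssim E_n+\|W_{\calL}\|_{L^\infty}$ by \eqref{estimate_g_nn}, so the constant is \emph{not} independent of $n$, contrary to what you claim; consequently the asserted two-sided pointwise bound $|\log\mathcal{Z}^\epsilon[V]|\le C+\|\mathcal{R}^\epsilon[V](x,\cdot)\|_{L^\infty(\omega_z)}$, uniform in $\epsilon$, is not established (and indeed $\mathcal{Z}[V]$ can be $+\infty$ for sign-changing $V\in H^1$ when $\epsilon=0$).

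The gap is repairable, and the repair is what the paper actually does (``using Lemma \ref{lemme_spec}''). For coercivity you only need a lower bound on $J_1$, since $N_s\ge0$: $\log\mathcal{Z}^\epsilon[V]\ge -E_1-|V_{11}^\epsilon|\ge -E_1-C\|V\|_{H^1(\Omega)}$ by \eqref{estimate_v_nn} with $n=1$ and Lemma \ref{lemme_reg}(i), which gives $J(V)\ge\frac12\|\nabla V\|_{L^2}^2-C\|N_s\|_{L^1}(1+\|V\|_{H^1})$ uniformly in $\epsilon$; no $n$-uniform bound and no upper bound on $\log\mathcal{Z}^\epsilon$ are needed, because the convex functional is allowed to take the value $+\infty$ (properness is checked at $V_0$, and weak lower semicontinuity follows from convexity plus Fatou rather than dominated convergence, which you would have trouble justifying along an arbitrary minimizing sequence). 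Where the series genuinely must be summed --- e.g.\ the boundedness of $\mathcal{S}^\epsilon$ entering the Euler--Lagrange identification --- the paper's mechanism is Lemma \ref{proposition_S}: $\|g_{nn}\|_{L^\infty}\lesssim(E_n+\|W_{\calL}\|_{L^\infty})^2$ played against $\sum_nE_n^2e^{-E_n}<\infty$ from \eqref{sumEn}, together with a sign condition on the potential, not the normalization $g_{nn}\ge0$, $\int_{\omega_z}g_{nn}\,dz=1$ that you single out. With these substitutions the remainder of your argument (strict convexity of the Dirichlet term plus Poincar\'e on $H^1_{\omega_x}$ for uniqueness, direct method for existence, comparison with $J(V_0)$ for the $\epsilon$-independent bound) coincides with the paper's proof.
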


\begin{proof}
Using the selfadjointness of the regularization operator, a weak solution 
of (\ref{pois_reg}) is a critical point in the space $V_0+H^1_{\omega_x}$
of the functional
\begin{eqnarray}
\lefteqn{J(V,N_s)}\hspace{1.5cm}
&=& J_0(V)+ J_1(V,N_s)\nonumber\\
&=& \frac{1}{2}\int_{\Omega} |\nabla V|^2dxdz + \int_{\omega_x} N_s \ln \Big( \sum_{n=1}^{+\infty} e^{-\big(E_n+<\mathcal{R}^\epsilon[V],g_{nn}>\big)} \Big) dx.
\end{eqnarray}
Following the proof of Proposition 3.1 in \cite{ddsplog} and using 
Lemma \ref{lemme_spec}, 
we show that $J$ is a continuous, convex and coercive functional on $V_0+H^1_{\omega_x}$.
Thus $J$ admits a unique minimizer $V$ and we have a bound on $V$ in $H^1$
only depending on the $L^1$ norm of $N_s$.
\end{proof}

Then, we have the following continuity result~:
\begin{proposition}\label{cont}
Let $T>0$ and $\epsilon \in [0,1]$. Assume $N_s$ and $\widetilde{N}_s$ are given in $L^{\infty}([0,T];L^1(\omega_x))$ such that $N_s \geq 0$ and $\widetilde{N}_s \geq 0$ a.e. 
Then, the corresponding solutions $V^\epsilon$ and $\widetilde{V}^\epsilon$ of the regularized Nanowire Poisson equation
\eqref{pois_reg} with boundary conditions \eqref{bdyreg2}--\eqref{bdyreg3} verify
\begin{equation}\label{continuity}
\forall t\in[0,T], \hspace{0.5cm} \|V^\epsilon - \widetilde{V}^\epsilon \|_{H^1(\Omega)} \leq C \|N_s -\widetilde{N}_s \|_{L^1(\omega_x)}.
\end{equation}
Moreover, if $N_s$ and $\widetilde{N}_s$ belongs to $L^{\infty}([0,T];L^2(\omega_x))$, we have
\begin{equation}\label{h2_estimate}
\forall t\in[0,T], \hspace{0.5cm} \|V^\epsilon - \widetilde{V}^\epsilon \|_{H^2(\Omega)} \leq C \|N_s -\widetilde{N}_s \|_{L^2(\omega_x)},
\end{equation}
where $C$ stands for a nonnegative constant not depending on $\epsilon$.
\end{proposition}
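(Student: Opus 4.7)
The plan is to subtract the two regularized Poisson equations \eqref{pois_reg}, test against $V^\epsilon-\widetilde{V}^\epsilon$, which lies in $H^1_{\omega_x}$ since it has zero boundary values on $\partial\omega_x\times\omega_z$, and to exploit the monotonicity of $V\mapsto-\mathcal{R}^\epsilon[N_s\mathcal{S}^\epsilon[V]]$ to discard the otherwise uncontrollable diagonal contribution. After the decomposition
\begin{equation*}
-\Delta(V^\epsilon-\widetilde{V}^\epsilon)=\mathcal{R}^\epsilon\bigl[N_s(\mathcal{S}^\epsilon[V^\epsilon]-\mathcal{S}^\epsilon[\widetilde{V}^\epsilon])\bigr]+\mathcal{R}^\epsilon\bigl[(N_s-\widetilde{N}_s)\mathcal{S}^\epsilon[\widetilde{V}^\epsilon]\bigr],
\end{equation*}
testing against $V^\epsilon-\widetilde{V}^\epsilon$ yields $\|\nabla(V^\epsilon-\widetilde{V}^\epsilon)\|_{L^2(\Omega)}^2=I_1+I_2$, the boundary contributions disappearing thanks to \eqref{bdyreg2}--\eqref{bdyreg3}.

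For $I_1$, I would point out that the functional $V\mapsto J_1(V,N_s):=\int_{\omega_x}N_s\log\mathcal{Z}^\epsilon[V]\,dx$ already used in Proposition \ref{exist_poisson_nano} is convex (log-sum-exp composed with affine maps, integrated against the nonnegative weight $N_s$), and that thanks to the self-adjointness of $\mathcal{R}^\epsilon$ (Lemma \ref{lemme_reg}(ii)) its G\^ateaux derivative is precisely $-\mathcal{R}^\epsilon[N_s\mathcal{S}^\epsilon[V]]$. Monotonicity of the gradient of a convex functional then forces $I_1\leq 0$. For $I_2$, I would use self-adjointness of $\mathcal{R}^\epsilon$ once more, the uniform bound on $\mathcal{S}^\epsilon[\widetilde{V}^\epsilon]$ from Lemma \ref{proposition_S}(i), Cauchy--Schwarz in $z$, and the fact that the embedding $H^1(\Omega)\hookrightarrow L^2_zL^\infty_x(\Omega)$ recalled in Section \ref{spectrum} implies the pointwise-in-$x$ bound $\sup_x\|u(x,\cdot)\|_{L^2(\omega_z)}\leq C\|u\|_{H^1(\Omega)}$, applied to $u=\mathcal{R}^\epsilon[V^\epsilon-\widetilde{V}^\epsilon]$ together with the $L^p$-contractivity from Lemma \ref{lemme_reg}(i). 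This gives $I_2\leq C\|N_s-\widetilde{N}_s\|_{L^1(\omega_x)}\|V^\epsilon-\widetilde{V}^\epsilon\|_{H^1(\Omega)}$, and Poincar\'e's inequality on $H^1_{\omega_x}$ closes \eqref{continuity}.

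For the $H^2$ estimate I would invoke standard elliptic regularity for the Poisson problem with the mixed Dirichlet--Neumann boundary conditions (using the smoothness of $V_b$ from Assumption \ref{assumption_bord}) combined with the $L^2$-contractivity of $\mathcal{R}^\epsilon$, giving
\begin{equation*}
\|V^\epsilon-\widetilde{V}^\epsilon\|_{H^2(\Omega)}\leq C\bigl\|N_s\mathcal{S}^\epsilon[V^\epsilon]-\widetilde{N}_s\mathcal{S}^\epsilon[\widetilde{V}^\epsilon]\bigr\|_{L^2(\Omega)}.
\end{equation*}
Splitting as before, the ``source'' piece is directly controlled by $C\|N_s-\widetilde{N}_s\|_{L^2(\omega_x)}$ thanks to $\|\mathcal{S}^\epsilon[\widetilde{V}^\epsilon]\|_{L^\infty}\leq C$, while the ``Lipschitz'' piece is bounded by the pointwise-in-$x$ version of Lemma \ref{proposition_S}(ii) and the same $H^1\hookrightarrow L^2_zL^\infty_x$ embedding by $C\|N_s\|_{L^2(\omega_x)}\|V^\epsilon-\widetilde{V}^\epsilon\|_{H^1(\Omega)}$. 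Feeding in \eqref{continuity} together with $\|N_s-\widetilde{N}_s\|_{L^1}\leq C\|N_s-\widetilde{N}_s\|_{L^2}$ on the bounded interval $\omega_x$ then produces \eqref{h2_estimate}.

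The main obstacle is the $I_1\leq 0$ step: any direct attempt to bound $N_s(\mathcal{S}^\epsilon[V^\epsilon]-\mathcal{S}^\epsilon[\widetilde{V}^\epsilon])$ using only $L^1$ information on $N_s$ and the $L^2$-Lipschitz estimate of Lemma \ref{proposition_S}(ii) leads back to a quadratic term $\|V^\epsilon-\widetilde{V}^\epsilon\|_{H^1}^2$ whose coefficient cannot be absorbed. Recognizing this contribution as the monotone part inherited from the variational structure behind Proposition \ref{exist_poisson_nano} is the critical observation; once $I_1$ has been dropped, the remaining estimates are routine manipulations with the tools already assembled in Section \ref{spectrum} and in Lemma \ref{lemme_reg}.
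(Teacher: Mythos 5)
Your proposal is correct and follows essentially the same path as the paper's proof: subtract the two regularized Poisson equations, test with $V^\epsilon-\widetilde{V}^\epsilon$, drop the sign-definite term coming from the $\mathcal{S}^\epsilon$-difference, bound the remaining term with Lemma \ref{proposition_S}(i), the embedding $H^1(\Omega)\hookrightarrow L^2_zL^\infty_x(\Omega)$, Lemma \ref{lemme_reg}(i) and Poincar\'e, and then obtain \eqref{h2_estimate} from elliptic regularity together with an $L^2$ bound on the right-hand side via the (pointwise-in-$x$) Lipschitz estimate of Lemma \ref{proposition_S}(ii). The only notable difference is your justification of the nonpositivity of the diagonal term through the convexity of $V\mapsto J_1(V,N_s)$ and the monotonicity of its G\^ateaux derivative, which is a more careful rendering of the paper's shorthand that ``$V\mapsto\mathcal{S}[V]$ is decreasing'': since the dependence passes through the averages $\langle\mathcal{R}^\epsilon[V],g_{nn}\rangle$, it is indeed the integrated quantity, not the pointwise product, that carries the sign, so your observation strengthens rather than deviates from the paper's argument.
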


\begin{proof}
Multiplying the regularized Poisson equation (\ref{pois_reg})
by $V^\epsilon- \widetilde{V}^\epsilon$ and integrating, we obtain
\begin{equation*}
 \int_{\Omega} |\nabla (V^\epsilon-\widetilde{V}^\epsilon)|^2 dx dz
= \int_{\Omega}  \Big( \big(N_s-\widetilde{N}_s\big) \mathcal{S}^\epsilon[V^\epsilon] +\widetilde{N}_s \big(\mathcal{S}^\epsilon[V^\epsilon]-\mathcal{S}^\epsilon[\widetilde{V}^\epsilon]\big) \Big)\mathcal{R}^\epsilon [V^\epsilon - \widetilde{V}^\epsilon ]  dx dz.
\end{equation*}
Because the functional $V\mapsto \mathcal{S}[V]$ is decreasing with respect to $V$, term
$(\mathcal{S}^\epsilon[V^\epsilon]-\mathcal{S}^\epsilon[\widetilde{V}^\epsilon])\mathcal{R}^\epsilon [V^\epsilon - \widetilde{V}^\epsilon]$ 
is nonpositive. We deduce
\begin{eqnarray}\label{eq_cont}
\int_{\Omega} |\nabla (V^\epsilon-\widetilde{V}^\epsilon)|^2 dx dz
&\leq
& \|N_s -\widetilde{N}_s\|_{L^1(\omega_x)} \| <\mathcal{R}^\epsilon [V^\epsilon - \widetilde{V}^\epsilon], \mathcal{S}^\epsilon[V^\epsilon]> \|_{L^\infty(\omega_x)}.
\end{eqnarray}
Then, (i) of Lemma \ref{proposition_S} and a Cauchy-Schwarz inequality gives
\begin{equation*}
\int_{\Omega} |\nabla (V^\epsilon-\widetilde{V}^\epsilon)|^2 dx dz \leq C \|N_s -\widetilde{N}_s\|_{L^1(\omega_x)} \|\mathcal{R}^\epsilon [V^\epsilon - \widetilde{V}^\epsilon]\|_{L^2_z L^\infty_x(\Omega)}.
\end{equation*}
We use the property (i) of Lemma \ref{lemme_reg} and the embedding $H^1 \hookrightarrow L^2_z L^\infty_x$. We obtain
\begin{equation*}\label{eq_cont2}
\int_{\Omega} |\nabla (V^\epsilon-\widetilde{V}^\epsilon)|^2 dx dz \leq C \|N_s -\widetilde{N}_s\|_{L^1(\omega_x)} \|V^\epsilon - \widetilde{V}^\epsilon\|_{H^1(\Omega)}.
\end{equation*}
Finally, thanks to the Poincar\'e inequality, we get (\ref{continuity}).

For the $H^2$ estimate, we have
\begin{equation*}
-\Delta(V^\epsilon-\widetilde{V}^\epsilon) = \rho^\epsilon - \widetilde{\rho}^\epsilon =
\mathcal{R}^\epsilon \Big[\big(N_s-\widetilde{N}_s\big)\mathcal{S}^\epsilon[V^\epsilon] + \widetilde{N}_s\big(\mathcal{S}^\epsilon[V^\epsilon]-\mathcal{S}^\epsilon[\widetilde{V}^\epsilon]\big)\Big].
\end{equation*}
Then we bound the $L^2$ norm of the right hand side as above 
using the spectral properties in Section \ref{spectrum}.
We finally get the $H^2$ estimate (\ref{h2_estimate}) from the elliptic
regularity.

\end{proof}

Finally, a straightforward adaptation of Proposition 3.2 of \cite{ddsplog}
gives the following convergence result as $\epsilon$ goes to $0$.
\begin{proposition}\label{limit_poisson}
As $\epsilon \rightarrow 0$, the solution $V^{\epsilon}$ of the regularized Nanowire Poisson system converges, uniformly with respect to $N_s \in L^{\infty}([0,T];L^1(\omega_x))$ such that $N_s \geq 0$ a.e, to the solution $V$ of the unregularized problem in $L^{\infty}([0,T];H^1(\Omega))$.
\end{proposition}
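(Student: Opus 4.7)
The plan is to adapt the argument of Proposition 3.2 in \cite{ddsplog} to the present three-dimensional nanowire setting, relying on the spectral estimates of Section \ref{spectrum}. By Proposition \ref{exist_poisson_nano}, the family $(V^{\epsilon})_{\epsilon\in[0,1]}$ is bounded in $L^{\infty}([0,T];H^1(\Omega))$ with a bound depending only on $\|N_s\|_{L^{\infty}([0,T];L^1(\omega_x))}$. Up to a subsequence, $V^{\epsilon_k}\rightharpoonup V^*$ weakly in $H^1(\Omega)$ and strongly in $L^2(\Omega)$ by Rellich compactness. Using Lemma \ref{lemme_reg}(i) together with the identity $\mathcal{S}^{\epsilon}[V^{\epsilon}]=\mathcal{S}[\mathcal{R}^{\epsilon}V^{\epsilon}]$ and the Lipschitz estimate of Lemma \ref{proposition_S}, we get $\mathcal{S}^{\epsilon_k}[V^{\epsilon_k}]\to\mathcal{S}[V^*]$ in $L^2(\Omega)$. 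Passing to the limit in the weak formulation of \eqref{pois_reg} tested against $\phi\in H^1_{\omega_x}$, after using the self-adjointness of $\mathcal{R}^{\epsilon}$ to move the regularization onto $\phi$ (where $\mathcal{R}^{\epsilon}\phi\to\phi$ in $L^2_zL^{\infty}_x(\Omega)$ by Lemma \ref{lemme_reg} and the Sobolev embedding recalled in Section \ref{spectrum}), identifies $V^*$ as a solution of the unregularized Poisson problem. The uniqueness argument of Proposition \ref{cont} does not rely on the regularization, so $V^*=V$ and the whole sequence converges.

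To upgrade this to strong $H^1$ convergence and obtain uniformity in $N_s$, we would subtract the regularized and unregularized Poisson equations and test with $V^{\epsilon}-V\in H^1_{\omega_x}$. Using the self-adjointness of $\mathcal{R}^{\epsilon}$ and adding and subtracting $\mathcal{S}[\mathcal{R}^{\epsilon}V]$ one obtains
\[
\int_{\Omega}|\nabla(V^{\epsilon}-V)|^2\,dxdz\leq \int_{\Omega}N_s\,\mathcal{S}[\mathcal{R}^{\epsilon}V]\,(\mathcal{R}^{\epsilon}-\mathrm{Id})(V^{\epsilon}-V)\,dxdz+\int_{\Omega}N_s\,(\mathcal{S}[\mathcal{R}^{\epsilon}V]-\mathcal{S}[V])(V^{\epsilon}-V)\,dxdz,
\]
the remaining cross term $\int N_s\,(\mathcal{S}[\mathcal{R}^{\epsilon}V^{\epsilon}]-\mathcal{S}[\mathcal{R}^{\epsilon}V])(\mathcal{R}^{\epsilon}V^{\epsilon}-\mathcal{R}^{\epsilon}V)$ being non-positive by the decreasing monotonicity of $V\mapsto\mathcal{S}[V]$, already exploited in the proof of Proposition \ref{cont}. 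The second residual vanishes as $\epsilon\to 0$ by Lemma \ref{proposition_S} combined with Lemma \ref{lemme_reg}(i). For the first, we transfer $\mathcal{R}^{\epsilon}-\mathrm{Id}$ onto the smoother factor by self-adjointness, so that it acts on $N_s\,\mathcal{S}[\mathcal{R}^{\epsilon}V]$ and tends to zero by Lemma \ref{lemme_reg}(i), whereupon Cauchy--Schwarz with the uniform $H^1$ bound on $V^{\epsilon}$ closes the estimate. All constants then depend only on $\|N_s\|_{L^{\infty}([0,T];L^1(\omega_x))}$, which yields the uniformity in $N_s$.

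The main technical obstacle is precisely the first residual term: since $N_s$ lies only in $L^1(\omega_x)$ and $V^{\epsilon}-V$ enjoys no better than a uniform $H^1$ bound, the quantitative rate for $\mathcal{R}^{\epsilon}\to\mathrm{Id}$ cannot be applied directly to $V^{\epsilon}-V$. The self-adjointness trick circumvents this, provided one balances the $L^1$ regularity of $N_s$ against the $L^{\infty}$ control on $\mathcal{S}[\mathcal{R}^{\epsilon}V]$ from Lemma \ref{proposition_S}(i) and pairs the result with the embedding $H^1\hookrightarrow L^2_zL^{\infty}_x(\Omega)$ from Section \ref{spectrum}. This is the three-dimensional confinement analogue of the one-dimensional manipulation of \cite{ddsplog}, which is exactly why the authors describe the adaptation as straightforward.
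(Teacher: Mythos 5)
Your overall skeleton (energy estimate for $V^\epsilon-V$, discarding the monotone term, and controlling the two residuals produced by $\mathcal{R}^\epsilon-\mathrm{Id}$ and by $\mathcal{S}[\mathcal{R}^\epsilon V]-\mathcal{S}[V]$) is indeed the natural adaptation the paper has in mind when it invokes Proposition 3.2 of \cite{ddsplog}, and it is consistent with the way Proposition \ref{cont} is proved. But the way you close the two residual estimates has a genuine gap, and it sits exactly where the content of the proposition lies: the \emph{uniformity in $N_s$}. First, the ``self-adjointness trick'' for the first residual does not work as stated: $N_s\,\mathcal{S}[\mathcal{R}^\epsilon V]$ is not the smoother factor, it is only $L^1$ in $x$ (Lemma \ref{proposition_S} gives an $L^\infty$ bound on $\mathcal{S}$, but $N_s$ is merely $L^1(\omega_x)$), so $(\mathcal{R}^\epsilon-\mathrm{Id})[N_s\mathcal{S}[\mathcal{R}^\epsilon V]]$ is not in $L^2(\Omega)$ and the announced Cauchy--Schwarz pairing against $V^\epsilon-V$ is unavailable; the admissible pairing, as in the proof of Proposition \ref{cont}, is $\|N_s\|_{L^1(\omega_x)}$ against an $L^\infty_x$-in-$x$ quantity obtained through the pointwise-in-$x$ estimates on $\mathcal{S}$ and the embedding $H^1(\Omega)\hookrightarrow L^2_zL^\infty_x(\Omega)$. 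Second, and more fundamentally, every smallness in your argument comes from Lemma \ref{lemme_reg}(i), i.e.\ from $\mathcal{R}^\epsilon[\mathcal{V}]\to\mathcal{V}$ for a \emph{fixed} function $\mathcal{V}$. Here the functions being mollified are not fixed: $V=V[N_s]$ depends on $N_s$, $V^\epsilon-V$ depends on $\epsilon$, and $N_s\,\mathcal{S}[\mathcal{R}^\epsilon V]$ depends on both. Mollification does not converge uniformly on bounded subsets of $L^1$ (concentrating profiles give $\|\mathcal{R}^\epsilon N_s-N_s\|_{L^1}$ of order $\|N_s\|_{L^1}$ for any fixed $\epsilon$), so invoking Lemma \ref{lemme_reg}(i) cannot yield constants, or rather smallness, depending only on $\|N_s\|_{L^\infty([0,T];L^1(\omega_x))}$. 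As written, your argument proves convergence for each fixed $N_s$ (and the qualitative subsequence step in your first paragraph does the same), but not the uniform-in-$N_s$ convergence asserted in the proposition, which is also what carries the $L^\infty$-in-time statement since the Poisson problem is solved with $t$ as a parameter.

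To repair this you need a quantitative ingredient that is uniform over the relevant family: either a mollification rate on $H^1$-bounded sets, e.g.\ a translation-modulus estimate of the type $\|(\mathcal{R}^\epsilon-\mathrm{Id})W\|\leq C\,\epsilon^{\theta}\|W\|_{H^1(\Omega)}$ in the norms actually paired with $\|N_s\|_{L^1}$ (taking care of the boundary effect of the extension by zero, since $V^\epsilon-V$ does not vanish on $\omega_x\times\partial\omega_z$), applied to $V^\epsilon-V$ and to $V$, whose $H^1$ bounds depend only on $\|N_s\|_{L^1}$ by Proposition \ref{exist_poisson_nano}; or a compactness argument showing that the solution set $\{V[N_s]\}$ for $N_s\geq0$ bounded in $L^1$ is precompact in the norm in which $\mathcal{R}^\epsilon\to\mathrm{Id}$ is used, so that the convergence is uniform on it. The paper itself gives no details (it only cites \cite{ddsplog}), but some such uniform quantitative step is indispensable and is missing from your proposal. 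A minor point: uniqueness of the limit problem follows from the strict convexity of the functional in Proposition \ref{exist_poisson_nano} (or from the $\epsilon=0$ case of the estimate \eqref{continuity}), not from a separate ``uniqueness argument of Proposition \ref{cont}''.
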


\subsection{Existence of solutions for the regularized system}

\begin{proposition}\label{existence_reg}
Let $T>0$ and $\epsilon \in [0,1]$ be fixed. Then the regularized problem NDDP$_{\epsilon}$ admits a unique solution $(N^\epsilon_s, V^\epsilon)$ with $N^\epsilon_s \in C([0,T];L^2(\omega_x))\cap L^2([0,T];H^1(\omega_x))$ and $V^{\epsilon}\in L^{\infty}([0,T];H^1(\Omega))$.
\end{proposition}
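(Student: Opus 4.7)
My plan is to construct the solution by a Banach fixed-point argument on the drift-diffusion equation, treating $\epsilon$ as a fixed smoothing parameter. Given the regularization operator $\mathcal{R}^\epsilon$, all the couplings between $N_s^\epsilon$ and $V^\epsilon$ are smoothed out in $x$, so the effective drift $\partial_x V_s^\epsilon$ will be as regular as we want; this is what makes the problem tractable for fixed $\epsilon$.

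The map I would iterate is $\Phi:\tilde N_s\mapsto N_s^\epsilon$ defined on a closed ball in $L^\infty([0,T'];L^2(\omega_x))\cap\{N\geq 0\}$ for some $T'\leq T$ to be fixed small. Given $\tilde N_s$, Proposition \ref{exist_poisson_nano} provides a unique $\tilde V^\epsilon\in V_0+H^1_{\omega_x}$, and the elliptic regularity argument of Proposition \ref{cont} (applied to the difference $\tilde V^\epsilon-V_0$) together with the boundedness of $\mathcal{S}^\epsilon$ from Lemma \ref{proposition_S} actually delivers a uniform bound in $L^\infty([0,T'];H^2(\Omega))$. Through the definition \eqref{vs_reg} of $\tilde V_s^\epsilon$ and the smoothing of $\mathcal{R}^\epsilon$ (Lemma \ref{lemme_reg} applied to derivatives), $\partial_x \tilde V_s^\epsilon$ is bounded in $L^\infty([0,T']\times\omega_x)$. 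Then $\Phi(\tilde N_s)$ is defined as the unique weak solution of the linear parabolic problem
\begin{equation*}
\partial_t N_s^\epsilon - \partial_x\bigl(D\,\partial_x N_s^\epsilon\bigr) - \partial_x\bigl(D\,N_s^\epsilon\,\partial_x\tilde V_s^\epsilon\bigr)=0,
\end{equation*}
with boundary data \eqref{bdyreg1} and initial datum $N_s^{\epsilon,0}$, which exists and lies in $C([0,T'];L^2(\omega_x))\cap L^2([0,T'];H^1(\omega_x))$ by standard parabolic theory (Lions--Magenes type), and stays nonnegative by the maximum principle since $N_s^{\epsilon,0}\geq 0$ and $N_b>0$.

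The contraction property is where the $H^2$ estimate \eqref{h2_estimate} of Proposition \ref{cont} is essential. Taking two inputs $\tilde N_s^{(1)},\tilde N_s^{(2)}$, the difference $\delta N=\Phi(\tilde N_s^{(1)})-\Phi(\tilde N_s^{(2)})$ satisfies a linear parabolic equation with zero initial and boundary data and a forcing controlled by $\partial_x(\tilde V_s^{\epsilon,(1)}-\tilde V_s^{\epsilon,(2)})$. A standard $L^2$ energy estimate, Cauchy--Schwarz to absorb the drift into $\|\partial_x \delta N\|_{L^2}^2$, Gronwall, and the bound $\|\tilde V^{\epsilon,(1)}-\tilde V^{\epsilon,(2)}\|_{H^2}\leq C\|\tilde N_s^{(1)}-\tilde N_s^{(2)}\|_{L^2}$ yield $\|\delta N\|_{L^\infty_t L^2_x}\leq C(\epsilon)\sqrt{T'}\,\|\tilde N_s^{(1)}-\tilde N_s^{(2)}\|_{L^\infty_t L^2_x}$, so choosing $T'$ small enough depending on $\epsilon$ gives contraction and hence a local fixed point. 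Uniqueness on $[0,T']$ follows from the same estimate.

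The main obstacle, as usual for such fixed-point arguments, is matching the norm in which one controls $V^\epsilon$ via Poisson (an $L^2$-type input from $N_s^\epsilon$, an $H^2$-type output for $V^\epsilon$) with the norm needed to close the energy estimate on the parabolic side. The $H^2$ bound of Proposition \ref{cont} is exactly what is needed so that $\partial_x V_s^\epsilon$ lies in a space where it can be treated as a bounded drift; without it one would only get an $H^1$ bound on $V^\epsilon$ and the argument would fail. Once local existence is obtained, global existence on $[0,T]$ is recovered by iterating the construction: the a priori estimates of Proposition \ref{prop_entropie1} and Corollary \ref{prop_dissipation}, which apply to any weak solution of the regularized system, yield uniform $L^1$ and $L^2$ bounds on $N_s^\epsilon$ on $[0,T]$, so the local solution cannot blow up in the $L^2$ norm and can be extended step by step up to time $T$. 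Global uniqueness then follows by the same Gronwall-type estimate on the difference of two solutions.
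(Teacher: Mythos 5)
Your proposal is correct and follows essentially the same route as the paper: a contraction fixed-point argument on the map sending a given surface density to the solution of the linear drift-diffusion equation whose drift comes from the regularized Poisson solve (the paper iterates exactly this map $F$ on the ball $M_{a,T}$ for the norm \eqref{norme_T} for small time, citing Theorem 1.2 of the reference \cite{ddsp}), followed by continuation up to time $T$ using the a priori estimates. The only caveat is that Corollary \ref{prop_dissipation} gives time-integrated rather than uniform-in-time $L^2$ bounds, so for the continuation step it is cleaner to observe that, for fixed $\epsilon$, the smoothing gives $\|\partial_x V_s^\epsilon\|_{L^\infty}\leq C(\epsilon)\|N_s^\epsilon\|_{L^1(\omega_x)}$, and the mass bound \eqref{mass} together with a Gronwall argument on the $L^2$ energy estimate then prevents blow-up of $\|N_s^\epsilon(t)\|_{L^2(\omega_x)}$ on $[0,T]$ --- in the same spirit as, but slightly more explicit than, your appeal to the entropy estimates.
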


\begin{proof} The proof of this result follows closely the proof 
of Theorem 1.2 of \cite{ddsp}. Thus we will not detail it and only give
the main steps.
The proof relies on a fixed point argument on the map $F$ defined by :\\
\textbf{Step 1 :} For a given $N_s^\epsilon\geq 0$, we solve the regularized 
Nanowire Poisson equation \eqref{pois_reg} with boundary conditions
\eqref{bdyreg2}--\eqref{bdyreg3} and we define 
$V_s^\epsilon$ by (\ref{vs_reg}) which belongs to $L^{\infty}([0,T];H^1(\omega_x))$.\\
\textbf{Step 2 :} The effective potential $V_s^\epsilon$ being known, we solve 
the following drift-diffusion equation for the unknown $\widehat{N}^\epsilon_s$ 
\begin{equation*}
\partial_t \widehat{N}^\epsilon_s -\partial_x\Big(D(\partial_x \widehat{N}^\epsilon_s + \widehat{N}^\epsilon_s \partial_x V^\epsilon_s) \Big)=0,
\end{equation*}
with the boundary condition $\widehat{N}_s^\epsilon|_{\partial_{\omega_x}}=N_b$ and the initial value $\widehat{N}_s^\epsilon(0,x)=N_s^0(x)$. 
The map $F$ is then defined after these two steps by $F(N_s^\epsilon)=\widehat{N}_s^\epsilon$.

Then we can prove that for $T$ small enough,
$F$ is a contraction on the space $M_{a,T}$ defined by
$M_{a,T}=\{n, \|n\|_T \leq a\}$ where the norm is 
\begin{equation}\label{norme_T}
\|n\|_T=\Big[ \max_{0\leq t \leq T} \|n(t)\|^2_{L^2(\omega_x)} + \int_0^T \|n(t)\|^2_{H^1(\omega_x)}dt\Big]^{1/2}.
\end{equation}
We have then constructed a unique solution on a small time interval $[0,T_0]$.
Using the a priori estimate, we can iterate this procedure 
to construct a solution on $[T_0,2 T_0]$ that extend the previous one.
We iterate this construction until covering the interval $[0,T]$.
\end{proof}

\subsection{Passing to the limit $\epsilon \rightarrow 0$}

We construct a solution of the non-regularized Nanowire drift-diffusion
Poisson system by passing to the limit $\epsilon\to 0$ in the regularization.
First, we recall a statement of an Aubin-Lions lemma \cite{aubin,lions}~:
\begin{lemma}\label{lemme_aubin}
Take $T>0$, $q\in(1,+\infty)$ and let $(f_n)_{n\in \mathbb{N}}$ be a bounded sequence of functions in
$L^q([0,T];H)$ where $H$ is a Banach space. 
If $(f_n)_{n\in \mathbb{N}}$ is bounded in $L^q([0,T];V)$ where $V$ is compactly embedded in $H$ and $\partial f_n/\partial t$ is bounded in $L^q([0,T];V')$ uniformly with respect to $n\in \mathbb{N}$, then, $(f_n)_{n\in \mathbb{N}}$ is relatively compact in $L^q([0,T];H)$.
\end{lemma}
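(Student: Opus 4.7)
The plan is to combine two compactness ingredients — the compact embedding $V\hookrightarrow H$ (giving regularity in the spatial variable) and the bound on $\partial_t f_n$ in $L^q([0,T];V')$ (giving equicontinuity in time in a weaker norm) — and bridge them via an Ehrling-type interpolation inequality. The overall strategy is to verify a Banach-valued Fr\'echet--Kolmogorov compactness criterion in $L^q([0,T];H)$.

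First, I would establish the Ehrling-type estimate: assuming the Gelfand triple setting $V\hookrightarrow H\hookrightarrow V'$ implicit in the hypothesis, for every $\eta>0$ there exists $C_\eta>0$ such that
\[
\|u\|_H \leq \eta \|u\|_V + C_\eta \|u\|_{V'},\qquad \forall\, u\in V.
\]
The proof is a short contradiction argument: if the inequality failed, there would exist $\eta_0>0$ and a sequence $u_k\in V$ with $\|u_k\|_H=1$, $\|u_k\|_V$ bounded and $\|u_k\|_{V'}\to 0$; the compactness of $V\hookrightarrow H$ extracts a subsequence converging in $H$ to some $u$ with $\|u\|_H=1$, while the $V'$-convergence forces $u=0$, a contradiction. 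Note that this is the only place where the compactness of the embedding $V\hookrightarrow H$ is used in an essential way.

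Next, to obtain the crucial time-translation estimate I would use the fundamental theorem of calculus: for $h>0$ and $t\in[0,T-h]$,
\[
f_n(t+h)-f_n(t)=\int_t^{t+h}\partial_s f_n(s)\,ds,
\]
and H\"older's inequality would give
\[
\|f_n(t+h)-f_n(t)\|_{V'}\leq h^{(q-1)/q}\Bigl(\int_t^{t+h}\|\partial_s f_n(s)\|_{V'}^q\,ds\Bigr)^{1/q}.
\]
Applying the Ehrling inequality pointwise in $t$, raising to the $q$-th power and integrating over $(0,T-h)$, I would obtain
\[
\|f_n(\cdot+h)-f_n\|_{L^q(0,T-h;H)}\leq C\,\eta + C_\eta\, h^{(q-1)/q},
\]
uniformly in $n$, where $C$ depends only on the $L^q([0,T];V)$ bound of $f_n$ and the $L^q([0,T];V')$ bound of $\partial_t f_n$. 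Choosing $\eta$ arbitrarily small and then $h$ sufficiently small makes the right-hand side arbitrarily small. This equicontinuity in time, combined with pointwise (in $t$) relative compactness of $\{f_n(t)\}$ in $H$ coming from the $L^q([0,T];V)$ bound and the compact embedding $V\hookrightarrow H$, yields relative compactness of $(f_n)$ in $L^q([0,T];H)$ via a Banach-valued version of the Fr\'echet--Kolmogorov theorem (proved by the usual mollification/truncation argument on the time variable).

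The main obstacle is the mismatch between the space in which time regularity is controlled (the weak norm $V'$) and the target space $H$ in which strong compactness is sought; Ehrling's inequality supplies the only bridge, and it is valid precisely because $V$ is compactly embedded in $H$. A secondary technical point is that the standard scalar-valued Fr\'echet--Kolmogorov theorem needs to be adapted to Banach-space-valued functions, but this extension is by now classical and follows the same mollification scheme as in the scalar case.
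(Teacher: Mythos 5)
The paper never proves this lemma itself: it is recalled as a classical statement with citations to Aubin and Lions, so there is no internal argument to compare with. Your proof is the standard Lions--Simon route (Ehrling interpolation $\|u\|_H\leq \eta\|u\|_V+C_\eta\|u\|_{V'}$, a time-translation estimate from the bound on $\partial_t f_n$ in $L^q([0,T];V')$, and a Banach-valued Fr\'echet--Kolmogorov criterion), and it is essentially correct; this is precisely how the cited references establish the result. Two points should be tightened. First, the Ehrling inequality requires not only the compact embedding $V\hookrightarrow H$ but also a continuous injective embedding $H\hookrightarrow V'$; as the lemma is stated, $H$ is merely a Banach space and $V'$ the dual of $V$, so the Gelfand-triple structure you call ``implicit'' must be made an explicit hypothesis (it is available in the paper's application with $V=W^{1,1}(\omega_x)$, $H=L^1(\omega_x)$, $V'=W^{-1,1}(\omega_x)$). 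Second, your appeal to ``pointwise (in $t$) relative compactness of $\{f_n(t)\}$ in $H$'' is not literally justified: a bound in $L^q([0,T];V)$ gives no uniform control of $f_n(t)$ in $V$ for a fixed $t$. The correct spatial hypothesis in the vector-valued Fr\'echet--Kolmogorov (Simon) criterion is relative compactness in $H$ of the time averages $\int_{t_1}^{t_2} f_n(t)\,dt$, which does follow from the $L^q(V)$ bound via H\"older together with the compact embedding $V\hookrightarrow H$; with that substitution, and keeping your translation estimate, the argument closes as you describe.
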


\noindent {\bf Proof of Theorem \ref{main_theorem}.} 
We fix $T>0$. From Proposition \ref{existence_reg}, there exists $N_s^{\epsilon}$ and $V^{\epsilon}$ solution of the regularized system NDDP$_{\epsilon}$ with the initial data $N_s^{\epsilon,0}$. 
The bound on $\|N_s^\epsilon\|_{L^1(\omega_x)}$ \eqref{mass} and the dissipation estimate (Corollary \ref{prop_dissipation})
furnish a bound of $\sqrt{N_s^{\epsilon}}$ in $L^{\infty}([0,T];L^2(\omega_x))$
and in $L^2([0,T];H^1(\omega_x))$. 
Thus, $N_s^{\epsilon}$ is bounded uniformly with respect to $\epsilon$ in
$L^2([0,T];W^{1,1}(\omega_x))$ (since we have the equality $\partial_x N_s^{\epsilon} =2\sqrt{N_s^{\epsilon}} \partial_x \sqrt{N_s^{\epsilon}}$).
Next, using the Cauchy-Schwarz inequality and 
Assumption \ref{assumption_D}, we obtain
\begin{equation*}
\int_0^T \Big(\int_{\omega_x} |\partial_x N^{\epsilon}_s + N^{\epsilon}_s \partial_x V^{\epsilon}_s |dx \Big)^2 dt \leq C \int_0^T \mathcal{D}^{\epsilon}(t) dt
\end{equation*}
where $\mathcal{D}^{\epsilon}$ is the entropy dissipation rate defined in (\ref{entropy_diff_rate})
which is bounded in $L^1([0,T])$ uniformly with respect to $\epsilon$. 
From the drift-diffusion equation \eqref{driduf_reg}, 
we conclude that $\partial_t N_s^{\epsilon}$ is bounded in $L^2([0,T];W^{-1,1}(\omega_x))$ uniformly with respect to $\epsilon$.
Therefore, we can apply the Aubin Lemma \ref{lemme_aubin} for $q=2$, $H=L^1(\omega_x)$ and $V=W^{1,1}(\omega_x)$. 
There exists a subsequence (that we still denote abusively $N_s^{\epsilon}$) such that $N_s^{\epsilon}\rightarrow N_s$ strongly in $L^2([0,T];L^1(\omega_x))$.
Finally, for this function $N_s$, we solve the unregularized Nanowire Poisson system and construct $V$ such that $V\in L^{\infty}([0,T];H^1(\Omega))$ (Proposition \ref{exist_poisson_nano}) and $\lim_{\epsilon \rightarrow 0} \|V^{\epsilon} - V \|_{L^2([0,T];H^1(\Omega))} =0$ (thanks to Proposition \ref{limit_poisson}). 

The last step is to pass to the limit $\epsilon \rightarrow 0$ in the drift-diffusion equation. We have
\begin{eqnarray*}
\int_0^T \int_{\omega_x} N_s^{\epsilon} \partial_x V_s^{\epsilon} dx dt 
&\leq& C \| N_s^{\epsilon} \|_{L^1([0,T];L^2(\omega_x))} \| V^{\epsilon} \|_{L^{\infty}([0,T];H^1(\Omega))}.
\end{eqnarray*}
Corollary \ref{prop_dissipation} shows that $\| N_s^{\epsilon} \|_{L^1([0,T];L^2(\omega_x))}$ is bounded independently of $\epsilon$ and we conclude that there exists a nonnegative constant $C$ independent of $\epsilon$ such that
\begin{equation}
\int_0^T \int_{\omega_x} N_s^{\epsilon} \partial_x V_s^{\epsilon} dx dt \leq C.
\end{equation}
It gives a sense to the drift-diffusion equation when $\epsilon\rightarrow 0$. Finally, using (\ref{estimate_v_nn}), we immediately deduce that $V_{nn}^{\epsilon}\rightarrow V_{nn}$ in $L^2([0,T];H^1(\omega_x))$ and that
\begin{equation*}
\partial_x V_s^{\epsilon} = \frac{\sum_n \partial_x V_{nn}^{\epsilon} e^{-\big(E_n + V_{nn}^{\epsilon}\big)}}{\mathcal{Z}^{\epsilon}}
\end{equation*}
converges in $L^2([0,T]\times\omega_x)$. It is enough to prove that
\begin{equation*}
N_s^{\epsilon} \partial_x V_s^{\epsilon} \rightharpoonup N_s \partial_x V_s \hspace{0.5cm}\hbox{in}\hspace{0.5cm} \mathcal{D}'([0,T]\times\omega_x).
\end{equation*}
Thus, up to an extraction, $(N_s,V)$ is a solution of the NDDP system.
Moreover, by semicontinuity, we can pass in the limit in the a priori estimates
such that we still have the relative entropy estimation of 
Proposition \ref{prop_entropie1} for $(N_s,V)$.
\qed

\appendix

\begin{lemma}\label{lemme1_appendix1}
For all function $p \mapsto \gamma(p)$ polynomially increasing as well as all its derivative and for all $\phi\in\mathcal{C}^{\infty}([0,T]\times\RR)$ compactly supported, the function $ \psi=\gamma\phi$ can be taken as test function in the weak formulation (\ref{varia_B}) of the Boltzmann equation.
\end{lemma}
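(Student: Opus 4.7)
The strategy is a standard truncate-and-pass-to-the-limit argument in the momentum variable. Pick a family $\chi_R\in\mathcal{C}_c^\infty(\mathbb{R})$ with $\chi_R \equiv 1$ on $[-R,R]$, $\chi_R\equiv 0$ outside $[-2R,2R]$, and $|\chi_R'|\leq C/R$, and set $\psi_R = \gamma\chi_R\phi$. Since $\gamma\chi_R$ is smooth and compactly supported on $\mathbb{R}$ and $\phi\in\mathcal{C}^\infty_c([0,T]\times\mathbb{R})$, the function $\psi_R$ is a genuine admissible test function in (\ref{varia_B}), so the identity holds with $\psi$ replaced by $\psi_R$. The plan is then to let $R\to\infty$ and identify the limit with (\ref{varia_B}) for $\psi=\gamma\phi$.

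The key integrability ingredient comes from Lemma~\ref{lemma1diff}, which gives $f^\eta$ bounded in $X=L^\infty([0,T];L^2_{M(t)})$. Applying Cauchy--Schwarz with weight $M_n$,
\begin{equation*}
\int |f_n^\eta|\, |\gamma|\,dxdpdt \leq \|f_n^\eta\|_{L^2_tL^2_M}\, \Big(\int M_n\gamma^2 \,dxdpdt\Big)^{1/2},
\end{equation*}
where the right-hand side is finite because the Gaussian decay of $M_n$ in $p$ absorbs any polynomial growth of $\gamma$ (and likewise of $\gamma'$, which is where the hypothesis on derivatives is used) and because $\phi$ is compactly supported in $(t,x)$. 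The same bound holds with $\gamma$ replaced by $\gamma'$, $v_n\gamma$, or $\partial_x V_{nn}\,\gamma'$, using $V\in L^\infty_tH^2_{x,z}$ and Lemma~\ref{lemme_spec}. For the collision integral, Assumption~\ref{assumption_alpha} together with the normalization (\ref{norm}) yields $|\mathcal{Q}_B(f^\eta)_n|\leq \alpha_2\bigl(\mathcal{M}_n N_s^\eta + f_n^\eta\bigr)$, and integrability of $\mathcal{Q}_B(f^\eta)_n\,\psi$ follows from the same Cauchy--Schwarz argument plus the $L^\infty([0,T];L^1_x)$ bound on $N_s^\eta$ inherited from Lemma~\ref{lemma1diff}.

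With these $R$-uniform dominators in hand, passing to the limit in the $\partial_t$, $\partial_x$ and collision terms is immediate by dominated convergence, since $\chi_R\to 1$ pointwise. The only nonroutine point is the $\partial_p$ term, where $\partial_p\psi_R=(\gamma'\chi_R+\gamma\chi_R')\phi$ produces a boundary-like remainder
\begin{equation*}
\Big|\int f_n^\eta\,\partial_x V_{nn}\,\gamma\,\chi_R'\,\phi\,dxdpdt\Big|\leq C\,\|f_n^\eta\|_{L^2_tL^2_M}\Big(\int_{R\leq|p|\leq 2R} M_n\gamma^2\,dxdpdt\Big)^{1/2},
\end{equation*}
which tends to $0$ as $R\to\infty$ because $M_n\gamma^2$ is integrable and the cut-off set exits to infinity. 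This $\chi_R'$ piece is the only real obstacle: it is precisely the Gaussian decay of $M_n$ relative to the polynomial growth of $\gamma$, i.e.\ membership of $f^\eta$ in $L^2_M$, that makes it disappear. Combining the two limits yields (\ref{varia_B}) for $\psi=\gamma\phi$, which proves the lemma.
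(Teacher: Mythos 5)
Your truncate-in-$p$ argument with $\psi_R=\gamma\chi_R\phi$, the Cauchy--Schwarz estimate against the Maxwellian weight using the $L^2_{M}$ bound on $f^\eta$ from Lemma \ref{lemma1diff}, and dominated convergence as $R\to\infty$ is exactly the paper's proof (the paper uses a cutoff $\xi_R$ with $|\partial_p\xi_R|\le 1$ and the same weighted Cauchy--Schwarz bound $\int|\gamma f_n^\eta|(1+|p|)\,dp<\infty$). Your explicit handling of the collision term and of the $\chi_R'$ remainder only fills in details the paper leaves implicit, so the proposal is correct and essentially identical in approach.
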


\begin{proof}
Let $p \mapsto \xi_R(p)$ such that $\xi_R \in \mathcal{D}([-R,R])$, $0\leq \xi_R\leq 1$, $|\partial_p \xi_R|\leq1$ and $\xi_R \rightarrow 1$ a.e. when $R \rightarrow + \infty$. We set $\psi_R = \phi \gamma \xi_R$, function with which we can write the weak formulation (\ref{varia_B}). To pass to the limit $R\rightarrow +\infty$, it suffices from a Lebesgue theorem that $\gamma f_n^{\eta} \in L^1_p(\mathbb{R})$ and $p\gamma f_n^{\eta} \in L^1_p(\mathbb{R})$ as well as for $\partial_p \gamma$. However, with the Cauchy-Schwarz inequality,
\begin{equation*}
\int_{\mathbb{R}} |\gamma f_n^{\eta}|(1+|p|)dp \leq \Bigg(\int_{\mathbb{R}} (1+|p|)^2 M_n(p) \gamma^2(p) dp\Bigg)^{1/2} \Bigg(\int_{\mathbb{R}} \frac{(f_n^{\eta})^2}{M_n(p)}dp \Bigg)^{1/2} < \infty,
\end{equation*}
because $\gamma$ is polynomially increasing.
\end{proof}

\begin{lemma}\label{lemme2_appendix1}
Let $\Theta$ be defined in (\ref{def_theta}). There exist nonnegative constants $C_0$, $C_1$ and $C_2$ such that $\forall (t,x)\in[0,T]\times\RR$ :
\begin{equation}\label{eq:appendix1}
C_1(1+|p|) \leq \Big| \frac{\Theta_n}{\mathcal{M}_n}\Big| \leq C_2 (1+|p|),
\end{equation}
\begin{equation}\label{eq:appendix2}
\Big| \frac{\partial_s \Theta_n}{\mathcal{M}_n}\Big| \leq C_0 (1+|p|^2) \hspace{0.5cm}\hbox{for s=t, x and p.}
\end{equation}
\end{lemma}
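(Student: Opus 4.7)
The plan is to exploit the explicit algebraic structure of the linear BGK operator, which permits a pointwise inversion of $\mathcal{Q}_B$. Set
$$
A_n(p) := \sum_{n'\geq 1}\int_{\mathbb{R}}\alpha_{n,n'}(p,p')\,\mathcal{M}_{n'}(p')\,dp',\qquad
B_n(p) := \sum_{n'\geq 1}\int_{\mathbb{R}}\alpha_{n,n'}(p,p')\,\Theta_{n'}(p')\,dp',
$$
so that the defining relation $\mathcal{Q}_B(\Theta)_n = -(p/m_n^*)\mathcal{M}_n$ becomes the pointwise identity $\mathcal{M}_n B_n - A_n\Theta_n = -(p/m_n^*)\mathcal{M}_n$, and hence
$$
\frac{\Theta_n(p)}{\mathcal{M}_n(p)} = \frac{1}{A_n(p)}\Bigl(B_n(p) + \frac{p}{m_n^*}\Bigr).
$$
This closed-form expression is the central object I will estimate.

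By Assumption \ref{assumption_alpha} and the normalization \eqref{norm} one has $\alpha_1\leq A_n(p)\leq \alpha_2$ uniformly in $(t,x,p,n)$. To bound $B_n$, I estimate $|B_n(p)|\leq \alpha_2\sum_{n'}\int|\Theta_{n'}|\,dp'$ and apply the Cauchy--Schwarz inequality twice (together with \eqref{norm}) to obtain $\sum_{n'}\int|\Theta_{n'}|\,dp'\leq \|\Theta\|_\mathcal{M}$. The coercivity estimate (iv) of Proposition \ref{prop_Q}, applied to $\Theta\in(\ker\mathcal{Q}_B)^\perp$, then yields
$$
\alpha_1\|\Theta\|_\mathcal{M}^2 \leq -\langle\mathcal{Q}_B(\Theta),\Theta\rangle_\mathcal{M} = \sum_{n\geq 1}\int_{\mathbb{R}}\frac{p\,\Theta_n}{m_n^*}\,dp \leq \|\Theta\|_\mathcal{M}\Bigl(\sum_n\int_{\mathbb{R}}\frac{p^2\mathcal{M}_n}{(m_n^*)^2}\,dp\Bigr)^{1/2},
$$
and the last series is finite uniformly in $(t,x)$ by a direct Gaussian moment computation combined with \eqref{sumEn}. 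This produces $\|B_n\|_\infty\leq C$, from which the upper bound $|\Theta_n/\mathcal{M}_n|\leq C_2(1+|p|)$ of \eqref{eq:appendix1} is immediate. The lower bound follows for large $|p|$ from $|B_n + p/m_n^*|\geq |p|/m_n^* - C$, with $C_1\geq 0$ chosen small (possibly zero, consistent with the stated nonnegativity and with the constant-$\alpha$ case of the remark, in which $\Theta_n/\mathcal{M}_n = \tau p/m_n^*$ vanishes at $p=0$).

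For \eqref{eq:appendix2}, I differentiate the explicit formula for $\Theta_n$. The identity $\partial_p\mathcal{M}_n = -(p/m_n^*)\mathcal{M}_n$ produces an extra power of $|p|$ in $\partial_p\Theta_n$, which accounts for the $(1+|p|^2)$ growth. Derivatives in $t$ and $x$ enter $\mathcal{M}_n$ only through $V_{nn}$ and $\log\mathcal{Z}$: by Lemma \ref{lemme_spec} and the hypotheses of Theorem \ref{th_diffusive_limit} ($V\in L^\infty([0,T];H^2(\mathbb{R}\times\omega_z))$ with $\partial_t V\in L^\infty$), each $\partial_s V_{nn}$ is uniformly bounded on $[0,T]\times\mathbb{R}$, so $\partial_s\mathcal{M}_n/\mathcal{M}_n$ is bounded independently of $p$. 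The derivatives $\partial_s A_n$ and $\partial_s B_n$ are then controlled by the same coercivity argument applied to the equation satisfied by $\partial_s\Theta$, which shares the structure of \eqref{def_theta}. The main obstacle is verifying that the second-moment series $\sum_n (m_n^*)^{-2}\int p^2\mathcal{M}_n\,dp$ is finite uniformly in $(t,x)$; this rests on \eqref{sumEn} and on the fact that the effective masses are fixed once the Bloch problem is solved. Once that moment bound is secured, both \eqref{eq:appendix1} and \eqref{eq:appendix2} follow by direct computation from the pointwise formula for $\Theta_n/\mathcal{M}_n$.
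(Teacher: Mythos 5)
Your proposal is correct and follows essentially the same route as the paper: the pointwise identity $\lambda_n\Theta_n=\mathcal{M}_n B_n+\frac{p}{m_n^*}\mathcal{M}_n$ (your $A_n$ being the paper's $\lambda_n$), the bounds $\alpha_1\le A_n\le\alpha_2$ from Assumption \ref{assumption_alpha}, the Cauchy--Schwarz control of $B_n$ by $\|\Theta\|_{\mathcal{M}}$, and differentiation of the same identity for \eqref{eq:appendix2}. Your additional coercivity estimate quantifying $\|\Theta\|_{\mathcal{M}}$ uniformly in $(t,x)$, and your observation that the lower bound in \eqref{eq:appendix1} only holds with $C_1$ possibly zero, are refinements of details the paper leaves implicit, not a different argument.
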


\begin{proof}
By the definition (\ref{def_theta}), we have
\begin{equation}\label{eq_lambda_theta}
\lambda_n \Theta_n=\mathcal{Q}_B^+(\Theta)_n+\frac{p}{m_n^*}\mathcal{M}_n.
\end{equation}
where we denote $\mathcal{Q}_B^+(\Theta)_n=\mathcal{M}_n \sum_{n'\geq1} \int \alpha_{n,n'} \Theta_{n'} dp'$ and $\lambda_n=\sum_{n'\geq1} \int \alpha_{n,n'} \mathcal{M}_{n'}dp'$.
Using Assumption \ref{assumption_alpha}, we immediately find $\alpha_1\leq \lambda_n\leq \alpha_2$. 
Applying a Cauchy-Schwarz inequality,
\begin{equation*}
|\mathcal{Q}_B^+(\Theta)_n| \leq \alpha_2 \mathcal{M}_n \sum_{n'=1}^{+\infty} \int |\Theta_{n'}(p')| dp' \leq \alpha_2 \mathcal{M}_n \Bigg( \sum_{n'=1}^{+\infty} \int \frac{\big(\Theta_{n'}(p')\big)^2}{\mathcal{M}_{n'}(p')} dp' \Bigg)^{1/2}.
\end{equation*}
Since  $\Theta \in L^2_{\mathcal{M}}$, (\ref{eq:appendix1}) follows 
directly from (\ref{eq_lambda_theta}) and we differentiate it to obtain \eqref{eq:appendix2}.
\end{proof}

\section*{Acknowledgments}

This work has been partially supported by the project "QUAntum TRAnsport In Nanostructures" funded by the Agence Nationale de la Recherche (France). Authors also thank warmly Paola Pietra for fruitful discussions.



\end{document}